\theoremstyle{plain}
\newtheorem{teor}{Theorem}[section]
\newtheorem{cor}{Corollary}[section]
\newtheorem{prop}[teor]{Proposition}
\newtheorem{lemma}{Lemma}[section]
\newtheorem{remark}{Remark}[section]
  \newcommand*{\Log}{{\operatorname{Log}}}
\theoremstyle{definition}
\newtheorem{defi}[teor]{Definition}
  \newcommand*{\e}{\ensuremath{{\operatorname{e}}}}
\def\BB{{\mathcal{B}}}
\def\C{{\mathbb{C}}}
\newcommand{\Chat}{{\widehat\C}}
\def\CC{\mathcal{C}}
\def\S{\mathbb{S}}
\def\D{\mathbb{D}}
\def\Dbar{{\overline{\mathbb{D}}}}
\def\HH{\mathcal{H}}
\def\R{\mathbb{R}}
\def\S{{\mathbb{S}}}
\def\Sen{{\S^1}}
\def\Z{\mathbb{Z}}
\def\OO{\mathcal{O}}
\def\ZZ{\mathcal{Z}}
\def\chHH{{\check{H}}}
\def\chZZ{{\check{\mathcal{Z}}}}
\def\whHH{{\widehat{H}}}
\def\M{{\mathcal{M}}}
\def\chM{{\check{\M}}}
\def\Mpq{{\M_{p/q}}}
\def\Mpqr{{\M^*_{p/q}}}
\def\Mppq{{\M_{p'\!/q}}}
\def\Mppqr{{\M^*_{p'\!/q}}}
\newcommand{\QG}{{\mathcal{QG}}}
\newcommand{\QM}{{\mathcal{Q\!M}}}
\def\iln{{\kappa}}
\def\ilnz{{\kappa_0}}
\def\l0{_{\lambda_0}}
\def\i0{_{\iota_0}}
\def\la{_{\lambda}}
\newcommand*{\mapfromto}[3]{\hbox{\ensuremath{#1 : #2 \longrightarrow #3}}}
\def\al{\alpha}
\def\ga{\gamma}
\def\nf{{\widehat f}}
\def\ng{{\widehat g}}
\def\nn{_{\nu}}
\def\n0{_{\nu_0}}
\def\ov{\overline}
\def\chsi{{\check{\sigma}}}
\def\sm{\setminus}
\def\whR{{\widehat R}}
\def\whsi{{\widehat\sigma}}
\def\whV{{\widehat V}}
\def\wtal{{\widetilde \alpha}}
\def\wtA{{\widetilde A}}
\def\wtbe{{\widetilde \beta}}
\def\wtf{{\widetilde f}}
\def\wtl{{\widetilde l}}
\def\wtR{{\widetilde R}}
\def\wtw{{\widetilde w}}
\def\wtx{{\widetilde x}}
\def\wty{{\widetilde y}}
\def\wtX{{\widetilde X}}
\def\wtY{{\widetilde Y}}
\def\wtz{{\widetilde z}}
\def\whE{{\widehat E}}
\def\whBB{{\widehat\BB}}
\newcommand{\Arg}{{\operatorname{Arg}}}
\newcommand{\thmref}[1]{Theorem~\ref{#1}}
\newcommand{\corref}[1]{Corollary~\ref{#1}}
\title{On qc compatibility of satellite copies of the Mandelbrot set: II}
\author{Luna Lomonaco, Carsten Lunde Petersen}
\begin{document}
\maketitle
\begin{abstract}
The Mandelbrot set is a fractal that classifies the dynamics of complex quadratic polynomials. Despite its remarkably simple definition: $\M:=\{c \in \C\,|\,Q_c(0)^n \nrightarrow \infty \mbox{ as } n\rightarrow \infty, \mbox{ where } Q_c(z)=z^2+c\}$, 
it is a central object in complex dynamics, and it has been charming and intriguing since it has first been defined and drawn. A fascinating fact is the presence of little copies of the Mandelbrot set in the Mandelbrot set itself (and in many other parameter planes).
These little copies fall into two distinct types:
\textit{primitive} copies, which closely resemble the Mandelbrot set and feature a cusp at the root of the principal hyperbolic component, and
\textit{satellite} copies, whose principal hyperbolic component has a smooth boundary at the root (it lacks a cusp).
Lyubich proved in~\cite{Ly} that primitive copies exhibit a stronger form of regularity: they are quasiconformally homeomorphic to the full Mandelbrot set. Satellite copies are also homeomorphic to $\M$, but the homeomorphism is only quasiconformal outside any neighbourhood of the root \cite{Ly}). 

This left an open question: Are the satellite copies mutually quasiconformally homeomorphic?
In a previous work~\cite{LP}, we showed that the satellite copies with rotation numbers having different denominators are not quasiconformally homeomorphic. In this paper, we complete the picture by proving that, for any fixed denominator
 $q$, the satellite copies $\M_{p/q}$ and $\M_{p'/q}$ of the Mandelbrot set $\M$ are quasiconformally homeomorphic. 

\end{abstract}
\section{Introduction}
For a polynomial on the Riemann sphere, infinity is a (super) attracting fixed point, and the filled Julia set 
consists of points with bounded orbit. 
In particular, for each member of the quadratic family $Q_c(z) = z^2 + c,\,\,c \in \C$, the point at infinity is fixed and (super) attracting, with basin of attraction $A_c(\infty)$, and we can define the filled Julia set $K_c$ of $Q_c$ as $K_c := \widehat \C \setminus A_c(\infty)$. 
The Mandelbrot set $\M$  is the \textit{connectedness locus} for the family $Q_c(z) = z^2 + c$.
 That is, $\M$ consists of all parameters $c$ for which the corresponding filled Julia set $K_c$ is connected. 
 By a theorem of Fatou, the filled Julia set is connected if and only if it contains all the (finite) critical points of the polynomial.
 Since each map $Q_c$ has a unique finite critical point at $z=0$, by Fatou's theorem we can characterize the Mandelbrot set as
$$\M:=\{c \in \C\,|\,Q_c(0)^n \nrightarrow \infty \mbox{ as } n\rightarrow \infty \}.$$

Computer experiments quickly reveal the existence of small copies of $\M$ inside itself.
Each little copy $\M'$ is either \textit{primitive}, with a cusp at the root of its main hyperbolic component, or \textit{satellite}, with the boundary of the main hyperbolic component smooth across the root. For a more accurate definition of primitive and satellite copies of the Mandelbrot set, see Section \ref{DH}. 
Douady and Hubbard defined a natural, dynamics-preserving map $\chi_{\M'}: \M' \rightarrow \M$, and proved that it is a homeomorphism whenever $\M'$ is a primitive copy. In the satellite case, they proved that $\chi_{\M'}$ is a homeomorphism between complements of mathcing neighbourhoods of the respective roots (\cite{DH}).
Haissinsky  later extended the result for satellite copies and proved that the Douady–Hubbard map $\chi_{\M'}$ is a homeomorphism in the satellite case as well (\cite{Hi}). 

Primitive copies of the Mandelbrot set are visually similar to the full Mandelbrot set itself, while satellite copies resemble one half of the logistic Mandelbrot set. This naturally leads to the question of whether the homeomorphisms constructed by Douady and Hubbard possess stronger regularity properties.
Sullivan showed that the appropriate notion of smoothness is given by \textit{quasiconformality}. A conformal map preserves infinitesimal shapes: its differential exists everywhere and is complex linear, so it maps infinitesimal circles to circles. A quasiconformal map, by contrast, is an orientation-preserving homeomorphism that is "almost conformal" in the sense that its differential exists almost everywhere and is real linear, and maps infinitesimal circles to ellipses of bounded eccentricity. The distortion is quantified by a constant that bounds the ratio of the major to minor axes of the image ellipses
(for a precise definition, see Section \ref{qceqs}).
Lyubich proved (\cite{Ly}) that primitive copies of $\M$ are \textit{quasiconformally} homeomorphic to $\M$, and that satellite copies are quasiconformally homeomorphic to $\M$ outside any neighbourhood of their root.
It was generally believed 
that satellite copies could be quasiconformally homeomorphic to one another (see the Remark at page 366 in \cite{Ly}). However, this was disproven in~\cite{LP}, where we showed that satellite copies with rotation numbers having different denominators are not quasiconformally homeomorphic.
In particular, no satellite copy $\Mpq$ with $q>1$ is quasiconformally homeomorphic to a half of the logistic Mandelbrot set.
In this paper, we close the discussion by proving that any two satellite copies $\Mpq$ and $\Mppq$ with rotation numbers of the same denominator $q$, are quasiconformally homeomorphic. 

More precisely, recall that the Mandelbrot set $\M$ is the connectedness locus of the family $Q_c(z)=z^2+c$, parametrized by the critical value $c$. The \textit{logistic} Mandelbrot set, also denoted by $\M$, is the connectedness locus of the family $P\la(z)=\lambda z + z^2$, parametrized by the multiplier $\lambda$ of the $\alpha$-fixed point $z=0$. 
The map $c(\lambda) = \lambda/2-\lambda^2/4$ is a double branched cover from $\C$ to $\C$, branched over the point $1/4$, which is the root of the standard Mandelbrot set $\M$. It maps the logistic Mandelbrot set $2:1$ onto the (standard) Mandelbrot set, and it is conformal except above the branch point.
Let $\Mpq$ denote the satellite copy of the (logistic) Mandelbrot set bifurcating from the main hyperbolic component $\D$ at $\lambda_0=e^{2\pi i p/q}$. We call $\lambda_0=e^{2\pi i p/q}$ the \textit{root} of the satellite copy $\Mpq$.
Now, let $\Mpq$ and $\M_{P/Q}$ be satellite copies with respective roots at $\lambda= e^{2\pi i p/q}$ and $\nu=e^{2\pi i P/Q}$.
Then the map
$$\xi_{p/q,P/Q}:=\chi_{P/Q}^{-1}\circ \chi_{p/q}: \Mpq \rightarrow \M_{P/Q},$$
is a dynamical homeomorphism that is quasiconformal outside any neighbourhood of the root. 
In this paper, we prove that:
\begin{teor}\label{main}
For $p/q$ and $p'/q$ irreducible rationals, the induced Douady-Hubbard homeomorphism
$$\xi_{p/q,p'/q}:=\chi_{p'/q}^{-1}\circ \chi_{p/q}: \Mpq \rightarrow \M_{p'/q},$$
is quasi-conformal, i.e. it is locally the restriction of $K$ quasiconformal maps (with $K$ depending on $q$).
\end{teor}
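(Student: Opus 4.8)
The plan is to realise $\xi_{p/q,p'/q}$ near the root as the map induced on connectedness loci by a \emph{uniformly} quasiconformal conjugacy between the two analytic families of quadratic-like maps produced by satellite renormalization. Recall (from \cite{LP} and the theory of parabolic-like maps) that there are neighbourhoods $\Lambda\ni\lambda_0=e^{2\pi i p/q}$ and $\Lambda'\ni\lambda_0'=e^{2\pi i p'/q}$ and analytic families $\{f_\lambda\}_{\lambda\in\Lambda}$, $\{g_{\lambda'}\}_{\lambda'\in\Lambda'}$ of degree two quadratic-like maps, obtained from $P_\lambda$, resp.\ $P_{\lambda'}$, by satellite renormalization at the root — parabolic-like precisely at the roots and polynomial-like elsewhere — such that $\Mpq\cap\Lambda=\{\lambda:\ J(f_\lambda)\ \text{connected}\}$, $\chi_{p/q}|_\Lambda$ is the associated straightening, and similarly for $p'/q$. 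Since straightening is a homeomorphism of connectedness loci, it suffices to construct a homeomorphism $h$ from a smaller neighbourhood $\Lambda_0\ni\lambda_0$ onto a neighbourhood of $\lambda_0'$, together with hybrid equivalences $\Phi_\lambda$ from $f_\lambda$ to $g_{h(\lambda)}$ whose dilatation is bounded uniformly in $\lambda\in\Lambda_0$: then $h$ is quasiconformal, and since $f_\lambda$ and $g_{h(\lambda)}$ are hybrid equivalent one gets $\chi_{p/q}(\lambda)=\chi_{p'/q}(h(\lambda))$, i.e.\ $h=\xi_{p/q,p'/q}$ on $\Mpq\cap\Lambda_0$, so $h$ is the sought extension. (Equivalently: by Lyubich \cite{Ly}, $\xi_{p/q,p'/q}$ is already quasiconformal off any neighbourhood of the root, so all that is missing is a \emph{uniform} dilatation bound as $\lambda\to\lambda_0$, a single point being quasiconformally removable.)

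\emph{The root parabolic-like maps.} I would first compare the degenerate members $f_{\lambda_0}$ and $g_{\lambda_0'}$. Both are degree two parabolic-like maps whose parabolic fixed point is the common fixed point $0$ of $P_{\lambda_0}^q$, resp.\ $P_{\lambda_0'}^q$, which is a parabolic point of multiplicity $q+1$ with the critical orbit in its immediate basin; hence both straighten to the parabolic quadratic $Q_{1/4}(z)=z^2+1/4$, so they share the same internal (hybrid) class. Their external classes are parabolic circle maps of the same combinatorial type — the one dictated by the number $q$ of repelling petals — whose dependence on the rotation number is through a combinatorial twist (a rotation/shear of the \'Ecalle cylinder) of an amount governed by $p/q$, resp.\ $p'/q$; for a common denominator $q$ these twists are commensurable, differing by an amount with denominator $q$, so the external classes are quasiconformally conjugate even though not conformally conjugate. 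Having matching internal class and quasiconformally matching external class, parabolic-like straightening theory produces a quasiconformal conjugacy $\Phi_0$ between $f_{\lambda_0}$ and $g_{\lambda_0'}$, conformal on the filled Julia set. This is the step at which the hypothesis $q=Q$ is indispensable: for $q\neq Q$ the two parabolic germs have different petal counts, a quasiconformal conjugacy invariant, whence no such $\Phi_0$ — precisely the obstruction of \cite{LP}.

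\emph{Unfolding and the parameter map.} Next I would parametrise $\Lambda$ and $\Lambda'$ by the multiplier of the fixed point $0$ of the $q$-th iterate, $\mu=\lambda^q$ and $\mu'=(\lambda')^q$; since $\lambda\mapsto\lambda^q$ is biholomorphic near each root onto a neighbourhood of $1$, this identifies the two parameter spaces and reduces the comparison of $\{f_\lambda\}$ with $\{g_{\lambda'}\}$ to that of two unfoldings of parabolic germs of the \emph{same} multiplicity $q+1$ with the \emph{same} first-order multiplier dependence. Working with the perturbed Fatou coordinates and the associated horn (transit) maps — which depend holomorphically on $\mu$ and have bounded geometry — I would transport $\Phi_0$ to a conjugacy $\Phi_\lambda$ between $f_\lambda$ and $g_{h(\lambda)}$: extend the conjugacy off the Julia sets through the identification of the fundamental annuli and of the attracting/repelling Fatou coordinates (this carries the bounded \'Ecalle shear), interpolate quasiconformally on the remaining piece of bounded geometry, and correct via the Measurable Riemann Mapping Theorem; a $\lambda$-lemma / holomorphic-motion argument then supplies continuity in $\lambda$ and conformality on the escaping locus. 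This produces $h$ and the family $\Phi_\lambda$, and the reduction of the first paragraph identifies $h$ with $\xi_{p/q,p'/q}$ near the root.

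\emph{Where the difficulty lies.} The delicate step is the \emph{uniform} dilatation bound for $\Phi_\lambda$ as $\lambda\to\lambda_0$, i.e.\ controlling the conjugacy through the parabolic implosion. As $\lambda\to\lambda_0$ the fundamental domains of $f_\lambda$ and $g_{h(\lambda)}$ become long and thin and a naive (straightening) conjugacy has dilatation comparable to the reciprocal modulus of the degenerating annulus, which blows up; the conjugacy must instead be built \emph{conformally} on this degenerating part, using the perturbed Fatou coordinates, which conjugate both maps to the \emph{same} translation model, and be quasiconformal only on a region of bounded geometry, where it reduces to the fixed \'Ecalle shear by the bounded angle $(p-p')/q$. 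Proving that this shear — and hence the dilatation — stays bounded along the entire implosion is where $q=Q$ is genuinely exploited: the repelling petals of the two root parabolics are then in a canonical cyclic bijection, so the perturbed horn maps are conjugate by maps of dilatation depending only on $q$. Granting such a bound, letting $\lambda\to\lambda_0$ furnishes the quasiconformal extension of $\xi_{p/q,p'/q}$ across the root.
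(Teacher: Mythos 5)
Your outline moves in the same general direction as the paper (uniformly quasiconformal conjugacies in dynamical space, controlled through the parabolic implosion by Fatou-coordinate techniques, then pushed to parameter space), but it contains a genuine gap at the parameter-transfer step. You assert that once one has a homeomorphism $h$ and hybrid equivalences $\Phi_\lambda$ from $f_\lambda$ to $g_{h(\lambda)}$ with uniformly bounded dilatation, ``then $h$ is quasiconformal.'' This implication is not automatic and is in fact the content of the entire second half of the paper. A uniform bound on the dilatation of dynamical conjugacies does not directly control the Beltrami coefficient of the induced map on parameters: one must embed the correspondence in a holomorphic motion over a disk of radius $1/k$ in Lyubich's space of quadratic-like germs, rectify that motion to a motion of $\widehat\C$ via a uniformization of the vertical fiber $\check{\mathcal Z}_{f_0}$, and only then invoke the $\lambda$-lemma to get $\|\mu_\Phi\|_\infty\le\|\mu_\phi\|_\infty$ (Theorem \ref{Uniformly_qc_copies} and Theorem \ref{Harvest}). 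Moreover, for this to work the paper cannot use mere hybrid equivalences defined near the filled Julia sets, as you propose: it needs \emph{pre-exterior} quasiconformal equivalences, i.e.\ globally defined conjugacies on the complements $\widehat\C\setminus U_\lambda'$, precisely so that the Beltrami disks $\sigma_f^\lambda$ can be defined coherently on all of $\widehat\C$ for every $f$ in the fiber, including the maps with disconnected Julia set. Your reduction skips this entirely.

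On the dynamical side your plan also defers the hardest work. You propose to build a conjugacy $\Phi_0$ between the two root parabolic-like maps and then ``transport'' it through the unfolding; but parabolic implosion is discontinuous, and transporting a conjugacy from the limit parameter to nearby parameters with uniform bounds is exactly what needs proving. The paper instead constructs, for each $\lambda\in\Mpqr$ separately, explicit interpolations — a straight-line interpolation in straightened linearizing strips between the two external rays bounding the critical-value Julia piece (Theorem \ref{intrays}), and interpolations on ``butterfly'' domains around the $q-1$ other points of the $q$-cycle built from the Buff vector field — and proves uniformity by compactness (Carath\'eodory convergence of the pointed domains to the Lavaurs-limit objects, compact families of quasisymmetric boundary maps). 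The quantitative input that makes $q=Q$ work is not a heuristic matching of ``twists'' but the uniform bound $|1/\Lambda - 1/N| < D$ for $\nu=\xi(\lambda)$ from \cite{LP} and Kapiamba's near-parabolic estimates, which controls the difference of the relevant cylinder moduli and of $2\pi i/\Log\rho_\lambda - 2\pi i/\Log\rho_\nu$. Your closing sentence ``granting such a bound'' concedes the point: both halves of the argument — the uniform dynamical interpolation and the passage to parameters — are stated rather than proved in your proposal.
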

Theorem \ref{main} follows from Theorem \ref{Uniformly_qc_conjugacies} combined with Theorem \ref{Harvest}.

\subsubsection{Strategy of the proof}
The presence of homeomorphic copies of the Mandelbrot set within itself (and in other parameter spaces) is explained by the theory of polynomial-like maps, introduced by Douady and Hubbard in \cite{DH}.
 A degree $d$ polynomial-like map is an object whose dynamics resembles the behaviour of a degree $d$ polynomial  near its filled Julia set. More precisely, a polynomial-like map is a triple $(f,U',U)$, where $U'$ and $U$ are isomorphic to $\D$, $U' \subset \overline{U'} \subset U$, and $f: U' \rightarrow U $ is a proper holomorphic map of degree $d$. 
The filled Julia set $K_f$ of a polynomial-like map $(f,U',U)$ 
is the set of points whose forward orbits under $f$ remains in $U'$.
Douady and Hubbard associated to any degree $d$ polynomial-like map $(f,U',U)$ an \textit{internal} and an \textit{external} class. The internal class is basically given by the conformal dynamics of the polynomial-like map $(f,U',U)$ on $K_f$. The external class is (up to real-analytic conjugacy) an expanding degree $d$ real analytic circle covering map which encodes the dynamics of the polynomial-like map $(f,U',U)$ outside $K_f$. These notions of internal and external maps have been crucial in developing many theories based on polynomial-like maps and their generalizations.
McMullen and Lyubich observed that Douady and Hubbard's notion of conformal equivalence between polynomial-like maps leads to equivalence classes that are too broad to effectively describe phenomena like renormalization fixed points (e.g., the Feigenbaum point) or the dynamics near such fixed points. In particular, in Lyubich's definition, the external class is determined by allowing conjugation only by rigid rotations.
To define a meaningful notion of Teichmüller distance between external classes, we require that hybrid equivalences be globally defined quasiconformal maps fixing $\infty$ and $0$, conjugating the dynamics in a neighborhood of the filled Julia set, and conformal almost everywhere on that set. Consequently, quasiconformal conjugacies between external classes must be restrictions of \textit{gloabal} quasiconformal maps fixing $\infty$, $1$ and $0$ (see Section \ref{L}).
This leads us to introduce the notion of 
\textit{pre-exterior map}, \textit{pre-exterior equivalence} and in particular \textit{pre-exterior quasiconformal equivalence}.
More precisely, let $\gamma\subset \C$ be a Jordan curve. Denote the bounded component of $\C\setminus\gamma$ by $D(\gamma)$, and the unbounded component of $\C\setminus\gamma$ by $W(\gamma)$.
A degree $d\geq 2$ \textit{pre-exterior map} is a holomorphic map $h$ defined in a neighbourhood of a quasicircle (see Section \ref{qceqs} for a definition of quasicircle) $\gamma \in \C$ such that $$h_|: \gamma \rightarrow \gamma'=h(\gamma) \in \C$$ is a degree $d$ orientation preserving local diffeomorphism, and $$D(\gamma) \subset \overline{D(\gamma)} \subset D(\gamma')$$ (see Definition \ref{pre-exterior}).
We say that two pre-exterior maps $(h_i, \ga_i)$, $i=1,2$ are \textit{pre-exteriorly equivalent}, 
if there exists a biholomorphic map $B: W(\ga_1) \rightarrow  W(\ga_2)$ fixing $\infty$, 
whose homeomorphic extension to the boundary satisfies 
$$B\circ h_1 = h_2\circ B \mbox{ on }\ga_1$$ and 
thus $B(\ga'_1) = \ga'_2$.
A \textit{pre-exterior quasiconformal equivalence} is a quasiconformal homeomorphism $\phi: W(\ga_1) \rightarrow  W(\ga_2)$ fixing $\infty$, 
whose homeomorphic extension to the boundary satisfies 
$$\phi\circ h_1 = h_2\circ \phi \mbox{ on }\ga_1$$ and 
thus $\phi(\ga'_1) = \ga'_2$.
This notion refines the concept of external class: if two polynomial-like maps are pre-exteriorly equivalent, they necessarily have the same external class. However, the converse does not hold in general, as pre-exterior equivalence is a global condition, whereas external class is defined up to local conjugacy.
By iterating lifting and the Rickmann Lemma, it is easy to see that two quadratic-like maps that are internally equivalent and pre-exteriorly quasiconformally equivalent -via a pre-exterior qc equivalence $\phi$- are globally hybrid equivalent, 
with a hybrid conjugacy whose dilatation is bounded by that of $\phi$ (see Corollary \ref{extension_of_qc-pre-equivalence}).

To prove Theorem \ref{main}, we first construct a family of uniformly quasiconformal pre-exterior conjugacies $\varphi\la$ between hybrid equivalent degree $2$ polynomial-like maps from the families $\{f\la\}_{\lambda \in \Mpq}$, where $f\la:={P\la^q}_|$ and $\{g\nn\}_{\nu \in \Mppq},$ where $g\nn:={P\nn^q}_|$ (see Section \ref{dyn}).
More specifically, in Section \ref{between rays}, we construct a family of uniformly quasiconformal interpolations $\{\widetilde\Phi\la\}_{\lambda \in \Mpqr}$ between external rays that separate the little filled Julia set containing the critical value.
In Section \ref{cycle} we construct  a family of uniformly quasiconformal interpolations around the $q-1$ points of the $q$ cycle, excluding the $\alpha$-fixed point of the quadratic-like restriction $f\la$. In Section \ref{Extension},
we combine these interpolations to construct the family of uniformly quasiconformal pre-exterior equivalences $\varphi\la$ (see Theorem \ref{Uniformly_qc_conjugacies}).

Then, using pre-exterior quasiconformal equivalences between hybrid equivalent degree $2$ polynomial-like maps with different external classes, as well as pre-exterior equivalences between degree $2$ polynomial-like maps with the same external class but different internal one, we will lift neighbourhoods of points within satellite copies of the Mandelbrot set from the Lyubich space of quadratic-like germs to subsets of $\C$. 
We will construct a holomorphic motion between these lifted neighborhoods and bound its dilatation in terms of the dilatation of the pre-exterior equivalence between the hybrid equivalent degree $2$ polynomial-like maps (see Theorem \ref{Uniformly_qc_copies}). This will enable us to bound the dilatation of the homeomorphism between neighbourhoods of points in satellite copies in terms of the dilatation of the quasiconformal conjugacy between these points in dynamical plane (see Theorem \ref{Harvest}).  
Theorem \ref{main} will then follow from Theorem \ref{Uniformly_qc_conjugacies} combined with Theorem \ref{Harvest}.

\begin{remark}
Let $\Mpq$ and $\M_{p'/q}$ be satellite copies of the Mandelbrot set with root point $\lambda= e^{2\pi i p/q}$ and $\nu=e^{2\pi i p'/q}$ respectively. Consider the dynamical homeomorphism 
$$\xi_{p/q,p'/q}:=\chi_{p'/q}^{-1}\circ \chi_{p/q}: \Mpq \rightarrow \M_{p'/q}.$$
As the Douady-Hubbard homeomorphism $\chi$ between copies of the Mandelbrot set and the whole Mandelbrot set is holomorphic on every hyperbolic component (see \cite{CG}, chapter 8.2), the same holds for $\xi_{p/q,p'/q}$. Hence, if $\xi_{p/q,p'/q}$ is quasiconformal outside the main hyperbolic component of the satellite copy, it is quasiconformal on the entire copy.
Let $H_{p/q}$ and $H_{p'/q}$ denore the main hyperbolic components  of the satellite copies $\Mpq$ and $\Mppq$, respectively. Define $\Mpqr:= \Mpq\setminus H_{p/q}$ and $\Mppqr:= \Mppq\setminus H_{p'/q}$. We will show that $$\xi_{p/q,p'/q|}:=\chi_{p'/q}^{-1}\circ \chi_{p/q|}: \Mpqr \rightarrow \Mppqr$$ is quasiconformal, i.e. it is locally the restriction of $K$ quasiconformal maps (with $K$ depending on $q$).
\end{remark}

\subsection{Preliminaries}
\subsubsection{Polynomial-like maps and copies of the Mandelbrot set in the Mandelbrot set}\label{DH}
As we said in the Introduction, the filled Julia set $K_c$ for $Q_c(z)=z^2+c$ is the complement of the basin of attraction of infinity.
Thus, the dynamics of a polynomial $Q_c$ in a neighborhood of $K_c$ is expanding, and the preimage under $Q_c$ of a (suitable) topological disk $U_P$ containing $K_c$ is a topological disk $U'_P$ compactly contained in $U_P$, and $Q_c$ is a proper and holomorphic map from $U'_P$ to $U_P$.
A polynomial-like map is a map that behaves, within some restrictions of its domain, like a polynomial. More formally, a polynomial-like map of degree $d$ is a triple $(f, U', U)$ where $U'$ and $U$ are topological disks, $U'$ is compactly contained in $U$, and $f: U' \to U$ is a proper and holomorphic map of degree $d$. The filled Julia set of the polynomial-like map $(f, U', U)$ is the set of points that do not escape $U$. By Douady and Hubbard's Straightening Theorem (\cite{DH}), each degree $d$ polynomial-like map $(f, U', U)$ is conjugate, via a \textit{hybrid equivalence}, this is a quasi-conformal homeomorphism $\phi$ with $\overline{\partial} \phi=0$ on $K_f$, to a polynomial of the degree $d$, a unique such member if the filled Julia set $K_f$ is connected.

Now, consider a family $\{f_l\}_{l \in L}$, $L \approx \D$, of polynomial-like maps of degree $2$, with $f_l = Q^k_c$ and $k > 1$ fixed in this family.
 Following Lyubich, we call the filled Julia set $K_l$ of the map $f_l$ a \textit{little} Julia set, to distinguish it from the filled Julia set $K_c$ of the original quadratic polynomial $Q_c$ (which contains $k$ little Julia sets: $K_l, Q_c(K_l), ..., Q_cQ^{k-1}_c(K_l)$).
For each parameter $l$, $f_l$ is hybrid equivalent to a polynomial, and this equivalence is unique if $K_l$ is connected. Thus, defining $M_f$ as the set of parameters $l$ for which the filled Julia set $K_l$ of $f_l$ is connected, we can define a map $$\chi: M_f \to \M$$ which associates to each parameter $l \in L$ the value $c$ such that $Q_c$ is  hybrid equivalent to $f_l$. Douady and Hubbard extended this map to the entire disk $L$ and proved that, if $M_f \subset L$ (and some additional technical hypotheses, which can be found in Definition 1 in Chapter 2 of \cite{DH}, together with the parametric degree of the family to be $1$, or in Definition 42.1 in Chapter 6 of \cite{Ly2}), the map is a homeomorphism between the connectedness locus $M_f$ and the Mandelbrot set $\M$.
As a consequence, we have that little copies of the Mandelbrot set within the Mandelbrot set are homeomorphic to the Mandelbrot set itself.
These little copies are called \textit{primitive} if the little Julia sets are disjoint, and \textit{satellite} if they are not. In the primitive case, the small copies have a cusp point, which is called \textit{the root}, where $Q_c$ has a periodic point of period $k$ and multiplier $1$. 
In the satellite case, a small copy is connected via its root to some hyperbolic component of strictly lower period within $M$, and the boundary of the small copy is smooth at the root. 
\subsubsection{Quasiconformal maps and quasisymmetric maps}\label{qceqs}
A map
$\phi:U \subseteq \C \rightarrow V \subseteq \C$ is called $K$-\textit{quasiconformal} if:
\begin{itemize}
    \item $\phi$ is an orientation preserving homeomorphism,
    \item $\phi \in W_{loc}^{1,2}(U)$, meaning that it has first-order distributional derivatives which are in $L_{loc}^2(U)$,
    \item $|\overline{\partial}\phi(z)| \leq k |\partial\phi(z)|$ a.e.~with $k= \frac{K-1}{K+1}<1.$
\end{itemize}
That is, defining the Beltrami coefficient $\mu_{\phi}(z):=\frac{\overline{\partial}\phi(z)}{\partial\phi(z)}$, which exists a.e.~we have $|\mu_{\phi}(z)|\leq k$ a.e. 
Then the (real) dilatation at $z$ is $K_\phi(z) = \frac{1+|\mu(z)|}{1-|\mu(z)|}$. 
Geometrically this implies that the differential 
$D_z\phi = \partial\phi(z)dz + \overline{\partial}\phi(z)d\overline{z}$ exists and is injective a.e.~and it and its inverse maps circles to ellipses with distortion $K_\phi(z) \leq K$. 
The set of $K$-quaisconformal maps on a domain $U$ is sequentially pre-compact and any limit map is either $K$ quasiconformal or constant.

A family of maps $\phi\la$ is \textit{uniformly quasiconformal} if for every $\lambda$ we have that $\phi\la$ is $K\la$ quasiconformal, and $K\la \leq \hat K <\infty$.

An orientation preserving homeomorphism
$h: \S^1 \rightarrow \S^1$ is \textit{quasisymmetric} if there exists a $k_h\geq 1$ such that, for all $x \in\R$ and $t>0$, we have 
$$\frac{1}{k_h}\leq \frac{H(x+t)-H(x)}{H(x)-H(x-t)}\leq k_h\,,$$
where $H:\R\to\R$ is any lift of $h$ to $e^{2 \pi i t}$. 

If $h: \S^1 \rightarrow \C$ is quasisymmetric, then it has a $K$-quasiconformal extension $\phi: \D \rightarrow \D$ with $K_\phi$ depending only $k_h$ (see \cite{BA}).

A \textit{quasicircle}  is the image of a circle under a quasiconformal map, and a \textit{quasidisc} is the image of a disc under a quasiconformal map.

\section{Plowing in dynamical space}\label{dyn}
We call two degree $2$ polynomial-like maps \textit{corresponding} if they have connected filled Julia set and are hybrid equivalent (so that the map $\xi$ would send one to the other).
In this section we will construct a family of pre-exterior uniformly quasiconformal equivalences $\varphi\la$ between corresponding degree $2$ polynomial-like maps, these belonging to the two families $\{f\la\}_{\lambda \in \Mpq},\,\,f\la:=P_{\lambda |}^q$ and $\{g\nn\}_{\nu \in \Mppq},\,\,\,g\nn:=P_{\nu |}^q$. We will start by constructing a family of uniformly quasiconformal interpolations $\{\widetilde\Phi\la\}_{\lambda \in \Mpqr}$ between external rays separating the little filled Julia set containing the critical value (see Section \ref{between rays}). 
In Section \ref{cycle}, we construct  a family of uniformly quasiconformal interpolations about the $q-1$ points of the $q$ cycle different from the $\alpha$-fixed point of the quadratic-like restriction $f\la$. In Section \ref{Extension}, we use these previous interpolations to construct a family of pre-exterior uniformly quasiconformal equivalences $\varphi\la$.

\subsection{Interpolation between external rays}\label{between rays}
Let $\phi\la: \C \rightarrow \C$ and $\phi\nn: \C \rightarrow \C$ denote the normalized linearizers for
$P\la$ and $P\nn$ at the repelling fixed point 
$\alpha\la$ and $\alpha\nn$ respectively, so we have  $P\la( \phi\la(z)) = \phi\la(\lambda z)$, 
$ P\nn (\phi\nn(z)) = \phi\nn (\nu z)$ and $\phi\la'(0) = \phi\nn'(0) =1$.
Let $D\la$ and $D\nn$ be the image of the maximal disk of univalency 
for $\phi\la$ and $\phi\nn$ respectively.
This is,  $r\la$ is the maximal radius such that 
$$\phi\la: \D(0,r\la) \rightarrow D\la$$ is univalent, $\lambda \in \Mpq$, and similarly
$r\nn$ is the maximal radius such that 
$$\phi\nn: \D(0,r\nn) \rightarrow D\nn$$ is univalent, $\nu \in \Mppq$.
For $\iln= \{\lambda, \nu\}$, let $K_\iln$ be the little Julia set containing the critical value, and let $R_\iln'$
and $R_\iln$ be the external rays separating $K_\iln$ from its $q-1$ copies in the filled Julia set $K_{P_\iln}$, in anticlockwise order. 
 Let $R_1'$ and $R_1$ be the images in B\"ottcher coordinates of $R\la'$ and $R\la$ respectively, and let
$R'_2$ and $R_2$ be the images in B\"ottcher coordinates of $R\nn'$ and $R\nn$ respectively. Then there exists and angle $\widehat \theta$ such that the rotation $R_{\widehat\theta}$ of angle $\widehat \theta$ maps $R_1$ to $R_2$ and $R_1'$ to $R_2'$.
\begin{figure}[tbh]
    \centering
    \includegraphics[scale = 0.6]{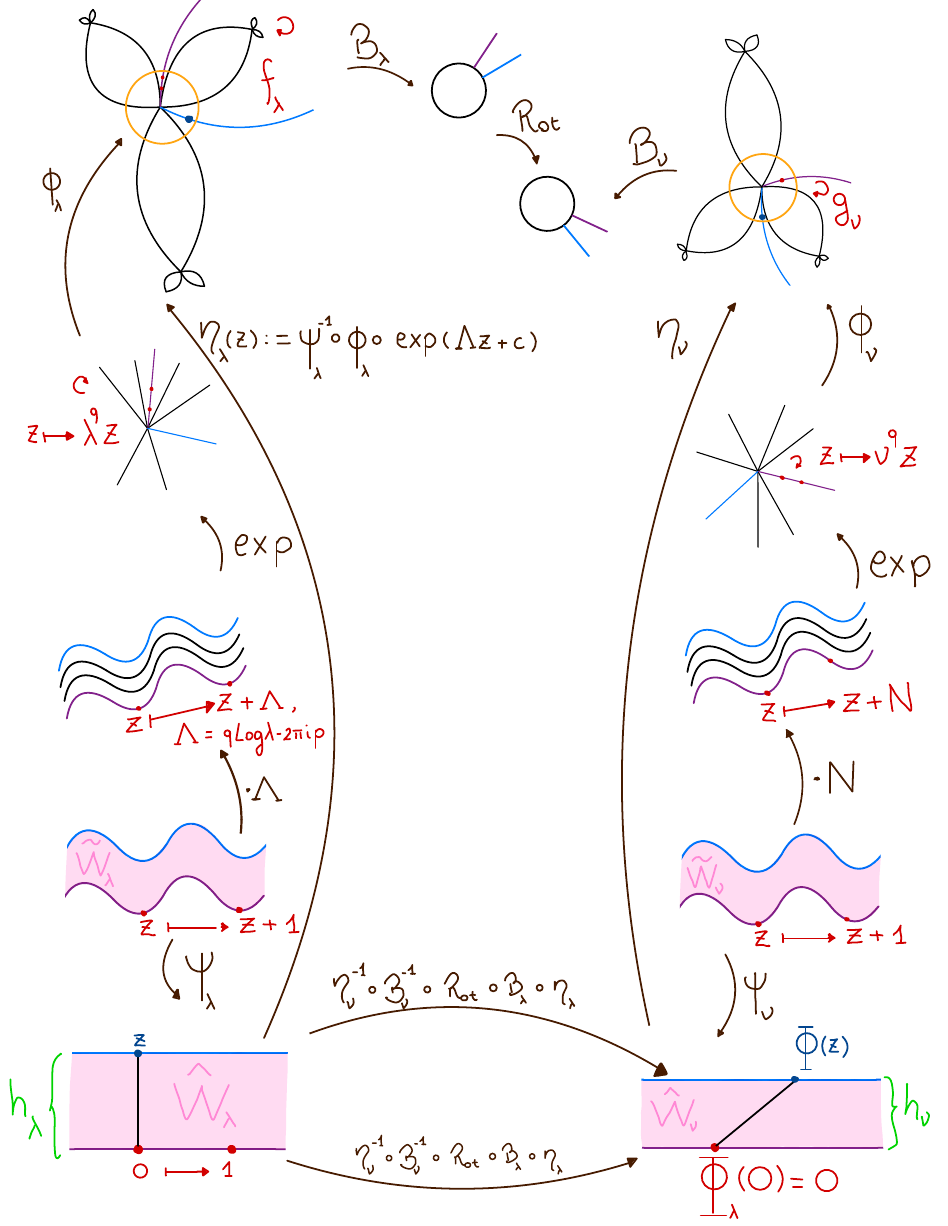}
    \caption{The change of coordinates}
    \label{fig:changeCoor}
\end{figure}
For every $\lambda \in \Mpqr$ choose an equipotential $E=E\la$ such that, defining $z_\iln:=R_\iln \cap E$, $x\la=f\la(z\la)\in R\la$, $x\nn=g\nn(z\nn)\in R\nn$, $z'_\iln:=R'_\iln \cap E$, $x'\la=f\la(z'\la)\in R'\la$, and $x'\nn=g\nn(z'\nn)\in R'\nn$,
the connected component of $R_\iln$ bounded by $\alpha_\iln$ and $x_\iln$, and the connected component of $R'_\iln$ bounded by $\alpha_\iln$ and $x'_\iln$, both belong to $D_\iln$ (see Figure \ref{fig:changeCoor}).
Let $\Lambda = \Lambda\la = \Lambda(\lambda) = \Log \lambda^q =q\Log\lambda-2\pi i p$. 
Then $\lambda\mapsto\Lambda$ is univalent on a neighbourhood of $\Mpq$. 
Let
$$
\phi\la\circ \exp(\Lambda z+c): \C \rightarrow \C, 
$$
be the conformal map conjugating $T_1(z)=z+1$ 
to the dynamics of $f\la$, and
normalized by sending $0$ to $z\la$ (see Figure \ref{fig:changeCoor}). Call $\tilde R\la$ the preimage of $R\la$ containing zero, $\tilde R'\la$ the preimage of $R'\la$ between 
$\tilde R\la$ and $\tilde R\la +2\pi i/\Lambda$, and $\tilde z\la'$ the preimage of $z\la'$ on $\tilde R'\la$.
Similarly, let $N= \Log \nu^q=q \Log \nu -2\pi i p'$ and
$$\phi\nn\circ \exp(N z+c'): \C\rightarrow \C$$
be the conformal conjugacy between $T_1$ and
$g\nn$, 
normalized by sending $0$ to $z\nn$, call $\tilde R\nn$ the preimage of $R\nn$ containing zero, $\tilde R'\nn$ the preimage of $R'\nn$ between 
$\tilde R\nn$ and $\tilde R\nn +2\pi i/N$, and $\tilde z\nn'$ the preimage of $z\nn'$ on $\tilde R'\nn$.

Call $\tilde W_\iln$, $i=\lambda, \nu$, the bi-infinite strip 
bounded by $\tilde R_\iln$ from below and $\tilde R'_\iln$ from above (see Figure \ref{fig:changeCoor}). 
Let $h_\iln$ be the modulus of the cylinder $\tilde W_\iln/\Z$, then there exists a unique conformal map $$\psi_\iln:\tilde W_\iln \rightarrow \hat W_\iln := \{x+iy\;| 0< y < h_\iln\}$$ fixing zero and commuting with $T_1(z)=z+1$. As the boundaries of $\tilde W_\iln$ and $\hat W_\iln$ are analytic, the map $\psi_\iln$ extends conformally to a neighbourhood of $\tilde W_\iln$, and we define  $$\hat z_\iln'=\psi_\iln(\tilde z_\iln'),\,\,\,\,\,\hat x_\iln'=\psi_\iln(\tilde x_\iln'),\,\,\,\,\,\hat R_\iln = \psi_\iln (\tilde R_\iln),\,\,\,\,\,\,\hat R'_\iln = \psi_\iln (\tilde R'_\iln).$$ Note that 
$\hat R_\iln\subset \R$, and $\hat R'_\iln\subset \R + i h_\iln$.
Define
$$\eta\la:= \phi\la\circ \exp(\Lambda z+c)\circ \psi\la^{-1}: \hat W\la \rightarrow \C,$$
and note that this map conjugates translation by one to the dynamics of $f\la$ (see Figure \ref{fig:changeCoor}). Similarly, the map
$$\eta\nn:= \phi\nn\circ \exp(N z+c')\circ \psi\nn^{-1}: \hat W\nn \rightarrow \C,$$
conjugates translation by one to the dynamics of $g\nn$.

Let $B\la$ and $B\nn$ be the B\"ottcher maps for $f\la$ and $g\nn$ respectively, and let 
$$\Phi\la: \partial \hat W\la \rightarrow \partial \hat W\nn $$
be a lift of $B\nn^{-1} \circ R_{\widehat\theta}\circ B\la \circ \eta\la$ to $\eta\nn$ fixing the real axis and zero, and mapping $z\la'$ to $z\nn'$, then $\Phi$
is a real analytic map between the boundaries of two straight infinite strips, with $\Phi(\hat z\la')=\hat z\nn'$ (see Figure \ref{fig:changeCoor}). 
We extend $\Phi$ to $\hat W\la$ by mapping the straight segment joining $x \in \hat R\la$ to $x+ih\la \in \hat R'\la$ to the straight segment joining $\Phi(x) \in \hat R\nn$ to $\Phi(x+ih\la) \in \hat R'\nn$, this is:  
$$\Phi\la: \overline{\hat W\la} \longrightarrow \overline{\hat W\nn}$$
$$x+iy \longrightarrow \Phi\la(x) + \frac{y}{h\la} (\Phi\la(x+ih\la) - \Phi\la(x)).$$

The aim of this Section is to show that the family  $\{\Phi\la\}_{\lambda \in \M^*_{p/q}}$ is uniformly quasiconformal:
\begin{teor}\label{intrays}
 Let $\mu\la= \frac{\bar \partial \Phi\la}{ \partial \Phi\la}$, 
 then there exists $k, 0\leq k < 1$ such that, for all $\lambda \in \M^*_{p/q}$,
 $$\sup\la ||\mu\la||_\infty < k.$$
\end{teor}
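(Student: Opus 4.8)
The plan is to read off the Beltrami coefficient of $\Phi_\lambda$ from its explicit shape and then reduce the uniform bound to a short list of geometric estimates on the two strips and their boundary maps. By construction $\Phi_\lambda$ is the vertical affine interpolation between its two boundary maps: writing $u_\lambda\colon\R\to\R$ for $\Phi_\lambda|_{\hat R_\lambda}$ (so $u_\lambda'>0$) and $\Phi_\lambda(x+ih_\lambda)=w_\lambda(x)+ih_\nu$ with $w_\lambda\colon\R\to\R$ real-analytic and $w_\lambda'>0$ (since $\Phi_\lambda$ maps $\hat R'_\lambda$ orientation-preservingly onto $\hat R'_\nu\subset\R+ih_\nu$), one has for $0\le y\le h_\lambda$
$$\Phi_\lambda(x+iy)=\Bigl(1-\tfrac{y}{h_\lambda}\Bigr)u_\lambda(x)+\tfrac{y}{h_\lambda}\,w_\lambda(x)+i\,\tfrac{h_\nu}{h_\lambda}\,y.$$
Thus $\Phi_\lambda$ is a homeomorphism of the closed strips, and a short computation of $\partial\Phi_\lambda$ and $\bar\partial\Phi_\lambda$ gives, with $a=a_\lambda(x,y):=\bigl(1-\tfrac{y}{h_\lambda}\bigr)u_\lambda'(x)+\tfrac{y}{h_\lambda}w_\lambda'(x)>0$, $b=b_\lambda(x,y):=\tfrac{1}{h_\lambda}\bigl(w_\lambda(x)-u_\lambda(x)\bigr)$ and $d=h_\nu/h_\lambda$,
$$|\mu_\lambda(x+iy)|^2=\frac{(a-d)^2+b^2}{(a+d)^2+b^2}=1-\frac{4ad}{(a+d)^2+b^2};$$
in particular the Jacobian equals $ad>0$ and $|\mu_\lambda|<1$ pointwise. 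Hence $\sup_{\lambda\in\Mpqr}\|\mu_\lambda\|_\infty<1$ will follow from three uniform (over $\lambda\in\Mpqr$) estimates: (i) $0<c_1\le h_\lambda,h_\nu\le c_2<\infty$; (ii) $0<c_3\le u_\lambda'(x),w_\lambda'(x)\le c_4<\infty$ for all $x$; (iii) $\sup_x|w_\lambda(x)-u_\lambda(x)|\le c_5<\infty$. Indeed, (i)--(iii) give $a\in[c_3,c_4]$, $d\in[c_1/c_2,c_2/c_1]$ and $|b|\le c_5/c_1$, so $4ad/((a+d)^2+b^2)$ is bounded below by a positive constant $\varepsilon=\varepsilon(c_1,\dots,c_5)$, i.e. $\|\mu_\lambda\|_\infty^2\le 1-\varepsilon$.

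The point that makes (ii) and (iii) the routine part is that the construction is $\Z$-equivariant. All the conjugacies entering $\Phi_\lambda$ — the maps $\eta_\lambda,\eta_\nu$ (which intertwine $T_1(z)=z+1$ with $f_\lambda=P_\lambda^q$ and $g_\nu=P_\nu^q$), the B\"ottcher maps, and the rotation $R_{\widehat\theta}$ (whose angle is a difference of the rational angles of the periodic rays $R_\lambda,R_\nu$) — are mutually compatible under composition, so $\Phi_\lambda$ commutes with $T_1$. Consequently $u_\lambda(x+1)=u_\lambda(x)+1$, $w_\lambda(x+1)=w_\lambda(x)+1$, and $\mu_\lambda$ is $1$-periodic in $x$. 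This turns each supremum in (i)--(iii) into a maximum over the compact fundamental domain $[0,1]\times[0,h_\lambda]$, and the $1$-periodic continuous function $x\mapsto w_\lambda(x)-u_\lambda(x)$ is automatically bounded for each fixed $\lambda$, giving (iii) pointwise. Furthermore, on neighbourhoods of the analytic boundary curves $\hat R_\lambda$ and $\hat R'_\lambda$ the map $\Phi_\lambda$ is a composite of conformal maps — $\psi_\lambda^{-1}$ and $\psi_\nu^{-1}$ extend conformally across the analytic boundaries of the straight strips, the linearizers and the exponentials are holomorphic, the rotation is conformal, and the B\"ottcher maps are conformal off the critical orbit — so $u_\lambda$ and $w_\lambda$ extend holomorphically across $\R$ with non-vanishing derivative, and $u_\lambda',w_\lambda'$ are positive and bounded above and away from $0$ on $[0,1]$ for each fixed $\lambda$. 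Passing from these pointwise statements to the uniform bounds (i)--(iii) is then a matter of continuous dependence of all the data on $\lambda$ and compactness of $\Mpqr$; away from the root $\lambda_0=e^{2\pi i p/q}$ this is immediate, so the whole problem reduces to the behaviour near $\lambda_0$.

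The remaining issue — which I expect to carry essentially all of the technical weight — is the behaviour near $\lambda_0$, and above all the uniform two-sided bound (i) on the conformal moduli $h_\lambda=\operatorname{mod}(\tilde W_\lambda/\Z)$ and $h_\nu$. The difficulty is that at $\lambda_0$ the fixed point of $P_{\lambda_0}$ of multiplier $\lambda_0$ is parabolic, so as $\lambda\to\lambda_0$ the linearizer $\phi_\lambda$ and its disc of univalence $D_\lambda$ collapse toward that point — which forces the equipotential $E_\lambda$, chosen so that the relevant ray segments lie inside $D_\lambda$, to have potential tending to $+\infty$ — while simultaneously $\Lambda_\lambda=\Log\lambda^q\to 0$. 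What must be shown is that these two collapses cancel in the composed coordinate $\phi_\lambda\circ\exp(\Lambda_\lambda z+c)$, so that after the conformal uniformization $\psi_\lambda$ the straight strip $\hat W_\lambda$, its modulus $h_\lambda$, and the induced boundary maps $u_\lambda,w_\lambda$ all have uniformly bounded geometry for $\lambda$ near $\lambda_0$ (and likewise for $h_\nu$ and $g_\nu$). The natural route is to identify the subsequential limits of $\phi_\lambda\circ\exp(\Lambda_\lambda z+c)$ as $\lambda\to\lambda_0$ with repelling Fatou coordinates of the parabolic germ $P_{\lambda_0}$: then $\tilde W_\lambda$ converges to a Fatou-coordinate strip of finite positive modulus bounded by the images of the two limiting external rays, and the \'Ecalle--Voronin / horn-map description keeps the (possibly non-convergent but always bounded) quantities $h_\lambda$ and the boundary derivatives under control near $\lambda_0$. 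This parabolic-implosion analysis is where the real work lies. Granting it, so that (i)--(iii) hold uniformly over $\Mpqr$, substituting the resulting constants into the displayed identity for $|\mu_\lambda|^2$ produces a single $k<1$ with $\sup_{\lambda\in\Mpqr}\|\mu_\lambda\|_\infty\le k$, which is the assertion of \thmref{intrays}.
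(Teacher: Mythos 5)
Your reduction is exactly the one the paper uses: the same affine interpolation formula, the same identity $1-|\mu_\lambda|^2=\frac{4ad}{(a+d)^2+b^2}$ (in the paper's notation this is the reduction to bounding $\partial u_\lambda/\partial x$, $\partial v_\lambda/\partial y$ and $\partial u_\lambda/\partial y$), and the same three target estimates. Up to that point the proposal is correct. The problem is that none of the three estimates is actually established, and the mechanism you name for the part you concede carries ``essentially all of the technical weight'' would not close the most important of them. Estimate (iii) is not a single-family statement. In the paper, bounding $|w_\lambda(x)-u_\lambda(x)|$ reduces, after using $T_1$-equivariance and the uniform bound $|\psi_\lambda(x)-x|\le K_q$ of Lemma \ref{b}, to bounding $|\tilde y_\lambda-\tilde y_\nu|$ with $\tilde y_\lambda=\frac{j}{q}+\frac{2\pi i(q-1)}{q\Lambda}$ and $\tilde y_\nu=\frac{k}{q}+\frac{2\pi i(q-1)}{qN}$, i.e.\ to the estimate $\left|\frac{1}{\Lambda}-\frac{1}{N}\right|<D$ uniformly over $\lambda\in\Mpqr$, $\nu=\xi(\lambda)$. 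Each of $1/\Lambda$ and $1/N$ blows up as the parameters approach the roots; only their \emph{difference} is bounded, and only because $\nu=\xi(\lambda)$ and the two rotation numbers share the denominator $q$ (Proposition 5 of \cite{LP} combined with Corollary 5.4 of \cite{K1}). This is precisely where the hypothesis $q=Q$ enters and where the argument breaks for different denominators. A parabolic-implosion analysis of the single family $P_\lambda$, identifying limits of $\phi_\lambda\circ\exp(\Lambda z+c)$ with Fatou coordinates, controls each strip separately but says nothing about the relative horizontal offset between the $\lambda$-strip and the $\nu$-strip; your proposal never mentions this two-family comparison, so the quantitative heart of the theorem is missing.

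Two further points. For (i), the paper does not pass to implosion limits: the upper bound $h_\lambda\le 2\pi\Re(1/\Lambda)\le\pi/r$ comes from Bers' inequality applied to the cylinder $\tilde W_\lambda/\Z$ embedded in the torus $D_\lambda\setminus\{\alpha_\lambda\}/\lambda^q\approx\C/(\Lambda\Z+2\pi i\Z)$, together with the parameter-space fact (Lemma \ref{1sulambda}) that $\Lambda(H_{p/q})$ contains a disc tangent to $i\R$ at $0$, so that $\Re(1/\Lambda)$ stays bounded \emph{because} $\lambda\notin\D\cup H_{p/q}$; the lower bound comes from the $q-1$ annuli of the basin of infinity contained in the cylinder. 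Your route must at least make explicit where the exclusion of $H_{p/q}$ is used, since without it the bound on $\Re(1/\Lambda)$, hence on $h_\lambda$, is simply false, and $h_\lambda$ need not converge along sequences tending to the root. For (ii), the paper's argument is a Koebe distortion estimate for univalent maps commuting with $T_1$ on a strip of definite height (Lemma \ref{cpthatzeta}), applied to the factorizations $\Phi_{\lambda|\hat R_\lambda}=\hat\zeta_\lambda^u\circ(\hat\zeta_\nu^u)^{-1}$; this is uniform by itself and needs neither compactness of $\Mpqr$ nor a limit analysis at the root. So the skeleton of your proof is right, but the deferred steps are the content of the theorem, and the deferral is to a tool that cannot produce estimate (iii).
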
  

\begin{cor}\label{gen}
 There exists a family $\widetilde \Phi^{i,j}\la$ of uniformly quasiconformal interpolations between any consecutive rays $R^i\la$ and $R^j\la$, in a domain of univalency of the linearizers maps. 
\end{cor}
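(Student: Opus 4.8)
The plan is to obtain all the remaining cases from the single one settled in Theorem~\ref{intrays}, by transporting that interpolation around the cycle of sectors at the $\alpha$-fixed point with the polynomial dynamics, which is conformal and therefore dilatation preserving.

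Recall that for $\iln\in\{\lambda,\nu\}$ the $\alpha$-fixed point $\alpha_\iln$ of $P_\iln$ is the landing point of exactly $q$ external rays $R^0_\iln,\dots,R^{q-1}_\iln$, listed anticlockwise, which cut a neighbourhood of $\alpha_\iln$ into $q$ sectors $S^0_\iln,\dots,S^{q-1}_\iln$, with $S^i_\iln$ bounded by $R^i_\iln$ and $R^{i+1}_\iln$ (indices mod $q$) and meeting exactly one of the $q$ little Julia sets; we normalise the labelling so that $S^0_\iln$ is the sector bounded by $R_\iln,R'_\iln$, i.e.\ the one containing the little Julia set $K_\iln$ through the critical value. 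The map $P_\iln$ permutes these sectors cyclically, shifting the index by $p$ positions when $\iln=\lambda$ and by $p'$ positions when $\iln=\nu$. Conjugating by the conformal changes of coordinates $\eta\la,\eta\nn$, Theorem~\ref{intrays} gives a uniformly quasiconformal interpolation $\widetilde\Phi\la$ between the portions of $R\la,R'\la$ and of $R\nn,R'\nn$ that lie, by the choice of the equipotentials $E_\iln$, inside $D\la$ and inside $D\nn$ respectively, $D_\iln=\phi_\iln(\D(0,r_\iln))$ being the image of the maximal disk of univalence of the linearizer $\phi_\iln$; here $\|\mu_{\widetilde\Phi\la}\|_\infty=\|\mu\la\|_\infty$, which by Theorem~\ref{intrays} is bounded by a single $k<1$ for all $\lambda\in\Mpqr$.

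Let $\tau_\iln$ be the branch of $P_\iln^{-1}$ fixing $\alpha_\iln$. The functional equation $\phi_\iln(\iln z)=P_\iln(\phi_\iln(z))$ exhibits $\tau_\iln$ on $D_\iln$ as $\phi_\iln\circ(z\mapsto z/\iln)\circ\phi_\iln^{-1}$, which is a univalent self-map of $D_\iln$ because $|\iln|\ge1$ on $\Mpqr$; hence every iterate $\tau_\iln^{\,n}$ is again a univalent self-map of $D_\iln$. Moreover $\tau_\iln$ shifts the sector index backwards by those same $p$ (resp.\ $p'$) positions, so for each $i\in\{0,\dots,q-1\}$ there are $n,n'\in\{0,\dots,q-1\}$ --- these exist because $\gcd(p,q)=\gcd(p',q)=1$, and differ in general since $p\ne p'$ --- such that $\tau\la^{\,n}$ carries the part of $S^0\la$ inside $D\la$ onto a subsector of $S^i\la$ and $\tau\nn^{\,n'}$ carries the part of $S^0\nn$ inside $D\nn$ onto the corresponding subsector of $S^i\nn$. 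Then set
$$\widetilde\Phi^{i,j}\la\;:=\;\tau\nn^{\,n'}\circ\widetilde\Phi\la\circ(\tau\la^{\,n})^{-1},\qquad j=i+1.$$
This is a quasiconformal homeomorphism between truncated subsectors of $S^i\la$ and $S^i\nn$ lying in $D\la$ and $D\nn$, taking the two bounding rays of the first onto those of the second, hence an interpolation between $R^i\la,R^{i+1}\la$ and $R^i\nn,R^{i+1}\nn$ in a domain of univalence of the linearizers. Since $\tau\la,\tau\nn$ and the inverse of $\tau\la^{\,n}$ are holomorphic, $|\mu_{\widetilde\Phi^{i,j}\la}|=|\mu_{\widetilde\Phi\la}|\circ(\tau\la^{\,n})^{-1}$ almost everywhere, whence $\|\mu_{\widetilde\Phi^{i,j}\la}\|_\infty=\|\mu_{\widetilde\Phi\la}\|_\infty<k$, so the family $\{\widetilde\Phi^{i,j}\la\}_{\lambda\in\Mpqr}$ is uniformly quasiconformal.

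I do not expect any real difficulty here beyond two bookkeeping points. First, the transport has to be done with the inverse branch $\tau_\iln$ through $\alpha_\iln$, not with a forward iterate of $P_\iln$: only that branch maps the univalence domain $D_\iln$ of the linearizer into itself, and this is exactly where the hypothesis $|\iln|\ge1$ on $\Mpqr$ is used. Second, since $p/q\ne p'/q$ the exponents $n$ and $n'$ must be allowed to differ, the $q$ sectors of $f\la$ being matched to those of $g\nn$ via the anticlockwise cyclic order with the two critical-value sectors identified. Once this is arranged, uniform quasiconformality is immediate from the conformality of the transport maps, so all the genuine analytic content of the subsection lies in Theorem~\ref{intrays}.
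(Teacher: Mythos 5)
Your construction takes a genuinely different route from the paper's. The paper's proof of Corollary \ref{gen} is simply that the whole proof of Theorem \ref{intrays} is repeated verbatim for each pair of consecutive rays: nothing in the estimates of $\partial u_\lambda/\partial x$, $\partial v_\lambda/\partial y$, $\partial u_\lambda/\partial y$ uses the fact that the chosen sector is the one containing the critical value, so one prescribes the boundary values on $R^i_\lambda\cup R^{j}_\lambda$ to be a lift of the B\"ottcher correspondence, straightens the corresponding strip, and interpolates linearly exactly as before. You instead transport the single interpolation $\widetilde\Phi_\lambda$ around the cycle of sectors by the conformal inverse branches $\tau_\lambda,\tau_\nu$ fixing $\alpha$. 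The conformality of the transport, the invariance of $D_\lambda$ and $D_\nu$ under these branches (using $|\lambda|,|\nu|\geq 1$ on the starred copies), and the resulting uniform bound on the dilatation are all correct.

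The gap is in the boundary values, and it is not bookkeeping. In Theorem \ref{intrays} the interpolation is prescribed on the two rays to be (a lift of) $B_\nu^{-1}\circ R_{\hat\theta}\circ B_\lambda$, which preserves the Green's potential; this is what ``interpolation between the rays'' means here, and it is exactly what is needed in Section \ref{Extension}, where the strip maps are glued along the rays to the maps $\widetilde\Phi^j_\lambda$ of Corollary \ref{1} (which preserve potential on the ray parts of their boundaries) and to $B_\nu^{-1}\circ R_{\hat\theta}\circ B_\lambda$ elsewhere. Your map $\tau_\nu^{\,n'}\circ\widetilde\Phi_\lambda\circ P_\lambda^{\,n}$ restricted to $R^i_\lambda$ sends a point of potential $t$ to potential $2^{n}t$ (each application of $P_\lambda$ doubles potential), then to potential $2^{n}t$ (the canonical correspondence preserves it), then to potential $2^{n-n'}t$. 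The exponents are forced by $np\equiv n'p'\pmod q$, so $n\neq n'$ for most $i$ as soon as $p\not\equiv p'$ --- which is precisely the case of interest. Hence your boundary map rescales potential by $2^{n-n'}\neq 1$: on each ray it is still a conjugacy of $f_\lambda$ to $g_\nu$, but it differs from the canonical correspondence by a different deck transformation on each ray, so the resulting sector maps cannot be matched continuously with the neighbouring pieces (each corner piece $\hat A^j_\lambda$ abuts two distinct rays and forces potential preservation on both), and the mismatch cannot be removed by any further conformal transport. The dynamical shortcut fails exactly because the two copies have different internal rotation numbers; the sector-by-sector repetition of the argument of Theorem \ref{intrays} cannot be avoided this way.
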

\begin{proof}
The proof of Theorem \ref{intrays}, contained in the next section, applies verbatim to this Corollary.
\end{proof}

\subsubsection{Proof of Theorem \ref{intrays}}
We need to prove that, for all $\lambda \in \M^*_{p/q}$,
$$|\mu\la|^2 = 
\frac{\left(\frac{\partial u\la}{\partial x}-\frac{\partial v\la}{\partial y}\right)^2+
\left(\frac{\partial u\la}{\partial y}+\frac{\partial v\la}{\partial x}\right)^2}
{\left(\frac{\partial u\la}{\partial x}+\frac{\partial v\la}{\partial y}\right)^2
+\left(\frac{\partial v\la}{\partial x}-\frac{\partial u\la}{\partial y}\right)^2}  \leq k <1.
$$
As $\Phi\la|_{\hat R_1}: \hat R_1\subset \R \rightarrow \R$, and $\Phi\la|_{\hat R_1'}: \hat R_1'\subset \R+ih\la \rightarrow \R+ih\nn$,
we have $$\Phi\la(x+iy)=\Re(\Phi\la(x))+\frac{y}{h\la}(\Re(\Phi\la(x+ih\la))+ih\nn-\Re(\Phi\la(x))),$$
which gives:
$$u\la= \Re(\Phi\la(x))+\frac{y}{h\la}(\Re(\Phi\la(x+ih\la))-\Re(\Phi\la(x)))\,\,\,\,\,\,\,\,\mbox{ and }\,\,\,\,\,\,\,\,\,\,\,\,\,v\la=y\frac{h\nn}{h\la}$$
hence $\frac{\partial v\la}{\partial x} \equiv 0.$
Therefore, in order to prove the Theorem \ref{intrays} it is enough to show that there exist positive constants $C_1,C_2,$ $K_1,K_2$, and $M$ such that, for all $\lambda \in \Mpqr$,
$$C_1<\frac{\partial u\la}{\partial x}<C_2,\,\,\,\,\,\,\,\,\, K_1<\frac{\partial v\la}{\partial y}<K_2,\,\,\,\,\,\,\,\,\frac{\partial u\la}{\partial y}<M.$$
\subsubsection{Bounding $\frac{\partial v\la}{\partial y}$}

\begin{lemma}\label{1sulambda}~\\
$$
\forall\; \lambda \in \Mpqr :\Re(1/\Lambda)\leq \frac{1}{2 r}, 
\qquad
\forall\; \nu \in \Mppqr :\Re(1/N)\leq \frac{1}{2 \hat r}, 
$$
\end{lemma}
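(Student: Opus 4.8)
\emph{Proof proposal.} The plan is to recast the inequality as a statement that the multiplier of $f\la=P_{\lambda|}^q$ at its repelling fixed point $\alpha\la=0$ avoids a round disc, and then to extract that disc from the size of the Koenigs linearization domain, using both $\phi\la'(0)=1$ and the fact that $\D(0,r\la)$ is the \emph{maximal} disc of univalency. First I would record the elementary reduction: for $\lambda\ne\lambda_0$ one has $\Re\Lambda=q\log|\lambda|>0$ (and $\Re\Lambda\ge0$ on all of $\Mpqr\subset\overline{\M}$), so
$$
\Re\!\Big(\frac1\Lambda\Big)\le\frac1{2r\la}
\iff |\Lambda|^2\ge 2r\la\,\Re\Lambda
\iff |\Lambda-r\la|\ge r\la ,
$$
i.e.\ $\Lambda=\Log\lambda^q$ must lie \emph{outside} the disc of radius $r\la$ centred at $r\la$ (the disc through $0$ and $2r\la$, tangent to $i\R$ at the origin). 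At $\lambda_0=e^{2\pi ip/q}$ both $\Lambda$ and $r\la$ vanish and there is nothing to prove; and $e^{\Lambda}=\lambda^q$ is precisely the multiplier of $f\la$ at $\alpha\la$, linearized by the same $\phi\la$ on $\D(0,r\la)$.

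Next I would pass to logarithmic coordinates at $\alpha\la$, where $f\la$ becomes a near-translation $\zeta\mapsto\zeta+\Lambda+O(e^{\zeta})$ on a left half-plane and $\Psi\la(W):=\log\phi\la(e^{W})=W+O(e^{W})$ conjugates $W\mapsto W+\Lambda$ to it; since $\phi\la$ is univalent exactly on $\D(0,r\la)$, $\Psi\la$ is univalent on $\{\Re W<\log r\la\}$ modulo $W\mapsto W+2\pi i$, a strip which must contain a fundamental domain for $W\mapsto W+\Lambda$ while fitting inside the half-plane $\{\Re W<\log r\la\}$. In the uniformizing coordinate of Section~\ref{between rays} the $q$ external rays landing at $\alpha\la$ with combinatorial rotation number $p/q$ appear as $q$ horizontal lines: $P\la$ acts there by the translation $z\mapsto z+\Log\lambda/\Lambda$, which shifts the imaginary coordinate by $\tfrac{2\pi p}q\Re(1/\Lambda)$ modulo the imaginary period $\Im(2\pi i/\Lambda)=2\pi\Re(1/\Lambda)$, so the $q$ lines are equally spaced at imaginary-distance $\tfrac{2\pi}q\Re(1/\Lambda)$; hence the sector $\tilde W\la$ carrying the critical value is asymptotic near $\alpha\la$ to a horizontal half-strip of height $H_0\la=\tfrac{2\pi}q\Re(1/\Lambda)$.

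It then remains to bound $H_0\la$ from above. The point is maximality of $r\la$: on $|w|=r\la$ the linearizer $\phi\la$ either has a critical point (a preimage of the critical point of $P\la$) or identifies two points, so the repelling region of the near-translation cannot reach past the level $\Re W=\log r\la$; a Grötzsch-type (extremal length) comparison of the half-strip of $q$ sectors with the punctured disc $\D(0,r\la)\setminus\{0\}$ — using $\phi\la'(0)=1$, hence $\D(0,r\la/4)\subset D\la$ — then forces $\tfrac{2\pi}q\Re(1/\Lambda)\le\frac{\pi}{q\,r\la}$, which is exactly $\Re(1/\Lambda)\le 1/(2r\la)$. (Equivalently: the horodisc $|\Lambda-r\la|<r\la$ is, through $\phi\la$, a region of multipliers producing an attracting $q$-cycle, hence it lies over $H_{p/q}$, which $\lambda\in\Mpqr$ avoids.) The statement for $\nu\in\Mppqr$, with $N$ in place of $\Lambda$ and $r\nn$ in place of $r\la$, is proved verbatim. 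I expect this last step to be the real work: one must handle the genuinely curved outer part of the sector (the rays are only asymptotically straight in the uniformizing coordinate) and make the extremal-length comparison uniform down to the degenerate parameter $\lambda_0$, where $r\la$ and $\Lambda$ tend to $0$ simultaneously.
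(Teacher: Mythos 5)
Your M\"obius reduction is exactly right --- $\Re(1/\Lambda)\le \frac{1}{2r}$ is equivalent to $\Lambda\notin\D(r,r)$, the disk of radius $r$ tangent to $i\R$ at $0$ --- and your closing parenthetical (``the horodisc \dots lies over $H_{p/q}$, which $\lambda\in\Mpqr$ avoids'') is essentially the entire content of the paper's proof. But the body of your argument proves the wrong statement, because you have misidentified $r$. In the lemma, $r$ and $\hat r$ are \emph{uniform constants}, not the parameter-dependent radii $r\la$, $r\nn$ of the maximal univalency disks of the linearizers. The paper's proof is soft and purely parameter-space: $H_{p/q}$ has smooth boundary and is tangent to $\partial\D$ at $\lambda_0$, so its image $\widehat H_{p/q}=\Lambda(H_{p/q})$ contains a fixed round disk $\D(r,r)$ tangent to $i\R$ at $0$; since $\lambda\in\Mpqr=\Mpq\setminus H_{p/q}$ and $\Lambda$ is univalent near $\Mpq$, we get $\Lambda\notin\D(r,r)$, and inversion finishes the job. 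No dynamics, no linearizer.

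Concretely, three things go wrong with your route. First, the inequality $\Re(1/\Lambda)\le 1/(2r\la)$ with $r\la$ the linearizer radius is a different and much weaker assertion: as $\lambda\to\lambda_0$ in $\Mpqr$ the fixed point becomes parabolic, $r\la\to 0$, and your right-hand side blows up exactly where the lemma is needed. The lemma is used in Proposition~\ref{2} to obtain the uniform bound $h\la\le\pi/r$, which requires $r$ to be a constant; your version cannot deliver this without an additional uniform lower bound on $r\la$ over $\Mpqr$, which does not exist. Second, the key analytic step --- that maximality of $r\la$ forces $\frac{2\pi}{q}\Re(1/\Lambda)\le\frac{\pi}{q\,r\la}$ via a Gr\"otzsch comparison --- is asserted rather than proved; this is a Yoccoz-inequality--type estimate that is genuinely delicate, and you acknowledge yourself that the degeneration at $\lambda_0$ is unresolved. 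Third, the ``equivalently'' remark conflates two distinct objects: whether $\Lambda$ lies in a horodisc is a condition on the multiplier $\lambda^q$ of the $\alpha$-fixed point of $f\la$, whereas membership in $H_{p/q}$ is a condition on the multiplier $\rho\la$ of the bifurcated $q$-cycle; the bridge between them is precisely the tangency and smoothness of $\partial H_{p/q}$ at $\lambda_0$, i.e.\ the one fact the paper actually invokes and your argument tries to circumvent.
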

\begin{proof}
    Recall that $\Lambda = \Lambda(\lambda)= \Log \lambda^q =q\Log\lambda-2\pi i p$. 
Since $H_{p/q}$ has smooth boundary and is tangent to $\D$ at $\lambda_0$ 
the image $\widehat H_{p/q} = \Lambda(H_{p/q})$ contains a disk $\D(r,r)$ 
tangent to $i\R$ at $0$. 
Moreover the M{\"o}bius transformation $\Lambda\mapsto 1/\Lambda$ maps 
$\D(r,r)$ biholomorphically onto the right half plane $\{x+iy : x > 2r\}$. 
Let $\hat r>0$ be the similar radius for $H_{p'/q}$.
This proves the Lemma.
\end{proof}

\begin{prop}\label{2}
 There exist positive constants $K_1$ and $K_2$ such that, for all $\lambda \in \Mpqr$,
$$K_1<\frac{\partial v\la}{\partial y}<K_2.$$
\end{prop}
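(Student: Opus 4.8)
The plan is to recall that $v\la = y\,h\nn/h\la$, so that $\partial v\la/\partial y = h\nn/h\la$ depends only on the moduli of the two straight cylinders, and the whole proposition reduces to showing that the ratio $h\nn/h\la$ is bounded away from $0$ and $\infty$ uniformly in $\lambda \in \Mpqr$ (recall $\nu = \nu(\lambda)$ is the corresponding parameter in $\Mppqr$). First I would show that each modulus $h_\iln$ is uniformly bounded \emph{below}: the strip $\tilde W_\iln$ contains, by construction, the segments of $\tilde R_\iln$ and $\tilde R'_\iln$ that come from the portions of the external rays lying inside $D_\iln$, hence $\tilde W_\iln/\Z$ contains a definite round annulus (coming from the annular gap between the two ray-segments in the linearizing coordinate, pushed forward by $\phi_\iln\circ\exp$), and modulus is monotone under inclusion. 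The point is that the equipotential $E\la$ was chosen precisely so that the relevant ray-pieces sit inside the domain of univalence $D_\iln$, and Lemma~\ref{1sulambda} together with the fact that $\Mpq$ is compact gives a lower bound on $r$ (resp. $\hat r$), hence a lower bound on how ``fat'' that annulus is, uniformly in $\lambda$.

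Next I would bound each $h_\iln$ uniformly \emph{above}. The cleanest route is to note that $\tilde W_\iln$ is the universal-cover picture of the annular region in dynamical space between the two external rays $R_\iln$ and $R'_\iln$, cut off by the equipotential $E\la$ on one side and running out to $\alpha_\iln$ (an endpoint of both rays) on the other; in B\"ottcher/linearizer coordinates this region has a modulus that can be estimated from above by comparison with a fixed reference configuration, because both external rays land at the common repelling fixed point $\alpha_\iln$ at a definite angle and the equipotential is at a bounded B\"ottcher potential. Concretely, the strip $\tilde W\la$ injects (via $\psi\la^{-1}$ followed by the linearizing chart) into a region whose modulus is controlled by the angular separation of $\hat R_1,\hat R_1'$ after normalization, and this separation, being the difference of two fixed external angles, is a fixed positive number independent of $\lambda$; combined with Gr\"otzsch-type inequalities this caps $h\la$. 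A symmetric argument caps $h\nn$.

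Having both $h\la$ and $h\nn$ pinched between two positive constants uniformly over the (compact) parameter sets, the ratio $h\nn/h\la$ lies in a fixed interval $[K_1,K_2]\subset(0,\infty)$, which is exactly the claim. The main obstacle I expect is the uniform \emph{upper} bound on the moduli: one must make sure that as $\lambda$ approaches the root $\lambda_0=e^{2\pi i p/q}$ the linearizer $\phi\la$ does not degenerate in a way that inflates $h\la$ — here the key input is precisely Lemma~\ref{1sulambda}, which controls $\Re(1/\Lambda)$ and hence keeps the linearizing coordinate's domain of univalence from collapsing, so that the annulus between the two rays stays within a region of bounded modulus. The lower bound and the reduction to moduli are routine; the care is entirely in the boundedness of the linearizer geometry near the root, and that is supplied by the preceding lemma and the compactness of $\Mpq$ and $\Mppq$.
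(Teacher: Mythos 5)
Your reduction of the proposition to showing that $h\la$ and $h\nn$ are each pinched between positive constants uniform in $\lambda$ is exactly the paper's first step, and you correctly identify that the delicate point is the degeneration as $\lambda$ approaches the root. But the mechanisms you propose for the two bounds would not deliver them, and neither is the one the paper uses. For the \emph{upper} bound the paper's tool is Bers' inequality: $\tilde W\la/\Z$ is a cylinder embedded in the quotient torus $D\la\setminus\{\alpha\la\}/\lambda^q\cong\C/(\Lambda\Z+2\pi i\Z)$ in the homotopy class of $\Lambda$, whence $h\la\le 2\pi\Re(\Lambda)/|\Lambda|^2=2\pi\Re(1/\Lambda)$, and Lemma~\ref{1sulambda} then caps this by $\pi/r$. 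Your substitute --- that the angular separation of the external angles of $R\la$ and $R'\la$ in B\"ottcher coordinates is a fixed positive number, so Gr\"otzsch-type inequalities cap $h\la$ --- has no force: $h\la$ is measured in the linearizing coordinate at $\alpha\la$, and the relation between the B\"ottcher chart and the linearizing chart degenerates as $\lambda\to\lambda_0$; a fixed external-angle gap gives no a priori control of the modulus of the cylinder in the quotient torus, whose lattice itself collapses ($\Lambda\to 0$). You cite the right lemma, but without the embedding into the torus and Bers' inequality there is no bridge from a bound on $\Re(1/\Lambda)$ to a bound on $h\la$.

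For the \emph{lower} bound, your ``definite round annulus in the gap between the two ray-segments'' is asserted rather than proved, and your appeal to Lemma~\ref{1sulambda} here is backwards: that lemma is an \emph{upper} bound on $\Re(1/\Lambda)$, with $r$ and $\hat r$ fixed geometric constants (not quantities one bounds from below), and it plays no role in the lower bound. The paper instead observes that the cylinder $\tilde W\la/\Z$ contains $q-1$ of the $q$ annuli cut out on the quotient torus by the basin of infinity, each of definite modulus, so that superadditivity of moduli gives $h\la\ge \frac{(q-1)\pi}{q^2\Log 2}$, and likewise for $h\nn$. Finally, be wary of invoking ``compactness of $\Mpq$'': the root $\lambda_0$ lies in $\Mpqr$ and the linearizer does not exist there, so no continuity-over-a-compact-set argument is available; every bound must be produced uniformly by hand, which is precisely what the Bers and superadditivity arguments accomplish.
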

\begin{proof}
We have to prove the existence of positive constants $K_1$ and $K_2$ such that, for all $\lambda \in \Mpqr$, $K_1<\frac{h\la}{h\nn}<K_2.$
By construction, $h\la$ is the modulus of $\tilde W\la/\Z$, hence the modulus 
 of a cylinder embedded in the complex torus 
$$D\la \setminus \{\alpha\la\}/\lambda^q \approx \C/ (\Lambda \Z + 2\pi i \Z),$$ 
homotopically to $\Lambda$, so by Bers inequality we have:
$$h\la \leq \frac{2\pi \Re(\Lambda)}{|\Lambda|^2}= 2\pi \Re(1/\Lambda). $$
Also by construction, this cylinder contains $q-1$ of the $q$ annuli corresponding to connected components of the basin of attraction of $\infty$ in the complex torus $D\la \setminus \{\alpha\la\}/\lambda^q$ (to be precise, the $q-2$ plus $2$ $q/2$ connected components). Hence:
$$
\frac{(q-1)\pi}{q^2\Log 2}\leq h\la \leq \frac{2\pi \Re(\Lambda)}{|\Lambda|^2} 
= 2\pi \Re(1/\Lambda). 
$$

Combining with Lemma~\ref{1sulambda} we obtain 
$$
\frac{(q-1)\pi}{q^2\Log 2}\leq h\la \leq \pi/r.
$$ 
And similarly, 
$$
\frac{(q-1)\pi}{q^2\Log 2}\leq h\nn\leq \pi/\hat r.
$$ 
\end{proof}
\subsubsection{Bounding $\frac{\partial u\la}{\partial x}$}
 Let $U$ be the horizontal infinite strip of hight $\frac{\pi}{q\Log 2}$ and symmetric with respect to the real axis. There is a conformal map: $$p\la^u: U \rightarrow \C\setminus \overline\D$$ mapping the real line to the ray $R_1'$ and conjugating the map $T_1(z)=z+1$ acting on $U$ to the map $z \rightarrow z^{2^q}$ acting on $\C\setminus \overline\D$. Let $i= \lambda, \nu$, let $B_\iln$ be the B\"ottcher map, and 
$$\tilde\zeta\la^u:U \rightarrow \C,$$
be a lift of $B\la^{-1} \circ p\la^u:U \rightarrow D\la$ to  $\phi\la\circ \exp(\Lambda z+c)$, mapping zero to $\tilde z\la'$ (see Figure \ref{fig:decomposition}). Set $$\tilde U\la^u=\tilde \zeta\la^u(U).$$
\begin{figure}[tbh]
    \centering
    \includegraphics[scale = 0.6]{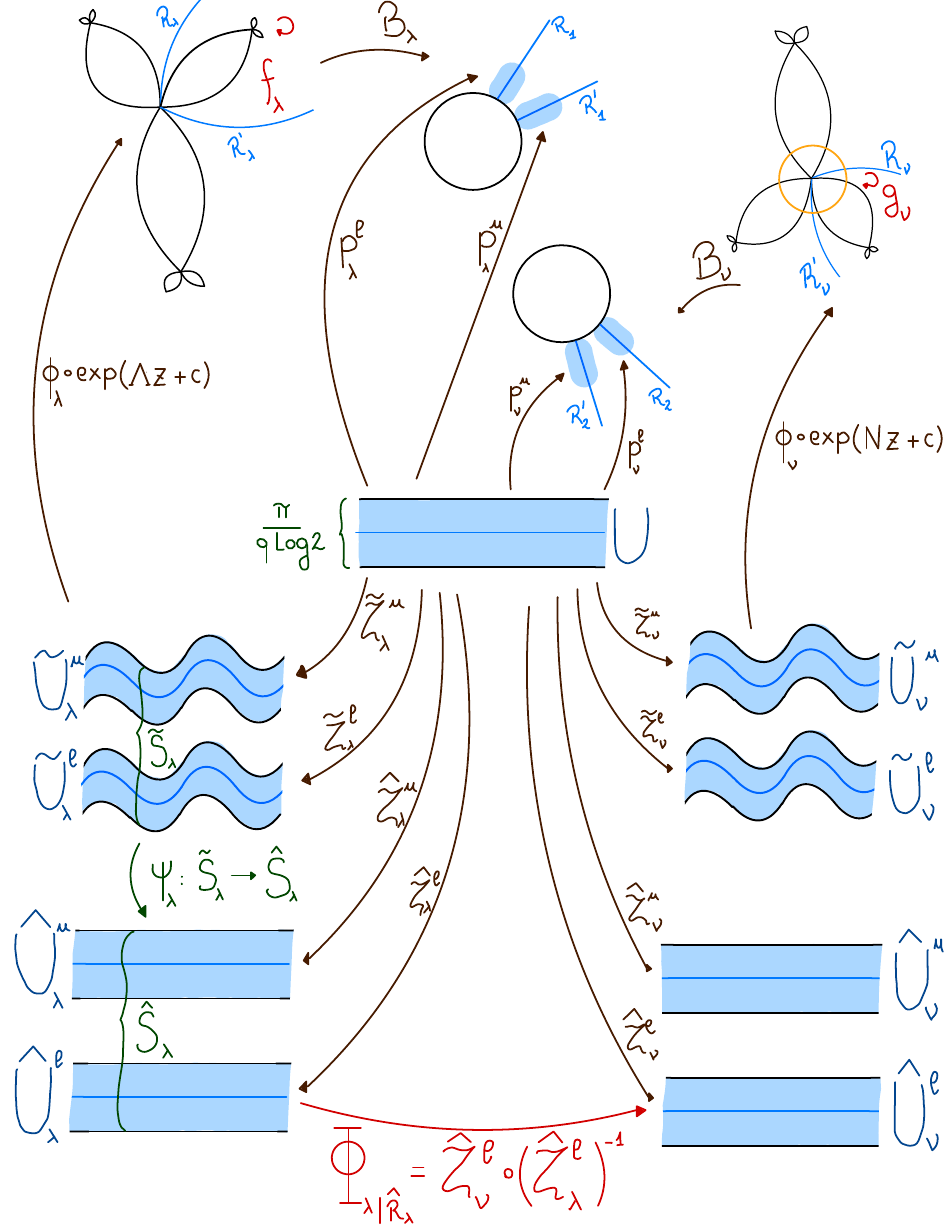} 
    \caption{The maps $\tilde\zeta\la^u$, $\tilde\zeta\la^l$,$\hat\zeta\la^u$, and $\hat\zeta\la^l$.}
    \label{fig:decomposition}
\end{figure}
Then the map $\tilde \zeta\la^u$ conjugates conformally the map $T_1$ acting on $U$ to the map $T_1$ acting on $\tilde U\la^u$. 
Define similarly the map $p\la^l: U \rightarrow \C\setminus \overline \D$, 
let $$\tilde\zeta\la^l:U \rightarrow \C,$$
be a lift of $B\la^{-1} \circ p\la^l:U \rightarrow D\la$ to  $\phi\la\circ \exp(\Lambda z+c)$ fixing zero, and set 
$\tilde U\la^l=\tilde \zeta\la^l(U)$ (see Figure \ref{fig:decomposition}). 
Set $$U_-=\{z= x+iy \in U \,\,|\,\, y \leq 0\},$$ then  $\tilde \zeta\la^u(\mathring U_-)=\tilde U\la \cap \tilde W$, and the map
$$\hat\zeta\la^{u,-}:= \psi\la \circ \tilde \zeta^u_{\lambda|U_-}:U_- \rightarrow \C$$
is a lift of $B\la^{-1} \circ p\la^u:U_- \rightarrow D\la$ to  $\eta\la=\phi\la\circ \exp(\Lambda z+c)\circ \psi\la^{-1}$ mapping $0$ to $\hat z\la'$ (see Figure \ref{fig:decomposition}). Note that $$\hat U\la^{u,-}:=\zeta\la^u(U_-)=\psi\la(\tilde U\la \cap \tilde W\la) \subset \hat W\la.$$  
Define $\hat U^{u,+}$ as the reflection of $\hat U\la^{u,-}$ with respect to the (straight) ray $\hat R\la'$, and set $\hat U^u:= \hat U^{u,+} \cup \hat R\la' \cup \hat U^{u,-}$. Then we can extend the map $\hat\zeta\la^{u,-}:U_- \rightarrow \C$ to a univalent map: $$\hat\zeta\la^{u}:U \rightarrow \hat U\la^u$$ by using reflection with respect to $\R$ and to $\hat R\la'$. 
Similarly, let $U_+=\{z= x+iy \in U \,\,|\,\, y \geq 0\}$, then  
$$\hat\zeta\la^{l,+}:= \psi\la \circ \tilde \zeta^l_{\lambda|U_+}:U_+ \rightarrow \C$$
is a lift of $B\la^{-1} \circ p\la^l:U_+ \rightarrow D\la$ to  $\eta\la=\phi\la\circ \exp(\Lambda z+c)\circ \psi\la^{-1}$ fixing zero, which we can extend to  a univalent map: $$\hat\zeta\la^{l}:U \rightarrow \hat U\la^l,$$
where $\hat U\la^l$ is the union of $\hat\zeta\la^{l,+}(U_+)$, its reflection with respect to the straight ray $\hat R\la$, and the ray itself, as above.
Set
$$\tilde S\la:= \tilde U\la^u \cup \tilde W\la \cup \tilde U\la^l,$$
then the map $\psi\la: \tilde W\la \rightarrow \hat W\la$ extends to $$\psi\la:\tilde S\la \rightarrow \hat S\la:= \hat U\la^u \cup \hat W\la \cup \hat U\la^l,$$ defining $\psi\la(\tilde U\la):= \hat \zeta\la \circ \tilde \zeta\la^{-1}(\tilde U\la)$ (see Figure \ref{fig:decomposition}).
As $\psi\la$ is holomorphic and $\tilde S\la$ is simply connected, this extension agrees with the extension about $\tilde R\la$ and $\tilde R'\la$ previously defined.
Note that, for every $\lambda\in \Mpqr$, the infinite  horizontal strip $\hat S\la$ has $mod(\hat S\la/\Z) \geq h\la+\frac{\pi}{q\Log 2}$.\\

Repeat for $\nu$ (see Figure \ref{fig:decomposition}). More precisely, let $\tilde\zeta\nn^u:U \rightarrow \C,$
be a lift of $B\nn^{-1} \circ p\nn^u:U \rightarrow D\nn$ to  $\phi\nn\circ \exp(N z+c)$ mapping $0$ to $\tilde z\nn'$, and set 
$\tilde U\nn^u=\tilde \zeta\nn^u(U)$. Similarly, let $\tilde\zeta\nn^l:U \rightarrow \C,$
be a lift of $B\nn^{-1} \circ p\nn^l:U \rightarrow D\nn$ to  $\phi\nn\circ \exp(N z+c)$ fixing zero, and set 
$\tilde U\nn^l=\tilde \zeta\nn^l(U)$.
Define $\tilde S\nn:= \tilde U\nn^u \cup \tilde W\nn \cup \tilde U\nn^l$.
Let $\hat\zeta\nn^{u,-}:= \psi\nn \circ \tilde \zeta^u_{\nu|U_-}:U_- \rightarrow \C$ be a lift of $B\nn^{-1} \circ p\nn^u:U \rightarrow D\nn$ to  $\eta\nn$ mapping zero to $\hat z\nn'$, and
construct $\hat\zeta\nn^u:U \rightarrow \C$  by reflecting the map $\hat\zeta\nn^{u,-}$. 
Then $\hat U\nn^u:=\tilde \zeta\nn^u(U)=\psi\nn (\tilde U\nn^u)$. Construct similarly $\hat\zeta\nn^l:U \rightarrow \C$, so $\hat U\nn^l:=\tilde \zeta\nn^l(U)=\psi\nn (\tilde U\nn^l)$.
Set $\hat S\nn:= \hat U\nn^u \cup \hat W\nn \cup \hat U\nn^l.$
Then the map $\psi\nn: \tilde W\nn \rightarrow \hat W\nn$ extends to $\psi\nn:\tilde S\nn \rightarrow \hat S\nn$ and, 
 for every $\nu \in \Mppqr$, the  infinite 
 horizontal strip  $\hat S\nn$ has $mod(\hat S\nn/\Z)=h\nn+\frac{\pi}{q\Log 2}$.

\begin{lemma}
Set $\varphi\la:=\psi\la^{-1}:\hat S\la\rightarrow \tilde S\la$. For for $\lambda \in \Mpqr$, the family of maps $\varphi\la$, is compact: for every sequence $\lambda_n$ there is a subsequence $\lambda_{n_k}$ such that $\varphi_{\lambda_{n_k}} \rightarrow \hat \varphi$, and $ \hat \varphi: \hat S \rightarrow \tilde S$ is a univalent map.
Similarly, setting $\varphi\nn:=\psi\nn^{-1}:\hat S\nn\rightarrow \tilde S\nn$, the family of maps $\varphi\nn$ is a compact family.
\end{lemma}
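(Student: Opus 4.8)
The plan is a normal--families argument for the uniformizing maps $\psi\la$ (equivalently for $\varphi\la=\psi\la^{-1}$), the essential uniform input being the modulus bounds of Lemma~\ref{1sulambda} and Proposition~\ref{2}. I will treat $\varphi\la$, $\lambda\in\Mpqr$; the case of $\varphi\nn$, $\nu\in\Mppqr$, is identical. I would begin by noting that $\Mpqr=\Mpq\setminus H_{p/q}$ is compact (the intersection of the compact set $\Mpq$ with the closed complement of $H_{p/q}$), so it is enough to extract a convergent subsequence from an arbitrary sequence $\lambda_n\in\Mpqr$. Since $\psi\la\colon\tilde S\la\to\hat S\la$ is conformal, fixes $0$ and commutes with $T_1$, so is $\varphi\la$, and hence $\varphi\la$ descends to a conformal isomorphism of cylinders $\underline\varphi\la\colon\hat S\la/\Z\to\tilde S\la/\Z$ carrying $[0]$ to $[0]$. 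As such an isomorphism is determined, up to a rotation killed by the marking, by the two cylinders and the marked points, the whole problem reduces to showing that, as $\lambda$ runs over $\Mpqr$, the marked cylinders $(\hat S\la/\Z,[0])$ and $(\tilde S\la/\Z,[0])$ stay in a compact part of the moduli space of once--marked cylinders; precompactness of $\{\underline\varphi\la\}$ (locally uniform convergence on kernels), and then of $\{\varphi\la\}$ after lifting, will follow from the continuity of uniformization.

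Next I would verify this compactness. By Proposition~\ref{2} one has $\tfrac{(q-1)\pi}{q^2\Log2}\le h\la\le\pi/r$, and the four collars $\hat U\la^{\,l},\hat U\la^{\,u},\tilde U\la^{\,l},\tilde U\la^{\,u}$ are uniformizations of fixed model pieces (finite cyclic covers of the fixed model strip $U$ of height $\pi/q\Log2$), so each contributes a fixed, nonzero, finite amount to the modulus of the relevant cylinder; together with the inclusions $\hat U\la^{\,l}\subset\{|\Im z|<h\la\}$ and $\hat U\la^{\,u}\subset\{0<\Im z<2h\la\}$ (whence $\hat S\la\subset\{-h\la<\Im z<2h\la\}$) this gives constants $0<m_1\le m_2<\infty$, independent of $\lambda$, with $\operatorname{mod}(\hat S\la/\Z)\in[m_1,m_2]$, and hence $\operatorname{mod}(\tilde S\la/\Z)\in[m_1,m_2]$ as well, the two cylinders being isomorphic. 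For the marked point: $[0]$ lies in the bottom collar $\hat U\la^{\,l}/\Z$, which is symmetric about $\R/\Z$ and hence has $\R/\Z$ as its core; being on this symmetry circle, $[0]$ is at modular distance at least half the (fixed) modulus of $\hat U\la^{\,l}/\Z$ from either boundary of $\hat U\la^{\,l}/\Z$, hence from either end of $\hat S\la/\Z$. The analogous fact for $[0]\in\tilde S\la/\Z$ follows from the fixed--model structure of $\tilde U\la^{\,l}/\Z$ (the point $0$ is the image of the core point of $U$ under the intertwining uniformization $\tilde\zeta\la^{\,l}$). Thus $(\hat S\la/\Z,[0])$ and $(\tilde S\la/\Z,[0])$ stay in a fixed compact set.

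With this, given $\lambda_n\in\Mpqr$ I would pass to a subsequence along which $h_{\lambda_n}$ converges and both sequences of marked cylinders converge to genuine marked cylinders $(\hat{\mathcal A},[0])$, $(\tilde{\mathcal A},[0])$; correspondingly $\hat S_{\lambda_n}\to\hat S$ and $\tilde S_{\lambda_n}\to\tilde S$ in the Carath\'eodory kernel sense with base point $0$, the kernels being genuine $\Z$--invariant domains (non--degenerate and $\ne\C$), and $\underline\varphi_{\lambda_n}\to\underline\varphi$ locally uniformly, $\underline\varphi$ being the conformal isomorphism $\hat{\mathcal A}\to\tilde{\mathcal A}$ fixed by the markings. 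Lifting, $\varphi_{\lambda_n}\to\hat\varphi$ locally uniformly on $\hat S$, where $\hat\varphi$ is the lift of $\underline\varphi$ with $\hat\varphi(0)=0$; being a lift of a conformal isomorphism of cylinders to the universal covers, $\hat\varphi$ is itself a conformal isomorphism $\hat S\to\tilde S$, in particular univalent, and by the Carath\'eodory kernel theorem $\hat\varphi(\hat S)=\tilde S$. The statement for $\varphi\nn$ follows verbatim with Lemma~\ref{1sulambda} and Proposition~\ref{2} used on the $p'/q$ side.

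The main obstacle is the \emph{uniform} non--degeneracy of these data as $\lambda$ ranges over all of $\Mpqr$ — crucially, across the parabolic root $\lambda_0=e^{2\pi i p/q}$, where the individual charts $\phi\la$, $\exp(\Lambda z+c)$ and the B\"ottcher maps $B\la$ all degenerate ($\Lambda\to0$ and $0$ becomes a parabolic fixed point of $P\la$), so that none of these can be controlled on its own. The argument goes through because only \emph{moduli} enter, and Lemma~\ref{1sulambda} together with Proposition~\ref{2} are precisely what keep $h\la$ — hence $\operatorname{mod}(\hat S\la/\Z)$ and $\operatorname{mod}(\tilde S\la/\Z)$ — pinched between two positive constants through the parabolic limit, while the collar contributions are fixed by construction. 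The one step requiring real care is to make rigorous, uniformly and up to $\lambda_0$, that the marked point $[0]$ stays at a definite modular distance from both ends of the cylinder; above this is read off from its lying on the core of the bottom collar of fixed modulus, but pinning it down is a short collar/Gr\"otzsch estimate for the symmetric annuli $\hat U\la^{\,l}/\Z$, $\tilde U\la^{\,l}/\Z$ (equivalently, a uniform bound on the distortion of the intertwining maps $\hat\zeta\la^{\,l},\tilde\zeta\la^{\,l}$ near $\R$).
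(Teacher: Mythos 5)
Your proof is correct, but it takes a genuinely different and heavier route than the paper's. The paper disposes of the lemma in one line: each $\varphi\la$ is univalent on an infinite horizontal strip of uniformly bounded height, fixes $0$ and commutes with $T_1$, hence fixes every integer; a family of univalent maps on such domains fixing all integers is normal, and any limit, fixing infinitely many distinct points, is non-constant and therefore univalent. You instead descend to the quotient cylinders $\hat S\la/\Z\to\tilde S\la/\Z$, show the once-marked cylinders stay in a compact part of moduli space, and invoke continuity of uniformization plus Carath\'eodory kernel convergence. Both arguments rest on the same uniform input (the bounds $\frac{(q-1)\pi}{q^2\Log 2}\le h\la\le \pi/r$ from Proposition~\ref{2} and Lemma~\ref{1sulambda}, plus the fixed-model collars); what yours buys is an explicit treatment of the convergence of the varying domains $\hat S\la$, $\tilde S\la$, which the paper leaves implicit, at the cost of extra machinery. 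One caveat: your asserted containments $\hat U\la^{\,l}\subset\{|\Im z|<h\la\}$ and $\hat U\la^{\,u}\subset\{0<\Im z<2h\la\}$ do not follow from modulus information alone (a $T_1$-invariant domain of small modulus can still have large vertical extent); the uniform height bound on the collars, and hence on $\hat S\la$, really comes from the derivative bounds of Lemma~\ref{cpthatzeta} via Koebe distortion, which is also what guarantees that all the domains contain a fixed neighbourhood of $\R$ so that the kernels are non-degenerate. Since you flag exactly this point as the step needing care, the gap is acknowledged rather than overlooked, but it should be closed by citing Lemma~\ref{cpthatzeta} rather than by a modulus count.
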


\begin{proof}
 The family $\varphi\la:\hat S\la\rightarrow \tilde S\la$ for $\lambda \in \Mpqr$ is a family of univalent maps defined on and 
 infinite straight horizontal strip  $\hat S\la$  with uniformly bounded hight, fixing the origin and commuting with translation by one. Hence the family $\varphi\la$ fixes infinite points: every integer number. So, it is a compact family. 
The same argument shows that the family of maps $\varphi\nn$ is a compact family.
\end{proof}
\begin{lemma}\label{b}
  There exists $K_q>0$ such that, for all $w \in \widehat W_\iln$:
 $$|\varphi_\iln(w)-w|<K_q$$
\end{lemma}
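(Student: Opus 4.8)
The plan is to run a normal families argument. First I would reduce, using periodicity, to a uniform bound on a single fundamental rectangle, and then obtain that bound by contradiction from the compactness of the family $\{\varphi\la\}$ just established, together with the two-sided control of the moduli $h_\iln$ from Proposition~\ref{2}.

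In detail: since both $\varphi_\iln$ and $\mathrm{id}$ commute with $T_1(w)=w+1$, the difference $G_\iln:=\varphi_\iln-\mathrm{id}$ is holomorphic on $\hat S_\iln$ (in particular on $\hat W_\iln$), is $1$-periodic, and satisfies $G_\iln(0)=0$ because $\varphi_\iln=\psi_\iln^{-1}$ fixes $0$; hence $G_\iln$ descends to a holomorphic function on the cylinder $\hat W_\iln/\Z$. To prove $|G_\iln|\le K_q$ on $\hat W_\iln$ it is therefore enough to bound $|G_\iln|$ on the fundamental rectangle $Q_\iln:=\overline{\hat W_\iln}\cap\{0\le\Re w\le 1\}$. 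By Proposition~\ref{2} one has $\tfrac{(q-1)\pi}{q^2\Log 2}\le h_\iln\le\pi/r$ (and $\le\pi/\hat r$ for the $\nu$-family), so every $Q_\iln$ sits inside the fixed compact rectangle $Q:=[0,1]\times[0,\pi/r]$; moreover $\overline{\hat W_\iln}\subset\hat S_\iln$, with the collars of $\hat S_\iln$ lying across the two boundary lines $\hat R_\iln$ and $\hat R'_\iln$ of $\hat W_\iln$ (the pieces $\hat U^{u,\pm}_\iln$ and $\hat U^{l,\pm}_\iln$, which are conformal copies of the fixed half-strips $U_-,U_+$) having modulus at least $\tfrac{\pi}{2q\Log 2}$.

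I would then argue by contradiction: if $\sup_\iln\sup_{Q_\iln}|G_\iln|=\infty$, pick parameters $\iln_n$ and points $w_n\in Q_{\iln_n}$ with $|G_{\iln_n}(w_n)|\to\infty$. After passing to subsequences one may assume $w_n\to w_\ast\in Q$, $h_{\iln_n}\to h_\ast\in[\tfrac{(q-1)\pi}{q^2\Log 2},\pi/r]$, and — by the compactness Lemma just proved — $\varphi_{\iln_n}\to\hat\varphi$ locally uniformly, with $\hat\varphi\colon\hat S\to\tilde S$ univalent on the limit strip $\hat S$. By the uniform collar bound above, $\hat S$ still contains the closed strip $\{0\le\Im w\le h_\ast\}\supset Q$, so $w_\ast$ is an interior point of $\hat S$ and $\hat\varphi(w_\ast)$ is finite; uniform convergence near $w_\ast$ and continuity of $\hat\varphi$ then give $\varphi_{\iln_n}(w_n)\to\hat\varphi(w_\ast)$, whence
$$G_{\iln_n}(w_n)=\varphi_{\iln_n}(w_n)-w_n\longrightarrow\hat\varphi(w_\ast)-w_\ast,$$
a finite limit, contradicting $|G_{\iln_n}(w_n)|\to\infty$. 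Thus $K_q:=\sup_\iln\sup_{Q_\iln}|G_\iln|<\infty$, and by $1$-periodicity $|\varphi_\iln(w)-w|\le K_q$ for all $w\in\hat W_\iln$ and all $\iln$; the $\nu$-family is identical. (An alternative to the contradiction step: apply the maximum modulus principle on the closed annulus $\overline{\hat W_\iln}/\Z$ to reduce to bounding $|G_\iln|$ on the two boundary lines $\hat R_\iln\subset\R$ and $\hat R'_\iln\subset\R+ih_\iln$, where $\varphi_\iln$ takes values on the curves $\tilde R_\iln$, $\tilde R'_\iln$, and then bound those directly.)

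The hard part — or rather the one point needing care — is precisely the non-degeneracy claim: that along the extracted subsequence nothing collapses, so that $\hat\varphi$ is univalent on a genuine strip strictly containing $Q$ and $w_\ast$ cannot escape to its boundary. This is exactly where the hypotheses are used: the lower bound $h_\iln\ge\tfrac{(q-1)\pi}{q^2\Log 2}$ of Proposition~\ref{2} prevents $\hat W_\iln$ from shrinking to a segment, and the uniform moduli of the collars $\hat U^{u,\pm}_\iln,\hat U^{l,\pm}_\iln$ keep $\overline{\hat W_\iln}$ well inside $\hat S_\iln$ in the limit. Everything else is a routine normal families extraction.
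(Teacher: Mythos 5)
Your argument is correct and is essentially the paper's own proof: the paper disposes of this lemma in one line by invoking periodicity under $T_1$ together with the compactness of the families $\varphi_\lambda$, $\varphi_\nu$ from the preceding lemma, which is exactly the normal-families-plus-fundamental-domain argument you spell out. Your additional care about the uniform bounds on $h_\iln$ and the collar of $\hat S_\iln$ keeping $\overline{\hat W_\iln}$ away from $\partial\hat S_\iln$ is a legitimate filling-in of details the paper leaves implicit.
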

\begin{proof}
 Since the map $\varphi_\iln$ commutes with translation by one, and the families $\varphi_{\lambda}$ and $\varphi_{\nu}$ are compact by the previous Lemma, the result follows.
\end{proof}

\begin{lemma}\label{cpthatzeta}
There exists a constant $K=K(\frac{\pi}{q\Log 2})>0$ such that, for all  $\lambda \in \Mpqr$, and $\nu=\xi (\lambda)$, 
$$ \frac{1}{K}< (\hat \zeta\la^l)' < K,\,\, \frac{1}{K}< (\hat \zeta\la^u)' < K,\,\, \frac{1}{K}< (\hat \zeta\nn^l)' < K, \mbox{ and }  \frac{1}{K}< (\hat \zeta\nn^u)' < K.$$
\end{lemma}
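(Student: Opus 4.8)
The plan is to derive the bounds on $(\hat\zeta_\iln^u)'$ and $(\hat\zeta_\iln^l)'$ from compactness of the relevant families together with the explicit normalizations built into the construction. Recall that $\hat\zeta_\la^u : U \to \hat U_\la^u$ is a univalent map defined on the fixed horizontal strip $U$ of height $\frac{\pi}{q\Log 2}$, obtained by reflecting the map $\hat\zeta_\la^{u,-} = \psi_\la\circ\tilde\zeta_{\lambda|U_-}^u$ across $\R$ and across $\hat R_\la'$, and it conjugates $T_1(z)=z+1$ on $U$ to $T_1$ on $\hat U_\la^u$ (since both $\psi_\la$ and $\tilde\zeta_\la^u$ do). In particular $\hat\zeta_\la^u$ commutes with translation by one, so $(\hat\zeta_\la^u)'$ is a $1$-periodic nonvanishing holomorphic function on $U$, hence determined by its values on the fundamental domain $\{0\le \Re z < 1\}\cap U$, which has compact closure in $\C$. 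Thus it suffices to bound $(\hat\zeta_\la^u)'$ on a fixed compact set uniformly in $\lambda$, and this will follow once we show the family $\{\hat\zeta_\la^u\}_{\lambda\in\Mpqr}$ (and its three analogues) is a compact family of univalent maps $U\to\C$ with a common normalization point.

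First I would establish compactness. Write $\hat\zeta_\la^u = \psi_\la\circ\tilde\zeta_\la^u$ on $U_-$ and note $\tilde\zeta_\la^u$ is a lift of $B_\la^{-1}\circ p_\la^u : U \to D_\la$ through $\phi_\la\circ\exp(\Lambda z + c)$, so it is univalent on $U$ and maps $0$ to $\tilde z_\la'$; composing with $\psi_\la$ (which by the previous lemmas extends univalently to $\tilde S_\la$ and sends $\tilde z_\la'$ to $\hat z_\la'$) gives a univalent map sending $0$ to $\hat z_\la'$, and the reflection step keeps it univalent on all of $U$ with the same value at $0$. The images $\hat U_\la^u$ are infinite horizontal strips of height between $\frac{\pi}{q\Log 2}$ and $2h_\la + \frac{\pi}{q\Log 2}$, hence of uniformly bounded modulus by Proposition~\ref{2}, and the normalization point $\hat z_\la'$ lies in a bounded region (it is $\psi_\la(\tilde z_\la') \in \overline{\hat W_\la}$, which is controlled since $\hat z_\la'$ is a bounded distance from $0$ by the compactness established in the preceding lemmas and Lemma~\ref{b}). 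A normal-families argument then yields, along any sequence $\lambda_n$, a subsequence with $\hat\zeta_{\lambda_{n_k}}^u \to \hat\zeta$ locally uniformly, with $\hat\zeta$ univalent (the limit cannot be constant because $\mathrm{mod}$ of the image strip is bounded below by $\frac{\pi}{q\Log 2} > 0$, by the same argument used for Proposition~\ref{2} and $\mathrm{mod}(\hat S_\la/\Z)$). The same reasoning applies to $\hat\zeta_\la^l$, $\hat\zeta_\nn^u$, $\hat\zeta_\nn^l$, using that $\nu = \xi(\lambda)$ ranges over $\Mppqr$.

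Now combine. On the fundamental rectangle $Q_0 = \overline{\{0\le\Re z\le 1\}}\cap U$, which is compact, the family of univalent maps $\hat\zeta_\la^u$ is precompact, so $(\hat\zeta_\la^u)'$ ranges over a precompact family of nonvanishing holomorphic functions on a neighbourhood of $Q_0$ (derivatives of a precompact family of univalent maps are precompact and, being derivatives of injective maps, are zero-free; the uniform limit of zero-free holomorphic functions is zero-free or identically zero, and it cannot vanish identically since the limit map is univalent hence non-constant). By compactness, $|(\hat\zeta_\la^u)'|$ and $1/|(\hat\zeta_\la^u)'|$ are bounded above on $Q_0$ uniformly in $\lambda$; periodicity of $(\hat\zeta_\la^u)'$ propagates these bounds to all of $U$. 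Taking $K$ to be the maximum of the four resulting constants (for $u,l$ and $\lambda,\nu$) — which depends only on $\frac{\pi}{q\Log 2}$ since that is the only surviving geometric datum, the height of $U$ — gives $\frac1K < (\hat\zeta_\la^l)' < K$ and the three analogues, as claimed.

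The main obstacle is verifying non-degeneracy of the limits, i.e.\ that no limiting map collapses to a constant: this is what forces the lower bound $1/K$ and cannot be read off purely formally. Here one must use the height lower bound $\frac{\pi}{q\Log 2}$ of the image strips (equivalently $\mathrm{mod}(\hat U_\la^u/\Z) \ge \frac{\pi}{q\Log 2}$), which is baked into the construction of $p_\la^u$ via the explicit conjugacy to $z\mapsto z^{2^q}$, together with the fact that a univalent self-conjugacy of $T_1$ between two strips of comparable finite modulus is, up to the bounded translation controlled by Lemma~\ref{b}, close to the identity. Once this is in hand the derivative bounds are immediate from the Koebe distortion estimates applied on the compact fundamental domain.
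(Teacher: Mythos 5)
Your route---normal-families compactness of $\{\hat \zeta\la^u\}_{\lambda\in\Mpqr}$ and its three analogues, followed by Hurwitz to get two-sided derivative bounds on a fundamental domain---is genuinely different from the paper's proof, which uses no compactness at all. The paper applies the Koebe distortion theorem to the \emph{entire class} $\Sigma(h)$ of univalent maps of the strip $U_h$ commuting with $T_1$, obtaining a uniform bound on $|\varphi'(z)/\varphi'(w)|$ for $z,w\in\R$, and then pins down the absolute scale of $\varphi'$ by the identity $\int_{x-1/2}^{x+1/2}\varphi'(t)\,dt=\varphi(x+\tfrac12)-\varphi(x-\tfrac12)=1$, forced by $T_1$-equivariance. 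That argument is pointwise, needs no normalization point, and yields a constant depending only on the height $\frac{\pi}{q\Log 2}$, which is exactly what the lemma asserts; a compactness argument over the specific family would a priori give a constant depending on the whole family, so your closing claim that only the height survives is not automatic in your set-up.

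There is also one concretely false step: the assertion that the normalization point $\hat z_{\lambda}'$ "lies in a bounded region \dots a bounded distance from $0$" by Lemma \ref{b}. Lemma \ref{b} only says that $\psi_{\lambda}$ is within bounded distance of the identity; it does not bound $\tilde z_{\lambda}'$ itself, and in fact $\Re(\hat z_{\lambda}')$ is \emph{unbounded} on $\Mpqr$: it lies within bounded distance of $\Re(\tilde y_{\lambda})=\frac{j}{q}-\frac{2\pi(q-1)}{q}\Im(1/\Lambda)$, and $\Im(1/\Lambda)$ blows up as $\lambda$ tends to the root $\lambda_0$. (This is precisely why Proposition \ref{3} bounds only the difference $|1/\Lambda-1/N|$ and never the individual terms.) As normalized, your family is therefore not locally bounded and the normality claim fails. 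The repair is cheap and does not affect derivatives: replace $\hat\zeta\la^u$ by $\hat\zeta\la^u-\hat\zeta\la^u(0)$ (or translate by an integer); the new maps are univalent, commute with $T_1$ and fix $0$, hence fix every integer, and such a family is compact---this is the paper's own unnumbered compactness lemma for $\varphi\la=\psi\la^{-1}$. Two smaller points: $Q_0=\overline{\{0\le\Re z\le1\}}\cap U$ is not compact (it reaches $\partial U$), so locally uniform convergence only gives bounds on compact subsets such as $[0,1]\subset\R$---which suffices, since the lemma is only used along $\hat R\la$ and $\hat R\la'$ in Proposition \ref{P1}; and constant limits are excluded at once by passing $g(z+1)=g(z)+1$ to the limit, with no need for the modulus argument you sketch.
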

\begin{proof}
For $h>0$, let $U_h =\{x+iy \,\,| \,\, |y|<h/2\}$, and consider the family of maps
$\Sigma(h) = \{\varphi : U_h \rightarrow \C \, | \,\,\,\, \varphi  \mbox{ is univalent, and } \varphi \circ T_1 = T_1 \circ \varphi \}$.
Then, by K\"obe Theorem, there exists $K=K(h)$ such that, for all $\varphi \in \Sigma$ and for all $z,\,w \in \R$,
$$ \frac{1}{K}<    \Big| \frac{\varphi'(z)}{\varphi'(w)}\Big| < K.$$
As $\varphi$ commutes with translation, we have
$$1= \int_{x-1/2}^{x+1/2} \varphi'(t) dt,$$
and as 
$$ \frac{\varphi'(x)}{K}<    | \varphi'(t)| < K \varphi'(x),$$
we obtain 
$$1\leq  \int_{x-1/2}^{x+1/2} \varphi'(x) K dt = K \varphi'(x),$$
so $\varphi'(x)\geq 1/K$. Similarly,
$$ \frac{\varphi'(x)}{K} =   \int_{x-1/2}^{x+1/2} \frac{\varphi'(x)}{K} dt \leq  \int_{x-1/2}^{x+1/2} \varphi'(t) dt =1 .$$

As the maps $\hat \zeta$ belong $\Sigma(\frac{\pi}{q\Log 2})$, because they are univalent, commuting with translation by one, and are defined on the bi-infinite strip of height $\frac{\pi}{q\Log 2}$ symmetric with respect to the real axis, the result follows.
\end{proof}
\begin{prop}\label{P1}
 There exist positive constants $C_1,C_2$ such that, for all $\lambda \in \Mpqr$,
$$C_1<\frac{\partial u\la}{\partial x}<C_2.$$
\end{prop}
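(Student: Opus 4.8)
The plan is to estimate $\partial u\la/\partial x$ by first reducing it to the derivatives of the two boundary restrictions of $\Phi\la$, and then identifying each such restriction, read in the canonical linearising coordinates $\hat\zeta_\iln^{l}$ and $\hat\zeta_\iln^{u}$, with the identity; the bound then becomes exactly what Lemma~\ref{cpthatzeta} provides. First I would observe that, from the interpolation formula defining $\Phi\la$,
$$
\frac{\partial u\la}{\partial x}(x+iy)=\Bigl(1-\tfrac{y}{h\la}\Bigr)\,\frac{d}{dx}\Re\,\Phi\la(x)+\tfrac{y}{h\la}\,\frac{d}{dx}\Re\,\Phi\la(x+ih\la),
$$
a convex combination since $0\le y\le h\la$. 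Hence it suffices to bound $\frac{d}{dx}\Re\,\Phi\la$ along the lower ray $\hat R\la\subset\R$ and along the upper ray $\hat R\la'=\R+ih\la$, from above and below by positive constants that do not depend on $\lambda\in\Mpqr$.

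The key claim is that $\Phi\la\circ\hat\zeta\la^u=\hat\zeta\nn^u$ and $\Phi\la\circ\hat\zeta\la^l=\hat\zeta\nn^l$ on $\R$; I will indicate the proof for the upper ray. Since $\Phi\la$ restricted to the analytic arc $\hat R\la'$ is a real-analytic diffeomorphism onto the analytic arc $\hat R\nn'$, it continues holomorphically to a neighbourhood of $\hat R\la'$, so $G:=(\hat\zeta\nn^u)^{-1}\circ\Phi\la\circ\hat\zeta\la^u$ is holomorphic on a neighbourhood of $\R$ in $U$. The identities built into the definitions — $\eta\la\circ\hat\zeta\la^u=B\la^{-1}\circ p\la^u$, $\eta\nn\circ\hat\zeta\nn^u=B\nn^{-1}\circ p\nn^u$, and $\eta\nn\circ\Phi\la=B\nn^{-1}\circ R_{\widehat\theta}\circ B\la\circ\eta\la$ on (a neighbourhood of) $\hat R\la'$ — combine to give $p\nn^u\circ G=R_{\widehat\theta}\circ p\la^u$ near $\R$, i.e. $G=(p\nn^u)^{-1}\circ R_{\widehat\theta}\circ p\la^u$. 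Since $R\la'$ and $R\nn'$ are, respectively, $f\la$-fixed and $g\nn$-fixed external rays, their B\"ottcher arguments $\theta_1',\theta_2'$ satisfy $(2^q-1)\theta_i'\in\Z$; as $R_{\widehat\theta}$ carries the $\theta_1'$-ray to the $\theta_2'$-ray, $\widehat\theta/2\pi\equiv\theta_2'-\theta_1'\pmod 1$, hence $(2^q-1)\widehat\theta\in2\pi\Z$ and $R_{\widehat\theta}$ commutes with $z\mapsto z^{2^q}$ on $\C\setminus\overline\D$. Consequently $(p\nn^u)^{-1}\circ R_{\widehat\theta}\circ p\la^u$ is a conformal automorphism of the whole strip $U$ commuting with $T_1$ and carrying $\R$ onto $\R$ — hence a real translation — and it fixes $0$, because $\hat\zeta\la^u(0)=\hat z\la'$, $\hat\zeta\nn^u(0)=\hat z\nn'$ and $\Phi\la(\hat z\la')=\hat z\nn'$. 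Therefore $G=\mathrm{id}$, proving the claim; the lower ray is identical, with $p_\iln^l$, $\hat\zeta_\iln^l$, the rays $R\la,R\nn$ and the normalisations $\hat\zeta\la^l(0)=\hat\zeta\nn^l(0)=0=\Phi\la(0)$.

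To conclude, write $\hat\zeta\la^u(t)=g\la(t)+ih\la$ with $g\la\colon\R\to\R$ an increasing real-analytic diffeomorphism commuting with $t\mapsto t+1$, and likewise $\hat\zeta\nn^u(t)=g\nn(t)+ih\nn$. By the claim, $\Re\,\Phi\la(x+ih\la)=g\nn\bigl(g\la^{-1}(x)\bigr)$, so with $t=g\la^{-1}(x)$,
$$
\frac{d}{dx}\Re\,\Phi\la(x+ih\la)=\frac{(\hat\zeta\nn^u)'(t)}{(\hat\zeta\la^u)'(t)}\in\Bigl(\tfrac1{K^2},\,K^2\Bigr),
$$
$K=K(\pi/(q\Log 2))$ being the constant of Lemma~\ref{cpthatzeta}; the same bound holds along $\hat R\la$ via $\hat\zeta_\iln^l$. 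By the convexity observation, $1/K^2<\partial u\la/\partial x<K^2$ on $\hat W\la$ for every $\lambda\in\Mpqr$, so the Proposition holds with $C_1=1/K^2$ and $C_2=K^2$.

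The step I expect to be the main obstacle is the rigidity above: showing that $\Phi\la$, conjugated by the canonical coordinates along each separating ray, extends to a \emph{global} automorphism of the model strip $U$ (rather than of a sub-strip), which is then pinned to the identity by its $T_1$-equivariance and fixed point. Making this airtight requires tracking several conjugacy relations among $U$, the cylinders $\hat W_\iln$ and $\C\setminus\overline\D$ without confusing the B\"ottcher coordinates of $P_\iln$ with those of its quadratic-like restriction, keeping in mind that $\Phi\la$ itself is only quasiconformal (so one must work with its real-analytic boundary restrictions and their holomorphic continuations), and checking that $R_{\widehat\theta}$ commutes with $z\mapsto z^{2^q}$ — which is where the choice of separating rays and the common denominator $q$ enter. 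The remaining estimates are elementary and the uniformity is inherited verbatim from Lemma~\ref{cpthatzeta}.
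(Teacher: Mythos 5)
Your proof is correct and follows essentially the same route as the paper: identify the two boundary restrictions of $\Phi\la$ with $\hat\zeta\nn^{u}\circ(\hat\zeta\la^{u})^{-1}$ and $\hat\zeta\nn^{l}\circ(\hat\zeta\la^{l})^{-1}$ and invoke Lemma~\ref{cpthatzeta}; the paper simply states this identification with a ``Note that'' (and with the order of composition and the $u/l$ labels apparently transposed, which your version corrects), whereas you supply the rigidity argument justifying it. The convex-combination reduction and the resulting constants $C_1=1/K^2$, $C_2=K^2$ are exactly what the paper's argument yields.
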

\begin{proof}
Note that (see Figure \ref{fig:decomposition}) $$\Phi_{\lambda| \hat R\la}= \hat \zeta\la^u\circ (\hat \zeta\nn^u)^{-1}, \mbox{ and } \Phi_{\lambda|\hat R\la'}= \hat \zeta\la^l\circ (\hat \zeta\nn^l)^{-1}.$$
Hence the result follows from the previous Lemma \ref{cpthatzeta}.
\end{proof}
\subsubsection{Bounding $\frac{\partial u\la}{\partial y}$}
Recall that, at the beginning of this Section, for every $\lambda \in \Mpqr$ we chose an equipotential $E=E\la$ such that, defining $z_\iln:=R_\iln \cap E$, for $\iln ={\lambda,\nu}$, $x\la=f\la(z\la)\in R\la$, $x\nn=g\nn(z\nn)\in R\nn$, $z'_\iln:=R'_\iln \cap E$, $x'\la=f\la(z'\la)\in R'\la$, and $x'\nn=g\nn(z'\nn)\in R'\nn$,
the connected component of $R_\iln$ bounded by $\alpha_\iln$ and $x_\iln$, and the connected component of $R'_\iln$ bounded by $\alpha_\iln$ and $x'_\iln$, both belong to the maximal domain $D_\iln$ for the linearizer $\phi_\iln$ for $\alpha_\iln$.

Let $j \in (1, q-1)$ be given by $P^{j}\la(R\la)=R\la'$, and define $y\la= P\la^{j}(z\la) \in R\la'$. Similarly, let  $k \in (1,q-1)$ be given by $P\nn^{k}(R\nn)=R\nn'$, and define $y\nn= P\nn^{k}(z\nn) \in R\nn'$. Hence,
by construction we have (see Figure \ref{fig:Yp}):
$$z\la'< y\la < x\la' \mbox{ on } R\la',\,\ \mbox{ and }z\nn'< y\nn < x\nn' \mbox{ on } R\nn'.$$
\begin{figure}[tbh]
    \centering
    \includegraphics[scale = 0.5]{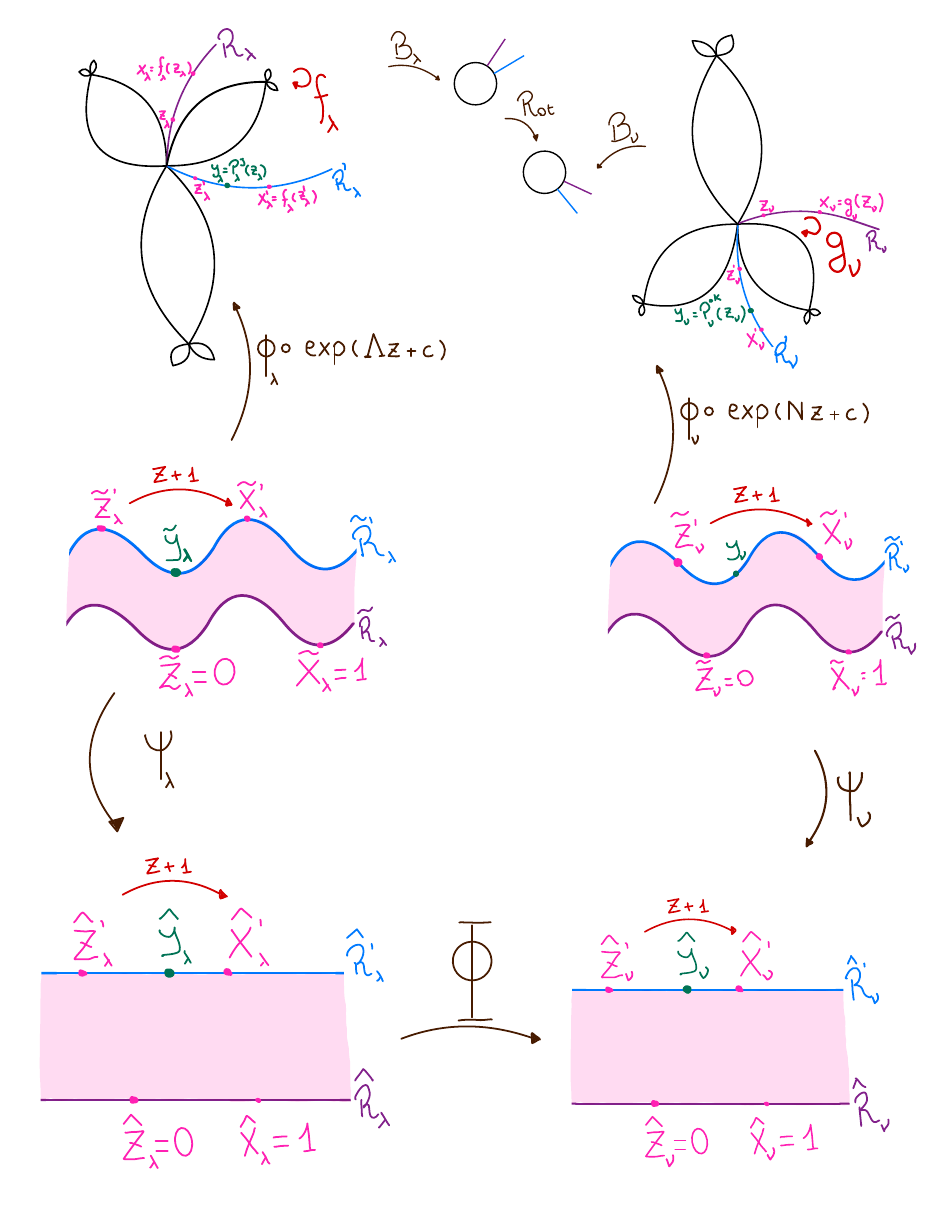}
    \caption{The points $y\la$ and $y\nn$. (All the points $z'_i,\,x'_i,\,z_i,\,x_i, i={\lambda,\nu},$ should be much closed to the $\alpha$-fixed point, to be contained in a domain oof univalency for the linearizer. We draw them so far from where they should be in order to make them visible).}
    \label{fig:Yp}
\end{figure}
Recall that the map $\phi\la\circ \exp(\Lambda z+c)$ maps $0 \in \tilde R\la$ to $z\la \in R\la$, and $\tilde z\la' \in \tilde R\la'$ to $z\la' \in R\la'$.
In this coordinates, the preimage $\tilde y\la$ of $y\la$ on $\tilde R'\la$ is 
$$\tilde y\la=\frac{j}{q}+\frac{q-1}{q \Lambda} 2\pi i,$$ as $R'\la$ and $R\la$ are consecutive rays in anticlockwise order.
Similarly, the preimage $\tilde y\nn$ of $y\nn$ under  $\phi\nn\circ \exp(N z+c')$, with $N= \Log \nu^q=q \Log \nu -2\pi i p'$, is
$$\tilde y\nn=\frac{k}{q}+\frac{q-1}{q N} 2\pi i.$$
We define $\hat y_\iln=\psi_\iln(\tilde y_\iln)$. 
\begin{prop}\label{3}
 There exists a constant $M>0$ such that, for all $\lambda \in \Mpqr$,
$$\Big|\frac{\partial u\la}{\partial y}\Big|<M.$$
\end{prop}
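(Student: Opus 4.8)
The plan is to write $\partial u\la/\partial y$ explicitly from the interpolation formula and bound its two pieces uniformly. Recall
$$u\la(x+iy) = \Re(\Phi\la(x)) + \frac{y}{h\la}\bigl(\Re(\Phi\la(x+ih\la)) - \Re(\Phi\la(x))\bigr),$$
so that
$$\frac{\partial u\la}{\partial y} = \frac{1}{h\la}\bigl(\Re(\Phi\la(x+ih\la)) - \Re(\Phi\la(x))\bigr) = \frac{1}{h\la}\bigl(\Re(\Phi\la(\hat x\nn' \text{-side})) - \Re(\Phi\la(\hat x\la\text{-side}))\bigr).$$
By Proposition \ref{2} we already have $h\la \geq (q-1)\pi/(q^2\Log 2)$, a positive lower bound independent of $\lambda$; hence it suffices to bound $|\Re\Phi\la(x+ih\la) - \Re\Phi\la(x)|$ uniformly in $\lambda$ and in $x\in\R$. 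Here $\Phi\la|_{\hat R\la} = \hat\zeta\la^u\circ(\hat\zeta\nn^u)^{-1}$ and $\Phi\la|_{\hat R\la'} = \hat\zeta\la^l\circ(\hat\zeta\nn^l)^{-1}$, both of which are, by Lemma \ref{cpthatzeta}, maps with derivative pinched between $1/K$ and $K$; combined with the compactness of the families $\varphi_\iln = \psi_\iln^{-1}$ (and Lemma \ref{b}, $|\varphi_\iln(w)-w|<K_q$), the two boundary maps $\Phi\la|_{\hat R\la}$ and $\Phi\la|_{\hat R\la'}$ move points by a uniformly bounded amount; that is, $|\Phi\la(x) - x| < K'$ and $|\Phi\la(x+ih\la) - (x+ih\nn)| < K'$ for a constant $K'$ independent of $\lambda$.

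From these two displacement bounds one gets directly
$$\bigl|\Re\Phi\la(x+ih\la) - \Re\Phi\la(x)\bigr| \leq \bigl|\Re\Phi\la(x+ih\la) - x\bigr| + \bigl|x - \Re\Phi\la(x)\bigr| \leq 2K',$$
since $\Re(x+ih\nn) = x$. Therefore
$$\Bigl|\frac{\partial u\la}{\partial y}\Bigr| \leq \frac{2K'}{h\la} \leq \frac{2K' q^2 \Log 2}{(q-1)\pi} =: M,$$
which is the desired uniform bound. So the argument is: (i) differentiate the interpolation formula in $y$; (ii) invoke Proposition \ref{2} for the uniform lower bound on $h\la$; (iii) invoke Lemma \ref{cpthatzeta} together with the compactness Lemma and Lemma \ref{b} to see that the boundary maps $\hat\zeta_\iln^{u,l}$, hence $\Phi\la$ restricted to $\hat R\la$ and $\hat R\la'$, displace points by a bounded amount; (iv) combine.

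The one point that needs care — and which I expect to be the main technical obstacle — is step (iii): establishing that the boundary maps $\Phi\la|_{\hat R\la}$ and $\Phi\la|_{\hat R\la'}$ displace real points by a \emph{uniformly} bounded amount, as opposed to merely having bounded \emph{derivative}. A map with derivative in $[1/K,K]$ that commutes with $z\mapsto z+1$ automatically has bounded displacement modulo integers, but one must check that the normalizations (each $\hat\zeta_\iln^u$ fixes $\hat z_\iln'$, each $\hat\zeta_\iln^l$ fixes $0$, and the lifts defining $\Phi\la$ are chosen to fix the real axis and to send $\hat z\la'$ to $\hat z\nn'$) pin down the integer ambiguity so that the displacement itself, not just its fractional part, stays bounded; this is where the precise normalization choices made in Section \ref{between rays} and the compactness of $\{\varphi\la\}$, $\{\varphi\nn\}$ are used. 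Once the displacement bound is in hand, the rest is the elementary estimate above, and together with Propositions \ref{2} and \ref{P1} this completes the verification of the three inequalities $C_1<\partial u\la/\partial x<C_2$, $K_1<\partial v\la/\partial y<K_2$, $|\partial u\la/\partial y|<M$ needed for Theorem \ref{intrays}.
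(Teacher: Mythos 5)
Your reduction is the same as the paper's: differentiate the interpolation in $y$, use Proposition \ref{2} to bound $h\la$ from below, and reduce to a uniform bound on $|\Re(\Phi\la(x+ih\la))-\Re(\Phi\la(x))|$. The lower-boundary displacement bound is also handled as in the paper (there it follows immediately from $\Phi\la(0)=0$ and $\Phi\la\circ T_1=T_1\circ\Phi\la$, giving $|\Re(\Phi\la(x))-\Re(x)|\leq 1$). But the upper-boundary displacement bound --- your claim $|\Phi\la(x+ih\la)-(x+ih\nn)|<K'$ --- is exactly where your argument has a genuine gap, and you flag the right spot but misdiagnose what fills it. It is not a matter of the normalizations ``pinning down an integer ambiguity,'' nor does it follow from Lemma \ref{cpthatzeta} plus the compactness of the families $\varphi\la$ and $\varphi\nn$: those facts are internal to each family separately, whereas the quantity to be bounded is $|\Re(\hat z\nn')-\Re(\hat z\la')|$, a comparison \emph{across} the two families linked by the straightening map $\nu=\xi(\lambda)$. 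Concretely, $\tilde y\la=\frac{j}{q}+\frac{2\pi i(q-1)}{q\Lambda}$ and $\tilde y\nn=\frac{k}{q}+\frac{2\pi i(q-1)}{qN}$, and $\Re\bigl(2\pi i/\Lambda\bigr)$ is \emph{unbounded} as $\lambda\to\lambda_0$ along $\Mpqr$; so each displacement is individually unbounded, and only the difference is controlled.

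The paper closes this gap by reducing (via $|\hat z_\iln'-\hat y_\iln|<1$ and Lemma \ref{b}) to bounding $|\tilde y\la-\tilde y\nn|$, and then invoking the estimate
$$\Bigl|\frac{1}{\Lambda}-\frac{1}{N}\Bigr|<D \qquad\text{for all }\lambda\in\Mpqr,\ \nu=\xi(\lambda),$$
which comes from Proposition 5 in \cite{LP} together with Corollary 5.4 in \cite{K1}. This is the essential external input --- it is the quantitative expression of the fact that the Douady--Hubbard map between satellite copies of the \emph{same} denominator $q$ nearly preserves the parabolic coordinate $1/\Lambda$ --- and no amount of normalization bookkeeping or per-family compactness substitutes for it. To repair your proof, insert this comparison at step (iii) for the upper boundary; the rest of your argument then goes through as written.
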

\begin{proof}
A computation shows that $$|\frac{\partial u\la}{\partial y}|= |\frac{\Re(\Phi\la(x+ih\la))-\Re(\Phi\la(x))}{h\la}|,$$ and as the denominator is uniformly bounded from below by Proposition \ref{2},
it is enough to show that there exists a $\hat M>0$ such that, for all $\lambda \in \Mpqr$ and for all $x \in \hat R$, we have 
$$|\Re(\Phi\la(x+ih\la))-\Re(\Phi\la(x))|<  \hat M.$$
As $\Re(x+ih\la)=\Re(x)$, we have $$|\Re(\Phi\la(x+ih\la))-\Re(\Phi\la(x))|\leq |\Re(\Phi\la(x+ih\la))- \Re(x+ih\la)|+| \Re(\Phi\la(x))-\Re(x)|,$$ and since $\Phi\la(0)=0$ and $\Phi\la\circ T_1=T_1\circ \Phi\la$, we obtain that  $$| \Re(\Phi\la(x))-\Re(x)| \leq 1,$$ so it is enough to bound $$|\Re(\Phi\la(x+ih\la))- \Re(x+ih\la)|.$$
As $\Phi\la\circ T_1=T_1\circ \Phi\la$, it is enough to bound the previous expression for a chosen $w\la=x+ih\la \in R\la'$. We choose to bound $|\Re(\Phi\la(\hat z\la'))- \Re(\hat z\la')|=|\Re(\hat z\nn')- \Re(\hat z\la')|$. 

As both 
$|\hat z\la' -\hat y\la|<1$ and $|z\nn' -\hat y\nn|<1$, this reduces to bound $|\hat y\la -\hat y\nn|$, and as $|\psi\la(x)-x|$ is uniformly bounded by Lemma \ref{b}, it is enough to bound 
$$|\tilde y\la -\tilde y\nn|= \Big|\frac{j}{q}+ \frac{2\pi i (q-1)}{q\Lambda}-\frac{k}{q}- \frac{2\pi i (q-1)}{qN}\Big|.$$
This follows from the fact that both
 $j/q$ and $k/q$ are less that $1$, and that, by Theorem 1.4 in \cite{K1} (see also Theorem 5.7 in \cite{K}) and the Proposition 5 in \cite{LP}, there exists a  $D>0$ such that, for all $\lambda \in \Mpqr$ and $\nu=\xi(\lambda)$,
 $$\Big|\frac{1}{\Lambda}-\frac{1}{N}\Big|<D.$$
\end{proof}

\subsection{Interpolation between neighbourhoods of the points of the $q$-cycle}\label{cycle}
Recall that $p/q$ and $p'\!/q$ are rational numbers with the same denominator $q\geq 3$ 
and that $\Mpq$ and $\Mppq$ are the satellite copies of the Mandelbrot set 
with root points $\lambda_0= e^{2\pi i p/q}$ and $\nu_0=e^{2\pi i p'\!/q}$ respectively. 
When $\lambda \in \Mpq$ and $\nu \in \Mppq$, the $q$-th iterates $f\la$ of $P\la(z)=\lambda z + z^2$ and 
$g\nn$ of $P\nn(z)=\nu z + z^ 2$ have quadratic like restrictions with $\beta$-fixed point 
the $\alpha$-fixed point $0$ of the respective quadratic polynomials.
Denote by $B\l0^j$ the components of the immediate parabolic basin 
for the parabolic fixed point $\alpha\l0 = 0$ and $q^j = q\la^j,\,j\in [0,q-1]$ 
the points of the $q$-cycle bifurcating from $\alpha\l0$ 
at the root of the satellite copy numbered in the anti-clockwise order, with $q^0$ the $\alpha$-fixed point of the quadratic-like restriction $f\la$ of $P\la$ and 
$q^{j+p} = P\la(q^j)$.
Recall also that $\rho\la:=(P\la^q)'(q^0)$ denotes the multiplier of the $q$-cycle. 
Define $B\n0$, $q^j\nn$ and $\rho\nn$ similarly.

Enumerate the rays $R_\iln^j$, where $\iln={\lambda,\nu}$ in the counter clockwise order with 
$R_\iln^0 = R_\iln'$ and thus $R_\iln = R_\iln^1$. 
Recall also that  $\Lambda = Log \lambda^q = q log \lambda -2\pi i p$ is the principal logarithm of $f'\la(\alpha\la)=\lambda^q$, and similarly $N$ is the principal logarithm of $g\nn'(\alpha\nn) = \nu^q$. 
Note that, if $\Im(\Lambda), \Im(N) <0$, the dynamics on the rays spiral out clockwise, 
hence the ray  $R_\iln^1$ is outer most relative to $R_\iln^0$ in the double spiral of the rays and 
so locally lies between $q^j_\iln$ and $R_\iln^0$ for $0<j<q$.
On the other hand, if $\Im(\Lambda), \Im(N)>0$, the ray $R_\iln^0$ is outer most in the double spiral of the rays and 
so locally lies between $q_\iln^j$ and $R_\iln^1$. 
To fix notation, we assume from now on that $\Im(\Lambda), \Im(N)<0$, the other case being similar. Note that by construction $|\lambda|, |\rho\la|, |\nu|, | \rho_\nu|\geq 1$ for $\lambda\in\Mpqr$ 
and $\nu\in\Mppqr$. Hence the logarithms $\Lambda, N, \Log(\rho\la), \Log(\rho\nn)$ 
all have non negative real parts on $\Mpqr$ and $\Mppqr$ respectively .

For $\whR>0$ let 
$$
\Delta(\whR) := \{\lambda \in  \D(\lambda_0,\whR) : |\Arg(-i\Lambda)| < \pi/6\}, 
\quad \Mpqr(\whR) := \Mpqr\cap\Delta(\whR)
$$
and 
$$
\Xi(\whR) := \{\nu \in \D(\nu_0,\widehat R): |\Arg(-iN)| < \pi/6\}, 
\quad
\Mppqr(\whR) := \Mppqr\cap\Xi(\whR).
$$

Write $f\l0(z) = z + a_1z^{q+1}+\OO(z^{q+2})$ and $g\n0(z) = z + b_1z^{q+1}+\OO(z^{q+2})$.  
In order to alleviate the following discussion we shall conjugate all maps $f\la$, $\lambda\in\Mpq$ 
by a $q$-th root $a'$ of $a_1$, and all maps $g\nn$, $\nu\in\Mppq$ by a $q$-th root $b'$ of $b_1$, 
and obtain new normalized families $\nf\la(z) = a' f\la(z/a')$ and $\ng\nn(z) = b' g\nn(z/b')$ for which 
$\nf\l0(z) = z + z^{q+1}+\OO(z^{q+2})$ and $\ng\n0(z) = z + z^{q+1}+\OO(z^{q+2})$ 
and the rays $a'R\l0^1$ and $b'R\n0^1$ converge to $0$ along the positive real axis. 
Possibly reducing $\widehat R>0$, there exists $r_0>0$ such that, 
for all $\lambda \in \D(\lambda_0,\widehat R)$ and $\nu \in \D(\nu_0,\widehat R)$, 
the disc $\D(0,r_0)$ contains $\alpha\la=0$, 
the $\nf\la$-$q$-cycle $q^j\la$ and no other fixed point of $\nf\la$ and furthermore 
$\Re(\nf'\la)>0$ on $\overline{\D}(0,r_0)$;  
and similarly the disc $\D(0,r_0)$ contains $\alpha\nn=0$, 
the $\ng\nn$-$q$-cycle $q^j\nn$ and no other fixed point of $\ng\nn$ 
and furthermor $\Re(\ng'\nn)>0$ on $\overline{\D}(0,r_0)$. 

\begin{remark}
To avoid heavy notation, we will set $f\la=\nf\la$,\,$g\nn=\ng\nn$ and so consider the families $f\la$ and $g\nn$ normalized as above, this is with $f\l0(z) = z + z^{q+1}+\OO(z^{q+2})$ and $g\n0(z) = z + z^{q+1}+\OO(z^{q+2})$. 
We shall not invent new names for entities such as rays, $q$-orbits and basins 
for the rescaled maps and shall omit the factors $a'$ and $b'$. \\
We will  furthermore use $w,\,x,\,y,\,z$ to denote points in a neighbourhood of the $q$ cycle, 
$E$ for denoting equipotentials, and $r$ and $R$ to denote radii, unrelated to points, 
equipotentials and radii having same notation as in the previous section. 
\end{remark}

In the following we shall make a parallel discussion for 
the families $f\la$ and $g\nn$. When only $f\la$ and quantities and maps related to $f\la$ 
are mentioned there are parallel quantities and maps related to $g\nn$. 

We shall make use of the Buff translation structure. To this end define a vector field $v\la(z) d/dz$ (the Buff-field) and the associated $1$-form $\omega\la(z) dz$ on $\overline\D(0,r_0)$, where
$$
v\la(z) = \frac{(f\la(z)-z)Log f'\la(z)}{f'\la(z)-1}, \qquad
\omega\la = \frac{1}{v\la}
$$
and define a holomorphic function $H$ on $\D(\lambda_0, \widehat R)$ by 
$$H(\lambda):=\frac{1}{2\pi i}\oint_{\S_{r_0}}{\omega\la(\zeta) d\zeta}
$$
Then we have (see \cite{LP}, Section 5.3)
$$
H(\lambda)= 
\left\{ 
  \begin{array}{ c l }
    \frac{1}{\Lambda\la}+ \frac{q}{Log(\rho\la)},
 & \mbox{ if  }\,\,\,\, \lambda \neq \lambda_0 \\
    Resit(f_{\lambda_0},0)  & \mbox{ if }  \,\,\,\, \lambda=\lambda_0.
  \end{array}
\right.
$$
In particular
\begin{equation}\label{H_assymptotics}
\frac{1}{\Lambda\la}+ \frac{q}{Log(\rho\la)} \underset{\lambda\to\lambda_0}\longrightarrow 
Resit(f_{\lambda_0},0)
\end{equation}
and we may also use the multiplier $\rho$ of the $q$-cycle as parameter provided 
$\whR>0$ is small enough, i.e.~we can invert and find $\lambda$ as a function of $\rho$.

Define similarly $v\nn$, $w\nn$ and $H(\nu)$. 
We obtain the following Lemma 
as an immediate consequence of equation~(\ref{H_assymptotics}).
\begin{lemma}\label{difmult}
There exist a constant $C_1>0$ such that, for every
 $\lambda \in \D^*(\lambda_0, \widehat R)$,  
 and $\nu \in \D^*(\nu_0, \widehat R)$, 
 we have 
 $$
 \Big| \frac{2\pi i }{q\Lambda} +\frac{2\pi i }{Log \rho\la}\Big|<C_1,\qquad\Big|\frac{2\pi i }{qN} + \frac{2\pi i }{Log \rho\nn}\Big|<C_1.
 $$
Moreover for $\lambda\in\Mpqr(\whR)$ and $\nu\in\Mppqr(\whR)$ 
$$
0\leq \Im\left(\frac{2\pi i }{q\Lambda}\right), \Im\left(\frac{2\pi i }{Log \rho\la}\right)< C_1,\qquad 
0\leq \Im\left(\frac{2\pi i }{qN}\right), \Im\left(\frac{2\pi i }{Log \rho\nn}\right)< C_1. 
 $$
\end{lemma}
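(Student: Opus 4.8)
The plan is to read off both claims directly from the formula for $H(\lambda)$ together with the asymptotics \eqref{H_assymptotics}, being careful about the real-part sign conventions recorded just above. First I would recall that $H$ is holomorphic on the disc $\D(\lambda_0,\whR)$ (and similarly for $\nu$), so in particular $H$ is continuous, hence bounded on a slightly smaller closed disc; possibly shrinking $\whR>0$ once and for all, there is a constant $c>0$ with $|H(\lambda)|\le c$ for all $\lambda\in\D(\lambda_0,\whR)$ and $|H(\nu)|\le c$ for all $\nu\in\D(\nu_0,\whR)$. Since for $\lambda\ne\lambda_0$ we have $H(\lambda)=\frac1{\Lambda\la}+\frac q{\Log(\rho\la)}$, multiplying by $2\pi i$ gives
$$
\Big|\frac{2\pi i}{\Lambda\la}+\frac{2\pi i\, q}{\Log(\rho\la)}\Big|\le 2\pi c,
$$
and dividing through by $q$ (recall $q\ge 3$) yields the first inequality with $C_1:=2\pi c$; the estimate for $\nu$ is identical. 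This handles the first pair of bounds.

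For the second pair, the point is that on the restricted sets $\Mpqr(\whR)$ and $\Mppqr(\whR)$ we have control on the arguments of $\Lambda$ and of $\Log\rho$, and hence on the signs of the imaginary parts of their reciprocals. Concretely, on $\Mpqr(\whR)$ we have $|\Arg(-i\Lambda)|<\pi/6$ by definition of $\Delta(\whR)$, so $\Lambda$ lies in a narrow sector about the negative imaginary axis; consequently $1/\Lambda$ lies in a narrow sector about the positive imaginary axis, so $\Im(1/\Lambda)>0$, i.e.\ $\Im\!\big(\tfrac{2\pi i}{q\Lambda}\big)\ge 0$. The same argument applied to the multiplier, using that (for $\whR$ small) $\rho$ can be used as a parameter and the corresponding sector condition on $\Log\rho$ holds on $\Mpqr(\whR)$ — equivalently, that $\Re(\Log\rho\la)\ge 0$ and, near $\lambda_0$, $\Log\rho\la$ is forced into the matching sector by the relation $H(\lambda)\to\mathrm{Resit}(f_{\lambda_0},0)$ with $1/\Lambda\la\to\infty$ — gives $\Im\!\big(\tfrac{2\pi i}{\Log\rho\la}\big)\ge 0$. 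Finally, each of these two imaginary parts is bounded above by $C_1$: their sum $\Im\!\big(\tfrac{2\pi i}{q\Lambda}+\tfrac{2\pi i}{\Log\rho\la}\big)$ has absolute value at most $C_1$ by the first part (after the division by $q$, absorbing the factor into $C_1$), and since both summands are nonnegative, each is $\le C_1$. The $\nu$-statement follows verbatim.

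I expect the only genuine subtlety to be the bookkeeping in the second part: pinning down that the sector hypothesis $|\Arg(-i\Lambda)|<\pi/6$ really does force $\Im(1/\Lambda)\ge 0$ with the sign conventions fixed earlier ($\Im(\Lambda),\Im(N)<0$, nonnegative real parts of all four logarithms on the restricted sets), and that the analogous statement for $\Log\rho$ is available — this is where one leans on the cited results from \cite{LP}, \cite{K1}, \cite{K} and on the asymptotic \eqref{H_assymptotics}, rather than on a fresh computation. Once the signs are in place, extracting the individual upper bounds from the bound on the sum is immediate, so no lengthy calculation is needed. One should also make sure to choose a single $\whR>0$ small enough that all of: boundedness of $H$, invertibility of $\rho\mapsto\lambda$, and the sector conditions defining $\Mpqr(\whR)$, $\Mppqr(\whR)$ hold simultaneously.
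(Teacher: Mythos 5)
Your first paragraph is correct and is exactly the paper's argument: the quantity to be bounded is $\tfrac{2\pi i}{q}H(\lambda)$, and $H$ is holomorphic (hence, after shrinking $\whR$, bounded) on the full disk $\D(\lambda_0,\whR)$, so the first pair of estimates follows; likewise for $\nu$.

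The second part contains a genuine error in the key sign computation. Since multiplication by $i$ rotates by $\pi/2$, we have
$$
\Im\Big(\frac{2\pi i}{q\Lambda}\Big)=\frac{2\pi}{q}\,\Re\Big(\frac{1}{\Lambda}\Big)=\frac{2\pi}{q}\,\frac{\Re(\Lambda)}{|\Lambda|^2},
$$
so the relevant quantity is $\Re(1/\Lambda)$, not $\Im(1/\Lambda)$; your step ``$\Im(1/\Lambda)>0$, i.e.\ $\Im(2\pi i/(q\Lambda))\ge 0$'' is a non sequitur. Moreover, the sector condition $|\Arg(-i\Lambda)|<\pi/6$ that you lean on cannot supply the needed sign: it confines $\Lambda$ to a cone of half-angle $\pi/6$ about the imaginary axis, inside which $\Re(\Lambda)$ --- and hence $\Re(1/\Lambda)$ --- takes both signs. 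The correct input, which is what the paper's proof uses and which you mention only in passing in your closing paragraph, is that $\lambda\in\Mpqr$ forces $\lambda\notin\D\cup H_{p/q}$, so $|\lambda|\ge 1$ and $|\rho_\lambda|\ge 1$, whence $\Re(\Lambda)\ge 0$ and $\Re(\Log\rho_\lambda)\ge 0$; by the identity displayed above this gives $\Im(2\pi i/(q\Lambda))\ge 0$ and $\Im(2\pi i/\Log\rho_\lambda)\ge 0$ directly, with no appeal to the sector condition or to the asymptotics of $H$. Once those signs are in place, your extraction of the individual upper bounds from the bound on the imaginary part of the sum (both summands nonnegative, sum less than $C_1$) is correct and coincides with the paper's conclusion.
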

\begin{proof}
On $\D(\lambda_0,\widehat R)$ the function $H(\lambda)$ is bounded and equals 
$\frac{2\pi i }{q\Lambda} +\frac{2\pi i }{Log \rho\la}$ on $\D^*(\lambda_0, \widehat R)$. 
And similarly for $\nu, N$.
By taking the larger of the bounds we can assume the constants are the same. 

Moreover when also $\lambda\in\Mpqr$ and $\nu\in\Mppqr$ 
then in particular $\lambda\notin\D\cup H_{p/q}$
and $\nu\notin\D\cup H_{p'/q}$ so that 
$$
0\leq\Re(\Lambda), \Re(N), \Re(\Log\rho\la), \Re(\Log\rho\nn)
$$
from which the second line of inequalities follows.
\end{proof}

\begin{lemma}\label{rhos}
  There exists $\hat C>0$ such that, 
  $\forall \lambda \in \Mpqr(\whR)$ and $ \nu=\xi(\lambda) $ we have
  $$
  \left|\frac{2\pi i}{Log(\rho\la)}-\frac{2\pi i}{Log(\rho\nn)}\right|<\hat C.
  $$
\end{lemma}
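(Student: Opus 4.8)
The plan is to reduce the estimate to two facts already in hand: the boundedness of the holomorphic function $H$ on $\D(\lambda_0,\whR)$ (equivalently, the convergence in equation~(\ref{H_assymptotics})), and the bound $|1/\Lambda-1/N|<D$ on $\Mpqr$ obtained in the proof of Proposition~\ref{3} from Corollary 5.4 in \cite{K1} (see also Theorem 5.7 in \cite{K}) together with Proposition 5 in \cite{LP}. After possibly shrinking $\whR$ we may assume that $\xi$ maps $\Mpqr(\whR)$ into $\D^*(\nu_0,\whR)$, so that for $\lambda\in\Mpqr(\whR)$ and $\nu=\xi(\lambda)$ both $H(\lambda)$ and $H(\nu)$ are defined and bounded, say $|H(\lambda)|,|H(\nu)|\le C_H$.

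First I would rewrite the identity $H(\lambda)=\frac{1}{\Lambda}+\frac{q}{\Log(\rho\la)}$, valid on $\D^*(\lambda_0,\whR)$, and its analogue $H(\nu)=\frac{1}{N}+\frac{q}{\Log(\rho\nn)}$, in the form
$$\frac{2\pi i}{\Log(\rho\la)}=\frac{2\pi i}{q}\Big(H(\lambda)-\frac{1}{\Lambda}\Big),\qquad\frac{2\pi i}{\Log(\rho\nn)}=\frac{2\pi i}{q}\Big(H(\nu)-\frac{1}{N}\Big).$$
Subtracting these two equalities yields the decomposition
$$\frac{2\pi i}{\Log(\rho\la)}-\frac{2\pi i}{\Log(\rho\nn)}=\frac{2\pi i}{q}\big(H(\lambda)-H(\nu)\big)-\frac{2\pi i}{q}\Big(\frac{1}{\Lambda}-\frac{1}{N}\Big).$$
The first term on the right has modulus at most $\tfrac{4\pi}{q}C_H$ by the boundedness of $H$, and the second has modulus at most $\tfrac{2\pi}{q}D$ by the cited estimate; hence the statement holds with $\hat C:=\tfrac{4\pi}{q}C_H+\tfrac{2\pi}{q}D$.

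I do not expect a genuine obstacle here: once the decomposition above is in place the proof is a one-line triangle inequality. The only point worth stressing is that the summands $1/\Log(\rho\la)$ and $1/\Lambda$ individually blow up as $\lambda\to\lambda_0$ (and similarly for $\nu$), so the estimate really records a cancellation of singularities; isolating the regular part $H(\lambda)$ — which is exactly where the parabolic-residue asymptotics of Section~5 enter — is what makes this transparent, the remaining difference $1/\Lambda-1/N$ being controlled by the quasi-boundedness result imported from \cite{K1} and \cite{LP}.
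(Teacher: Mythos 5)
Your proof is correct and is essentially the paper's own argument: the paper simply quotes Lemma~\ref{difmult} (which is precisely the statement that the regular part $H$ is bounded, i.e.\ $\bigl|\tfrac{2\pi i}{q\Lambda}+\tfrac{2\pi i}{\Log\rho\la}\bigr|<C_1$ and likewise for $\nu$) and combines it with the bound $|1/\Lambda-1/N|<D$ from \cite{K1} and \cite{LP} via the same triangle-inequality decomposition you write out explicitly. No gap; you have merely unfolded the proof of Lemma~\ref{difmult} inline.
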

\begin{proof}
It follows from Proposition 5 in \cite{LP} and Theorem 1.4 in \cite{K1} (see also Theorem 5.7 in \cite{K}) that there exists a  $D>0$ such that, for all $\lambda \in \Mpqr$ and $\nu=\xi(\lambda)$,
 $$\Big|\frac{1}{\Lambda}-\frac{1}{N}\Big|<D.$$
Then the result follows from the previous Lemma \ref{difmult}.
 \end{proof}

The Buff-translation structures on 
$W_{\lambda, r_0}:=\Dbar(0, r_0)\setminus\{\textrm{$f\la$-fixed points}\}$ and $W_{\nu, r_0}:=\Dbar(0, r_0)\setminus\{\textrm{$g\nn$-fixed points}\}$ respectively are given by the integrating coordinates for the vector fields $v\la(z) d/dz$ and $v\nn(z) d/dz$
$$
\varphi\la(z) = \varphi_{\lambda,z_0}(z) = \int^z_{z_0} \omega\la(\zeta) d\zeta, \qquad
\varphi\nn(z) = \varphi_{\nu,z_0}(z) = \int_{z_0}^z \omega\nn(\zeta) d\zeta.$$
A priori these maps depend on a choice of base point $z_0$ for the integration 
and they are only defined on simply connected subsets 
of $W_{\lambda, r_0}$ and $W_{\nu, r_0}$ respectively. 
But they admit globally defined lifts to universal coverings of the respective domains and 
multivalued holomorphic extension by analytic continuation along curves with periods, which can be calculated from the residues of the respective $1$-forms and the homotopy class of closed curve connecting a point to itself within the domains. 
In particular the change of charts is a translation in the co-domain. Thus defining a translation structure, 
the Buff-translation structure on the respective domains $W_{\lambda, r_0}$ and $W_{\nu, r_0}$. 

An elementary computation shows that for a holomorphic map $f$ with $|f'(z) -1| < 1$ 
on an open subset $U\subset\C$
\begin{align*}
\omega_f(z) &= \frac{f'(z) -1}{(f(z) - z)\Log(1+(f'(z)-1))}\\ 
&= 
\frac{1 +\frac{1}{2}( f'(z) -1)+ \OO((f'(z) -1)^2)}{f(z) - z}\\
&=  \frac{1}{f(z) - z} + \frac{1}{2}\frac{f'(z) -1}{f(z) - z} + \OO\left(\frac{(f'(z) -1)^2}{f(z) - z}\right).
\end{align*}
From these fomulae it follows immediately that  $\omega_{\lambda_0}$ and $\omega_{\nu_0}$ 
can be expressed as Laurent series with dominant polar term $1/qz^{q+1}$. 
As a consequence the Buff forms 
$\omega_{\lambda_0}dz$ and $\omega_{\nu_0}dz$ have anti-derivatives: 
\begin{align*}
\varphi\l0(z) &= Resit(f_{\lambda_0},0)\log z - \frac{1}{qz^q} + \OO(\frac{1}{z^{q-1}}), 
\qquad 0<]z| \leq r_0\\
\varphi\n0(z) &= Resit(g_{\nu_0},0)\log z - \frac{1}{qz^q} + \OO(\frac{1}{z^{q-1}}),
\qquad 0<]z| \leq r_0
\end{align*}
Possibly reducing $r_0>0$ we can assume 
that 
$$
\max\{C_1, 10\} < \frac{1}{2}\frac{1}{qr_0^q},
$$
and for some constant $C_2>0$
$$
|qz^q\e^{iqt}\varphi_\ilnz(z\e^{it}) + 1|, 
\leq C_2|z|
$$
for all $z, |z|\leq r_0$ $t, |t|\leq\pi$ and $\ilnz = \lambda_0, \nu_0$.  
Thus possibly reducing $r_0$ we can suppose 
\begin{equation}\label{basic_coordinate_estimates}
|\varphi_\ilnz(z\e^{it}) + \frac{1}{qz^q\e^{iqt}}| \leq \frac{1}{20}\frac{1}{q|z|^q}
\end{equation}
for all $z, |z|\leq r_0$ and $t, |t|\leq\pi$ and $\ilnz = \lambda_0, \nu_0$.

Possibly reducing $r_0>0$ further we can assume
that for each $0<r\leq r_0$ the image curves $\varphi_\ilnz(r\e^{it})$ 
have negative signed curvature, and thus that the argument 
of the tangent vector is decreasing, and it decreases by $q2\pi$, 
when $r\e^{it}$ makes one round, i.e. when $t$ increases by $2\pi$. 
In particular, for each $\ilnz = \lambda_0, \nu_0$, 
there exist precisely $q$ points $y^j_\ilnz = y^j_\ilnz(r_0) = r_0\e^{it}$, $t = t_\ilnz^j$, 
of norm $r_0$ where the argument function of $d/dt(\varphi_\ilnz(r_0\e^{it})) =  ir_0\e^{it}/v_\ilnz(r_0\e^{it})$ 
is zero modulo $2\pi$. 
This means that there are precisely $q$ points 
$y^j_\ilnz$ of norm $r_0$, where the tangent vector
$iy^j_\ilnz/v_\ilnz(y^j_\ilnz)$ at $\wty^j_\ilnz = \varphi_\ilnz(y^j_\ilnz)$ is real and positive. 
The points $y^j_\ilnz$ are interlaced with
$q$ points $w^j_\ilnz = w^j_\ilnz(r_0)$ of norm $r_0$, 
where the tangent vector $iw^j_\ilnz/v_\ilnz(w^j_\ilnz)$ at $\wtw^j_\ilnz = \varphi_\ilnz(w^j_\ilnz)$ 
is real and negative, 
$q$ points $x^{j+}_\ilnz = x^{j+}_\ilnz(r_0)$ of norm $r_0$, where the tangent vector 
$ix^{j+}_\ilnz/v_\ilnz(x^{j+}_\ilnz)$ at 
$\wtx^{j+}_\ilnz = \varphi_\ilnz(x^{j+}_\ilnz)$
is purely imaginary and points upwards, 
and $q$ points $x^{j-}_\ilnz = x^{j-}_\ilnz(r_0)$ of norm $r_0$, 
where the tangent vector 
$ix^{j-}_\ilnz/v_\ilnz(x^{j-}_\ilnz)$ at 
$\wtx_\ilnz^{j-} = \varphi_\ilnz(x^{j-}_\ilnz)$is purely imaginary and points downwards (see Figure \ref{fig:BF}).
\begin{figure}[tbh]
    \centering
    \includegraphics[scale = 0.5]{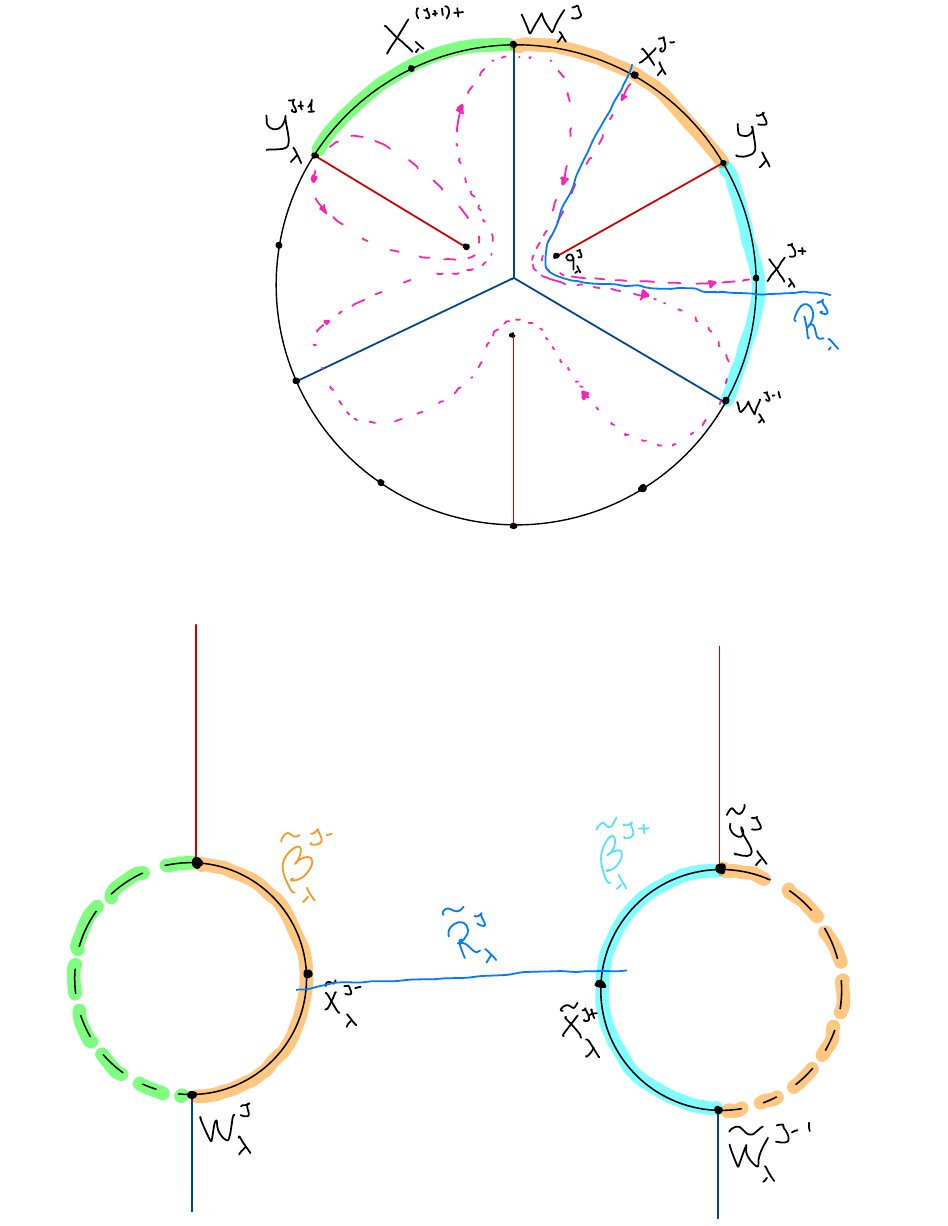}
    \caption{The points $y\la^j,\,\,w\la^j$ and $x\la^j$.}
    \label{fig:BF}
\end{figure}
The indexing being in the counter clockwise direction with $y^0_\ilnz$ 
in the sector bounded by $\overline{R^0_\ilnz\cup R^1_\ilnz}$, 
with $w^0_\ilnz$ between $y^0_\ilnz$ and $y^1_\ilnz$ and 
with $w^{j-1}_\ilnz<x^{j+}_\ilnz<y^j_\ilnz<x^{j-}_\ilnz<w^j_\ilnz$ for $0<j<q$. 
Then $x^{j+}_\ilnz$, $x^{j-}_\ilnz$ are the points on the circle, where the vector field is orthogonal to 
the circle (see Figure \ref{fig:BF}). These points lie asymptotically in the repelling and attracting directions respectively, 
so that, possibly reducing $r_0>0$, we can assume 
$$
\left|\Arg\left(\frac{x^{j+}_\ilnz}{\e^{(j-1)2\pi/q}}\right)\right|, 
\left|\Arg\left(\frac{x^{j-}_\ilnz}{\e^{(2j-1)\pi/q}}\right)\right|\leq \frac{2\pi}{24q}
\qquad\textrm{for all } 0<j<q.
$$
The functions $\varphi_\ilnz$, $\ilnz = \lambda_0, \nu_0$, 
define Riemann surfaces such that 
the inverse of $\varphi_\ilnz$ are universal coverings of $\D^*(0,r_0) = W_{\ilnz,r_0}$.\\

For $\lambda\not=\lambda_0$ and $\nu\not=\nu_0$ the situation is more complicated, 
but similar for the two families $f\la$ and $g\nn$. 
Around the fixed point $0$ there are topological punctured disks $D_\iln^*(0)$, 
corresponding to topological lower half planes $H_-$, and universal covering maps 
$\psi_\iln : H_- \to D_\iln^*(0)$ with deck-transformations $Z\mapsto Z + 2\pi i/\Lambda$ 
and $Z\mapsto Z + 2\pi i/N$ respectively, 
and such that $\varphi_\iln\circ\psi_\iln(Z) = Z$ for the appropriate choice of 
analytic continuation of $\varphi_\iln$. 
Similarly around any point $q_\iln^j$ in the $q$-cycle there are 
topological punctured disks $D^*(q_\iln^j)$, corresponding to topological upper half planes $H_+$, 
and universal covering maps $\psi_\iln^j : H_+ \to D^*(q_\iln^j)$ 
with deck-transformations $Z\mapsto Z + 2\pi i/Log(\rho_\iln)$,
and such that $\varphi_\iln\circ\psi_\iln(Z) = Z$ for the appropriate choice of 
analytic extension of $\varphi_\iln$. 
When $\lambda\in\Delta(\whR)$, $\nu\in\Xi(\whR)$
such upper and lower half-planes are connected by vertical strips, and 
there are common determinations of $\psi_\iln$ on such half planes and connecting strips.
To explain the picture, we will take an excursion into parabolic implosion.

\subsubsection{Parabolic implosion in a nut shell}
The connection between the parabolic limits $f_{\lambda_0}$ and $g_{\nu_0}$ and the 
nearby maps $f\la$ and $g\nn$ respectively can be understood via the theory of parabolic implosion. 
For $f\l0(z)=z+z^{q+1}+O(z^{q+2})$ and $0\leq j < q$
let $\psi_{\lambda_0,rep}^j$ be a repelling Fatou parameter for the petal containing the lower end 
of the ray $R\l0^j$ and let $\phi_{\lambda_0,att}^j$ be an attracting Fatou coordinate on $B\l0^j$. 
Thus the associated petals are neighbours and the attracting petal for $\phi_{\lambda_0,att}^j$, 
which is a subset of $B\l0^j$, 
follows the repelling petal for $\psi_{\lambda_0,rep}^j$ 
in the counter clockwise direction around $\alpha_0$.
To simplify statements we shall suppose these normalized at least partially, 
so that $\phi_{\lambda_0,att}^{(j+p)\mod q}\circ P\l0 = \frac{1}{q} + \phi_{\lambda_0,att}^j$ 
and $P\l0\circ\psi_{\lambda_0,rep}^j = \psi_{\lambda_0,rep}^{(j+p)\mod q}(z+\frac{1}{q})$. 

By Lavaurs theorem (\cite{La}), if $\rho_n\to 1$, $k_n \rightarrow \infty$ and 
$k_n -\frac{2\pi i}{Log \rho_n} \rightarrow \delta \in \C$, 
then $\lambda_n = \lambda(\rho_n) \rightarrow \lambda_0$ and 
$f_{\lambda_n}^{k_n} \rightarrow L^j_{\delta} = L^j_{\lambda_0,\delta}:= \psi_{\lambda_0,rep}^j\circ T_{\delta}\circ \phi_{\lambda_0,att}^j$ 
uniformly on compact subsets of the parabolic basin $B^j\l0$. 
Moreover for each $\delta$ it satisfies 
$P\l0\circ L_\delta = L_\delta\circ P\l0$. 
In this case, also the dynamical rays converge. 
That is, if $f_{\lambda_n}^{k_n} \rightarrow L^j_{\delta}$, then $R_{\lambda_n}^j$  
converges Hausdorff to a generalized ray 
$R_\delta^j = R^j_{\lambda_0,\delta} = f_{\lambda_0}(R_\delta^j)$, 
where $R_\delta^j$ consists of $R\l0^j$ and a collection of $q$ Hawaiian earrings, 
one in each $B^{j'}\l0$,  $0\leq j' < q$,
(see \cite{La}, Proposition 4.1.7, \cite{K1}, Proposition 3.3 and \cite{PZ1}, Thm A \& B). 
Note that for $(\lambda_n)_n$ as above $f_{\lambda_n}^{k_n+1} \to L^j_{\delta+1} = f\l0\circ L^j_\delta$.
Hence possibly adjusting $k_n$ by a uniformly bounded sequence we shall assume $0\leq\Re(\delta)<1$.

In more detail, the connected components of $R_\delta^j\setminus\{\al\}$ are 
$R^j_{\lambda_0}$ and 'homoclinic' arcs in the parabolic basin of $\al=0$. 
Any two distinct homoclinic arcs are disjoint and 
each homoclinic arc $l$ is horo-cycle-like with base point $\al$, so $l\cup\{\al\}$ is a loop, bounding a disk in one of the $q$ connected components of the immediate parabolic basin. 
Loops in any fixed component of the immediate parabolic basin are nested and 
form a 'Hawaiian earring' consisting of infinitely many loops. 
We will then call a homoclinic arc $l$ a 'single earring'.
On each single earring $l$ the map $f\la$ acts like a stabilizing parabolic and 
the map $L_\delta$ acts like a dilation shifting single earrings one level out. 

In further detail, there is a univalent restriction of $\phi^j_{\lambda_0,att}$ 
defined on a neighbourhood in $B^j_{\lambda_0}$ of the relatively closed disk bounded 
by the loop of the outermost single earring $R_{\delta}^{j,1} = R_{\lambda_0, \delta}^{j,1}$ 
of $R^j_\delta\cap B^j\l0$ with image a $T_1$-invariant upper half plane. 
It sends each individual single earring to a $T_1$-invariant curve 
and in particular it sends $R_{\delta}^{j,1}$ to the boundary of the upper half plane. 
Moreover the translation $T_\delta$ shifts such images curves one level down and sends $R_{\delta}^{j,1}$ 
to the $T_1$ invariant connected component of $\psi_{\lambda_0,rep}^{-1}(R^j_{\lambda_0})$. 
In particular $L^j_\delta =
\psi^j_{\lambda_0,rep}\circ T_{\delta}\circ \phi^j_{\lambda_0,att}$ 
maps the single earring $R_{\delta}^{j,1}$ diffeomorphically onto the ray $R^j\l0$ and the parametrization of the ray by potential 
$t\mapsto R^j\l0(t)$ induces a parametrization 
$t\mapsto R_{\delta}^{j,1}(t)$ by 
$L^j_\delta(R_{\delta}^{j,1}(t)) = R^j\l0(t)$.
Moreover, the functional relation 
$f\l0^q(R_{\delta}^{j,1}(t)) = R_{\delta}^{j,1}(2^qt)$ 
persists.

The single earring $R_{\delta}^{j,1}$ leaves (in the direction of increasing potential) 
the fixed point $0$ tangentially to the repelling direction 
$2j\pi/q$ and arrives at $0$ again tangentially to the attracting direction $(2j-1)\pi/q$. 
In particular, possibly decreasing $r_0>0$ we can suppose that 
each ray $R^j\l0$ enters $\D(0, r_0)$ (in the direction of decreasing potential)  with an argument deviating from $2(j-1)\pi/q$ by at most 
$2\pi/(24q)$ and for any $\delta$ with $0<\Im(\delta) < 2C_1$ (where $C_1$ is the constant from Lemma~\ref{difmult}) the single earring $R_{\delta}^{j,1}$ exits $\D(0, r_0)$ (in the direction of decreasing potential) with an argument deviating from $(2j-1)\pi/q$ by at most $2\pi/(24q)$. 
Each of the rays $R^j_{\lambda_n}$ in the $q$-cycle of rays gives rise to similar limits 
$R_\delta^j$. And in each immediate component of the parabolic basin we have $q$ 
interlaced Hawaiian earrings. 

In what follows we shall only be interested in the sub-segments of the rays $R_{\lambda_n}^j$ 
which approach (part of) the outermost single earring $R_{\delta}^{j,1}$
in the Hawaiian earring of $R_\delta^j$. 
More precisely, fix some $t_0>0$ and any compact interval 
$I\subset ]0, \infty[$ containing $t_0$. 
Then $t\mapsto R_{\lambda_n}^j(t/2^{qk_n})$ 
converges uniformly to $t\mapsto R_{\delta}^{j,1}(t)$ on $I$.

All of the above are found similarly for $g\nn$ with Lavaurs map $L^j_{\nu_0, \delta}$ 
and limit rays $R^j_{\nu_0, \delta}$ and $R^{j,1}_{\nu_0, \delta}$.\\

After this small excursion into Lavaurs theory of parabolic implosion, we return to the main vein.
In what follows we shall work mostly in the near-Douady-Fatou coordinates given by 
the rectifying coordinates for the Buff vector fields instead of the actual Douady-Fatou coordinates (see \cite{Ch} and \cite{PZ}). 
We start by defining useful domains for their inverses, the near-Douady-Fatou parameters.

Let $r_1$ be arbitrary with $0< r_1 \leq r_0$. 
Since $v\la d/dz$ depends holomorphically on $\lambda\in\D(\lambda_0,\whR)$ and 
$v\l0$ is zero free in the punctured disk $\D(r_0)\setminus\{0\}$
we can suppose, possibly reducing $\whR>0$, that $v\la$ is zero free on the annulus 
$\Dbar(0,r_0)\setminus\D(0,r_1)$,  
that $||\omega_\lambda-\omega_{\lambda_0}|| < 1/(2\pi r_0)$ on this annulus, and that for $r\in[r_1, r_0]$ the curvature of $\varphi\la(r\e^{it})$ is negative, 
and that this also holds for $v\nn, \omega\nn$ and $\varphi\nn$.
Thus for $\lambda\in\D(\lambda_0, \whR)$, $\nu\in\D(\nu_0, \whR)$ and $\iln = \lambda, \nu$, there are unique continuous determinations 
$y^j_\iln(r), w^j_\iln(r), x^{j+}_\iln(r), x^{j+}_\iln(r)\in S_{r}$, $r\in[r_1, r_0]$, $1\leq j \leq q$, 
with $iy^j_\iln(r)/(v_\iln(y^j_\iln(r))$ real and positive, 
$iw^j_\iln(r)/v_\iln(w^j_\iln(r))$ real and negative, 
$ix^{j+}_\iln(r)/v_\iln(x^{j+}_\iln(r))$ purely imaginary and pointing upwards and  
$ix^{j-}_\iln(r)/v_\iln(x^{j-}_\iln(r))$ purely imaginary and pointing downwards. 
By the proximity of $\omega_\iln$ to $\omega_\ilnz$ we can assume, possibly reducing $\whR>0$, that 
$$
\left|\Arg\left(\frac{x^{j+}_\iln}{x^{j+}_\ilnz}\right)\right|, 
\left|\Arg\left(\frac{x^{j-}_\iln}{x^{j-}_\ilnz}\right)\right|\leq \frac{2\pi}{24q}
\qquad\textrm{for all } 0<j<q.
$$

Choosing $r_1>0$ sufficiently small, $\whR$ accordingly, and normalizing $\varphi_\iln$ 
($\iln = \lambda, \nu$, with
$\lambda\in\D(\lambda_0,\whR)$ and $\nu\in\D(\nu_0,\whR)$)
by $\varphi_\iln(y^j_\ilnz) = \varphi_\ilnz(y^j_\ilnz)$, 
we obtain from the proximity of $\omega_\iln$ to $\omega_\ilnz$ and the bounds \eqref{basic_coordinate_estimates} that, possibly reducing $\whR>0$, 
\begin{equation}\label{coordinate_estimates}
|\varphi_\iln(z\e^{it}) + \frac{1}{qz^q\e^{iqt}}| \leq \frac{1}{10}\frac{1}{q|z|^q}
\end{equation}
for all $z, r_1 \leq |z| \leq r_0$ and $t, |t|\leq\pi$. 

For $\lambda\in\Delta(\whR)$, $\nu\in\Xi(\whR)$, and 
$\iln = \lambda, \nu$, the integral curves of the vector fields $iv_\iln(z) d/dz$, $\e^{i\pi/4}v_\iln(z) d/dz$ 
and $\e^{i3\pi/4}v_\iln(z) d/dz$ starting at the point $y^j_\iln = y^j_\iln(r_0)$ all converge to $q^j_\iln$ 
as time increases to $\infty$. 
The images of these curves under $\varphi_\iln$ 
are the upward pointing half-lines from $\wty^j_\iln = \varphi_\iln(y^j_\iln)$ 
with direction vectors $i$, $\e^{i\pi/4}$ and $\e^{i3\pi/4}$ respectively. 
We denote by $\wtl_\iln^j(r_0)$ the vertical half line $\wty^j_\iln+i\R_+$. 
On the other hand, at $\iln=\ilnz = \lambda_0, \nu_0$, these integral curves all converge to 
$0 = \alpha_\ilnz$ as time increases to $\infty$. 

Similarly, the integral curves for the vector fields $-iv_\iln(z) d/dz$, $\e^{-i\pi/4}v_\iln(z) d/dz$ 
and $\e^{-i3\pi/4}v_\iln(z) d/dz$ starting at $w^j_\iln = w^j_\iln(r_0)$ all
converge to $0$ as time increases to $\infty$, and their images under $\varphi_\iln$ 
are the downwards pointing half-lines from $\wtw^{j-1}_\iln = \phi_\iln(w^{j-1}_\iln)$ 
with direction vectors $-i$, $\e^{-i\pi/4}$ and $\e^{-i3\pi/4}$. 
Recall that $w^{j-1}_\iln$ and $w^j_\iln$ are neighbours of $y^j_\iln$ on the circle $S=S_r,\,\,r \in [r_1,r_0]$.
The analytic continuation of $\varphi_\iln$ 
along the arc of circle $[y^j_\iln, w^{j-1}_\iln]_S$ from $y^j_\iln$ to $w^{j-1}_\iln$ 
maps this arc to a curve $\wtbe^{j+}_\iln$, which connects $\wtw^{j-1}_\iln$ to $\wty^j_\iln$ 
and which is close to the left semi-circle of radius $1/(qr^q)$ and center $\wty^j_\iln-i/(qr^q)$. 
And the analytic continuation of $\varphi_\iln$ along the arc of circle $[y^j_\iln, w^j_\iln]_S$ 
from $y^j_\iln$ to $w^j_\iln$ maps this arc to a curve close to the right semi-circle 
of radius $1/(qr^q)$ and center $\wty^j_\iln-i/(qr^q)$. 
Denote by $\wtbe^{j-}_\iln(r_0)$ the curve $\varphi_\iln([y^j_\iln, w^j_\iln]_S)-\frac{-2\pi i}{\Log(\rho_\iln)}$. 

For a(ny) choice of base point $z_0$ we construct 
plane separating Jordan arcs $\Gamma_\iln^{j\pm}$ and $\Upsilon_\iln^{j\pm}$ by (see Figure \ref{fig:G})
\begin{align*}
\Gamma_\iln^{j+} &:= (\e^{i\pi/4}\R_+ + \wty^j_\iln)\cup \wtbe^{j+}_\iln \cup 
(\wtw^{j-1}_\iln+\e^{-i\pi/4}\R_+),\\
\Upsilon_\iln^{j+} &:= \wtl^j_\iln \cup \wtbe^{j+}_\iln \cup 
(\wtw^{j-1}_\iln-i\R_+),
\end{align*}
for $\iln\not= \iln_0$, i.e.~$\lambda\in\Mpqr(\whR)$ and $\nu\in\Mppqr(\whR)$ we further define
\begin{align*}
\Gamma_\iln^{j-} &:= T_{-\frac{i2\pi}{\Log\rho_\iln}}\left((\e^{i3\pi/4}\R_+ + \wty^j_\iln)
\cup 
(\wtw^{j-1}_\iln +\e^{-i3\pi/4}\R_+)\right)\cup \wtbe^{j-}_\iln ,\\
\Upsilon_\iln^{j-} &:= T_{-\frac{i2\pi}{\Log\rho_\iln}}
\left(\wtl^j_\iln \cup (\wtw^{j-1}_\iln-i\R_+)\right) \cup \wtbe^{j-}_\iln,
\end{align*}
where as for $\ilnz = \lambda_0, \nu_0$ we define $\wtbe^{j-}_\ilnz(r_0)$ to be the curve 
$\varphi_\ilnz([y^j_\ilnz, w^j_\ilnz]_S)$ and 
\begin{align*}
\Gamma_\ilnz^{j-} &:= (\e^{i3\pi/4}\R_+ + \wty^j_\ilnz) \cup \wtbe^{j-}_\ilnz \cup 
(\varphi_\ilnz(w^j_\ilnz)+\e^{-i3\pi/4}\R_+),\\
\Upsilon_\ilnz^{j-} &:= \wtl^j_\ilnz \cup \wtbe^{j-}_\ilnz \cup (\varphi_\ilnz(w^j_\ilnz)-i\R_+).
\end{align*}
\begin{figure}[tbh]
    \centering
    \includegraphics[scale = 0.5]{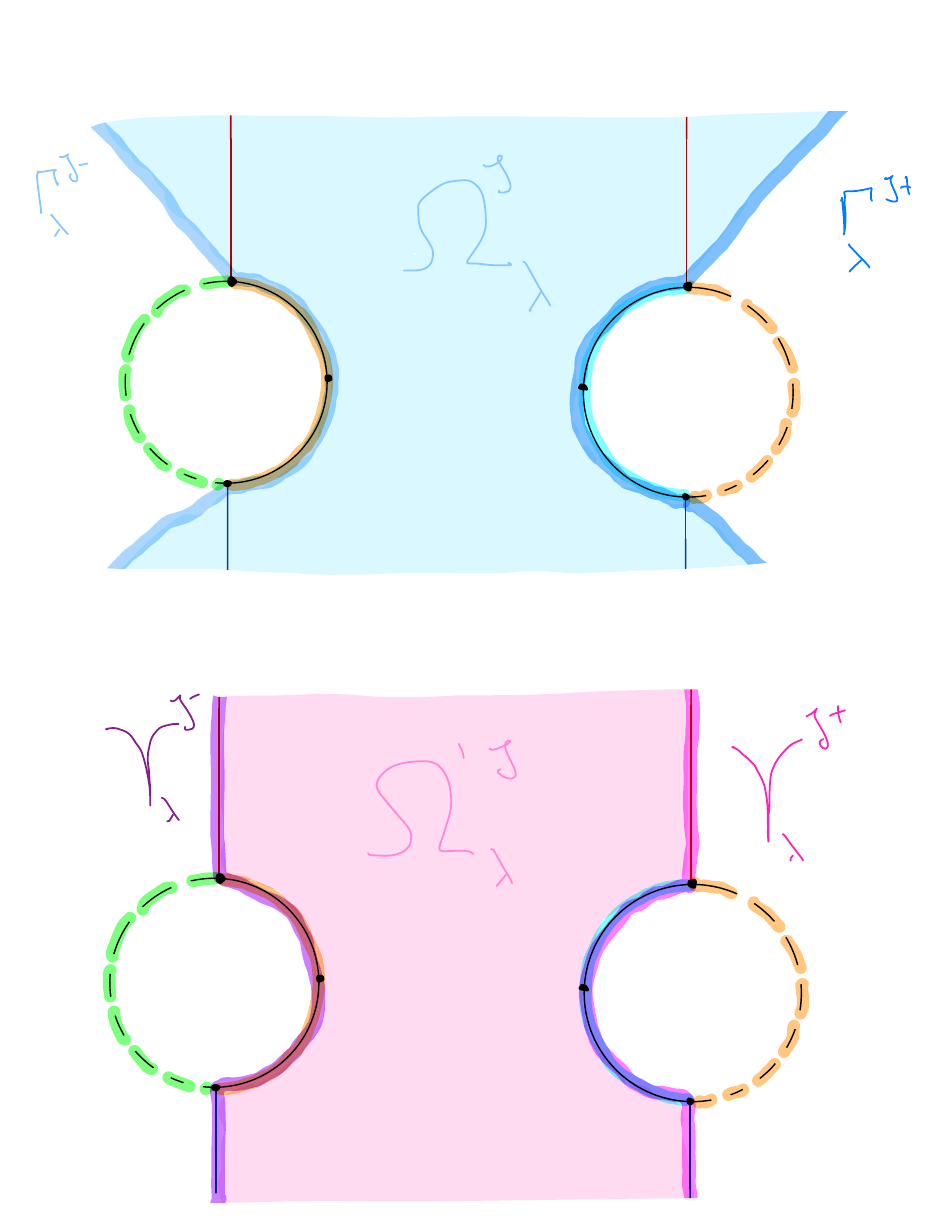}
    \caption{The domains $\Omega^j_\iln$ (above) and $\Omega^{'j}_\iln$ (below). The width of $\Omega^j_\iln$ and $\Omega^{'j}_\iln$ should be much bigger compared to the ray of the (approximate) discs bounding them, and the boundaries of $\Omega^j_\iln$ and $\Omega^{'j}_\iln$ should be more wiggling than what we managed to illustrate.}
    \label{fig:G}
\end{figure}
For $\lambda\in\Mpqr(\whR)$, $\nu\in\Mppqr(\whR)$ and $\iln = \lambda, \nu$ 
the pairs of arcs $\Gamma_\iln^{j+}$, and $\Gamma_\iln^{j-}$ bound a 'butterfly' domain $\Omega^j_\iln$, while the pairs of arcs
 $\Upsilon_\iln^{j+}$, and $\Upsilon_\iln^{j-}$ 
bound a 'vertical strip' subdomain $\Omega^{'j}_\iln\subset\Omega_\iln$ (see Figure \ref{fig:G}). 
On the other hand, for $\ilnz = \lambda_0, \nu_0$, the arcs $\Upsilon_\ilnz^{j+}$, $\Gamma_\ilnz^{j+}$ bound 
left half planes $\Omega^{'j+}_\ilnz\subset\Omega^{j+}_\ilnz$ and 
the arcs $\Upsilon_\ilnz^{j-}$, $\Gamma_\ilnz^{j-}$ bound 
right half planes $\Omega^{'j-}_\ilnz\subset\Omega^{j-}_\ilnz$. 
On any of the simply connected domains $\Omega^j_\iln$, $\Omega^{j+}_\ilnz$ and $\Omega^{j-}_\ilnz$
there exist a unique holomorphic local inverse $\psi_\iln$ of $\varphi_\iln$, and respectively $\psi_\ilnz$ of $\varphi_\ilnz$:
$$
\psi^j_\iln: \Omega^j_\iln \to \D(0,r_0), \qquad
\psi^{j+}_\ilnz: \Omega^{j+}_\ilnz \to \D(0,r_0), \qquad
\psi^{j-}_\ilnz: \Omega^{j-}_\ilnz \to \D(0,r_0), 
$$
with $\varphi_\iln\circ\psi^j_\iln(Z) = Z$ and $\varphi_\ilnz\circ\psi^{j\pm}_\ilnz(Z) = Z$ 
for the appropriate analytic extensions of $\varphi_\iln$ and $\varphi_\ilnz$ respectively. 
Note that $\psi^j_\iln$ is locally univalent, univalent on $\Omega^{'j}_\iln$, 
but not globally injective on $\Omega^j_\iln$. 
Note that a change in base point for $\varphi_\iln$, $\iln = \lambda, \lambda_0, \nu, \nu_0$ 
amounts to a translation of these domains. 
In particular the shape/geometry of the domains is independent of choice of base point. 
Also, since $x^{j+}\in[y^j_\iln, w^{j-1}_\iln]_S$ and $x^{j-}\in[y^j_\iln, w^j_\iln]_S$, 
the above also determines points $\wtx^{j\pm}_\iln\in\beta^{j\pm}_\iln$ with 
$\psi^j_\iln(\wtx^{j\pm}_\iln) = x^{j\pm}_\iln$. 

Possibly reducing $r_1>0$, we can suppose that $f\la(\Dbar(0,2r_1))\subset \D(0,r_0)$ for all 
$\lambda\in \D((\lambda_0, \whR)$ and that $g\nn(\Dbar(0,2r_1))\subset \D(0,r_0)$ for all 
$\nu\in \D(\nu_0, \whR)$. 
For $\lambda\in\Delta(\whR)\cup\{\lambda_0\}$, $\nu\in\Xi(\whR)\cup\{\nu_0\}$, $\iln = \lambda, \nu$ 
and for $r\in[r_1, r_0]$ we may thus construct 
$\Omega^j_\iln(r) \subset \Omega^j_\iln(r_0) := \Omega_\iln^j$ 
and $\Omega^{'j}_\iln(r) \subset \Omega^{'j}_\iln(r_0):=\Omega^{'j}_\iln$ 
analogously to $\Omega^{'j}_\iln\subset\Omega^j_\iln$, 
such that the restriction of $\psi^j_\iln: \Omega^j_\iln \to \D(0,r_0)$ to $\Omega^j_\iln(r)$ 
remains a local inverse of $\varphi_\iln$ (note that for $\iln = \lambda_0, \nu_0$ there are 
actually two domains, with super script $\pm$ for each $j$). 
Moreover applying \eqref{coordinate_estimates} 
we can assume that the distance between $\Omega^j_\iln(r_1)$ and the boundary 
of $\Omega^j_\iln(2r_1)$ is at least $4$.\\ 

Define holomorphic functions by
$$
1+u_\iln(z) = \int_{[z,f_\iln(z)]} \omega_\iln(\zeta) d\zeta, 
\qquad w_\iln(z) = u_\iln'(z)/\omega_\iln(z)
\qquad ]z| < 2r_1
$$
where $u_\iln(z)$ and $w_\iln(z)$ extend holomorphically to the fixed points with value $0$. 
Moreover $u_\iln$ and $w_\iln$ depend holomorphically on $\lambda\in\Delta(\whR)$ and 
continuously on $\lambda$ at $\lambda_0$, and respectively 
holomorphically on $\nu\in\Xi(\whR)$ and continuously on $\nu$ at $\nu_0$.
Then, possibly reducing $r_1>0$, we can suppose that the functions $u_\iln, w_\iln$ 
are uniformly bounded by, say, $1/4$.
Lift these functions to functions 
$U^j_\iln(Z) = u_\iln(\psi^j_\iln(Z))$, $W^j_\iln(Z) = w_\iln(\psi^j_\iln(Z))$ on 
$\Omega^j_\iln(2r_1)$, and $\Omega^{j\pm}_\iln(2r_1)$ respectively,
$\Omega^j\nn(C2r_1)$, 
and define the functions 
$$
F^j\la : \Omega^j\la(2r_1) \to \Omega^j\la(r_0)\qquad\textrm{and}\qquad
G^j\nn : \Omega^j\nn(2r_1) \to \Omega^j\nn(r_0)
$$ 
by 
$$
F^j\la(Z) = Z + 1 + U^j\la(Z)\qquad\textrm{and}\qquad
G^j\nn(Z) = Z + 1 + U^j\nn(Z)
$$ with derivatives 
$$
(F^j\la)'(Z) = 1 + W^j\la(Z)\qquad\textrm{and}\qquad
(G^j\nn)'(Z) = 1 + W^j\nn(Z)
$$ respectively.
Then $\psi^j\la\circ F^j\la = f\la\circ\psi^j\la$ and $\psi^j\nn\circ G^j\nn = g\nn\circ\psi^j\nn$, 
where both sides are defined. 
From the above bounds on the functions $u\la, w\la, u\nn$ and $w\nn$, and hence for 
$U^j\la, W^j\la, U^j\nn$ and $W^j\nn$, 
it follows that there exist globally defined Douady-Fatou coordinates 
$\Phi^j\la$ and $\Phi^j\nn$ for the functions $F^j\la$ and $G^j\nn$, 
where they are defined. 
Moreover these coordinates are uniformly close to translatations in the following sense: 

\begin{prop}\label{comparison}
There exists a constant $K = K(\whR,r_0, r_1)$ such that, 
for all $\lambda \in \Delta(\whR)\cup\{\lambda_0\}$ and $\nu\in\Xi(\whR)\cup\{\nu_0\}$, for all $j$,
 and for any $z\la^j\in\Omega^j\la(r_1)$ and $z\nn^j\in\Omega^j\nn(r_1),$ 
the Douady-Fatou coordinates $\Phi^j\la$ for $F^j\la$ and 
$\Phi^j\nn$ for $G^j\nn$, normalized by $\Phi^j\la(z^j\la) = z^j\la$ and $\Phi^j\nn(z^j\nn)=z^j\nn$, 
satisfy
$$
||\Phi^j\la(z) - z||, ||\Phi^j\nn(z) - z||  \leq K
\quad\textrm{and}\quad 
||(\Phi^j\la)'-1||, ||(\Phi^j\nn)'-1|| < \frac{1}{4}
$$ 
on $\Omega^j\la(r_1)$ and $\Omega^j\nn(r_1)$ respectively. 
\end{prop}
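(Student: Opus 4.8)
The plan is to reduce both inequalities to the single fact that, after a harmless shrinking of $r_1$ and $\whR$, the error functions $U^j_\iln$ and $W^j_\iln$ ($\iln=\lambda,\nu$) are \emph{uniformly} small on $\Omega^j_\iln(2r_1)$, and then to use that the Fatou coordinate of a holomorphic map uniformly close to the translation $T_1$ on a strip-type domain of controlled geometry is uniformly close to the identity. I treat the two families in parallel, and I may assume, as already noted in the text, that the Fatou coordinates $\Phi^j_\iln$ exist on the domains where $F^j_\iln$, $G^j_\iln$ are defined; the content of the statement is the estimates.

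\emph{Step 1 (uniform smallness).} Since $U^j_\iln=u_\iln\circ\psi^j_\iln$, $W^j_\iln=w_\iln\circ\psi^j_\iln$ and $\psi^j_\iln(\Omega^j_\iln(2r_1))\subset\D(0,2r_1)$, it suffices to bound $u_\iln,w_\iln$ on $\D(0,2r_1)$. The functions $u_\iln$ form a normal family on $\D(0,r_0)$ (holomorphic in the parameter, continuous at $\lambda_0,\nu_0$) and each vanishes at the fixed point $0$; by the Schwarz lemma $\sup_{\D(0,2r_1)}|u_\iln|\le C\,r_1/r_0$ with $C$ independent of $j$ and of the parameter. Likewise $w_\iln=u_\iln'/\omega_\iln$ with $u_\iln'$ uniformly bounded while $1/\omega_\iln=v_\iln$ vanishes at the fixed points clustered in $\D(0,2r_1)$ to total order $q+1$, so $\sup_{\D(0,2r_1)}|w_\iln|\le C\,r_1^{\,q+1}$. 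Hence for any prescribed $\varepsilon>0$ one may fix $r_1$ and then $\whR$ so small that $\|U^j_\iln\|,\|W^j_\iln\|\le\varepsilon$ on $\Omega^j_\iln(2r_1)$ for all $j$, all $\lambda\in\Delta(\whR)\cup\{\lambda_0\}$, all $\nu\in\Xi(\whR)\cup\{\nu_0\}$; I fix $\varepsilon$ with $e^{2\varepsilon}-1<\tfrac14$ once and for all, refining the bound $\tfrac14$ already in force for $u_\iln,w_\iln$. I also record that, because $F^j_\iln$ is built from the Buff rectifying coordinate of $f_\iln$ (cf.\ \cite{Ch},\cite{PZ}), $\psi^j_\iln(Z)$ tends to a fixed point of $f_\iln$ along each ``leg'' of $\Omega^j_\iln(2r_1)$, where $u_\iln$ vanishes; thus $U^j_\iln,W^j_\iln$ decay along the legs, and the rate (governed by the asymptotics of $\varphi_\iln$ and \eqref{coordinate_estimates}) makes the series appearing below convergent.

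\emph{Step 2 ($C^0$ and $C^1$ estimates).} Write $\Phi^j_\iln=\mathrm{id}+\sigma^j_\iln$, normalized by $\sigma^j_\iln(z^j_\iln)=0$; the conjugacy $\Phi^j_\iln\circ F^j_\iln=T_1\circ\Phi^j_\iln$ is the Abel equation $\sigma^j_\iln\circ F^j_\iln-\sigma^j_\iln=-U^j_\iln$. Equivalently, $\Phi^j_\iln$ is the uniformizing coordinate of the bi-infinite cylinder obtained by gluing a fundamental strip of $F^j_\iln$ inside $\Omega^j_\iln(2r_1)$ along $F^j_\iln$; since $F^j_\iln$ displaces points by $1+\OO(\varepsilon)$ and $\Omega^j_\iln(r_1)$ lies at distance $\ge4$ from $\partial\Omega^j_\iln(2r_1)$, such a fundamental strip exists and is $C^1$-close to a vertical strip of unit width, so the cylinder is $C^1$-close to $\C/\Z$. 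Solving the Abel equation by telescoping $-U^j_\iln$ along orbits (the series converging by the decay from Step 1), together with the uniformly bounded matching of the two local charts across the ``waist'' of the cylinder, yields $\|\sigma^j_\iln\|\le K$ on $\Omega^j_\iln(r_1)$ with $K=K(\whR,r_0,r_1)$ independent of $j$ and of the parameter; the same computation handles $G^j_\iln$. For the derivative, differentiating the conjugacy relation gives $\log(\Phi^j_\iln)'\circ F^j_\iln-\log(\Phi^j_\iln)'=-\log(1+W^j_\iln)$, with $\log(\Phi^j_\iln)'$ single-valued on the cylinder since $(\Phi^j_\iln)'$ is zero-free and of zero winding by the $C^0$ bound; telescoping $-\log(1+W^j_\iln)$ and using $|\log(1+w)|\le2|w|\le2\varepsilon$ gives $\|\log(\Phi^j_\iln)'\|<2\varepsilon$ on $\Omega^j_\iln(r_1)$, hence $\|(\Phi^j_\iln)'-1\|<e^{2\varepsilon}-1<\tfrac14$.

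\emph{Main obstacle.} The delicate point is making $K$ and the closeness $2\varepsilon$ genuinely \emph{uniform in the shape of the domain}: the butterfly domains $\Omega^j_\iln(2r_1)$ and the maps $F^j_\iln$ depend on $j$ and on the parameter and degenerate, as $\lambda\to\lambda_0$ and $\nu\to\nu_0$, to the pairs of half-planes $\Omega^{j\pm}_{\lambda_0},\Omega^{j\pm}_{\nu_0}$, where the cylinder opens up. I would control this by compactness: after recentering the base point $z^j_\iln$ at the origin, the family of triples $(\Omega^j_\iln(2r_1),F^j_\iln,z^j_\iln)$ is precompact for Carath\'eodory convergence of pointed domains together with locally uniform convergence of the maps; every subsequential limit is again a near-translation of the same type whose Fatou coordinate depends continuously on it (by a standard lifting/Rickman argument), and the normalizing points stay in a fixed compact set. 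A normal-families/contradiction argument then upgrades the pointwise estimates of Step 2 to the asserted uniform ones, and the estimate \eqref{coordinate_estimates} together with the control of the transition region from Lemma~\ref{difmult} are exactly what keep the limiting objects at $\lambda_0,\nu_0$ non-degenerate, so the argument survives the parabolic limit. The honest difficulty is thus concentrated in two places: establishing the leg-decay of $U^j_\iln,W^j_\iln$ precisely enough (Step 1) and the uniformity of the uniformization across the degenerating family (this paragraph).
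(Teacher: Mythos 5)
The paper's own ``proof'' of Proposition~\ref{comparison} is a one-line citation: the estimates are exactly Proposition~30 and Corollary~31 of Ch\'eritat's habilitation \cite{Ch} (see also \cite{PZ}), applied to the near-translations $F^j_\lambda=T_1+U^j_\lambda$, $G^j_\nu=T_1+U^j_\nu$ with $\|U\|,\|W\|\le 1/4$ on domains whose $r_1$-core is at distance $\ge 4$ from the boundary of the $2r_1$-domain. Your proposal instead attempts a self-contained proof, and its skeleton (Abel equation, telescoping of $-U$ and of $-\log(1+W)$ along orbits, gluing to a cylinder, compactness at the parabolic limit) is the standard architecture behind the cited results. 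So the route is genuinely different from the paper's, but the two quantitative facts you defer are not side issues: they \emph{are} the content of the cited propositions, and as written your argument does not establish them.

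Concretely: (i) the telescoped series $\sum_n U^j_\iln(F^n(Z))$ and $\sum_n\log\bigl(1+W^j_\iln(F^n(Z))\bigr)$ converge only if $U,W$ decay \emph{summably} along orbits. From $\varphi_\ilnz(z)\sim -1/(qz^q)$ one only gets a priori $\psi(Z)\to 0$ like $|Z|^{-1/q}$ along the legs, hence $u(\psi(Z))=\OO(|Z|^{-1/q})$ from $u(0)=0$ alone, and $\sum_n n^{-1/q}$ diverges. The summable rate is a special feature of the Buff form (its design cancels the logarithmic/residue term), and proving it requires the expansion of $\omega_f$ given in Section~\ref{cycle} pushed one order further than \eqref{coordinate_estimates}; your Step~1 asserts the rate rather than deriving it. (ii) The $C^0$ bound $\|\Phi^j_\iln-\mathrm{id}\|\le K$, with normalization at an \emph{arbitrary} point of $\Omega^j_\iln(r_1)$ and over domains whose horizontal width $D_\iln\to\infty$ as $\iln\to\ilnz$, cannot follow from $\|W\|\le\varepsilon$ alone: that only yields $|\Phi(z)-z|\le\varepsilon\cdot\mathrm{diam}$, which blows up. One needs the \emph{integrated} decay of $\Phi'-1$ across the whole (unboundedly wide and infinitely tall) domain to be uniformly bounded, which is again point (i) in integrated form. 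Your Carath\'eodory-compactness argument controls subsequences where the supremum of $|\Phi-\mathrm{id}|$ is attained at bounded distance from the recentered base point, but not the case where the two points separate as the domain degenerates, so it does not by itself close the uniformity. To make the proof complete you should either carry out the decay estimate for the Buff form explicitly (at which point both (i) and (ii) follow by direct summation/integration, with no need for the compactness argument), or do what the paper does and invoke \cite{Ch}.
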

\begin{proof}
It follows from Proposition 30 and Corollary 31 in \cite{Ch}. See also \cite{PZ}.
\end{proof}

Since by choice of $r_0>0$ none of the outermost single earrings in the $q$ Hawaiian earrings of
$R_\delta^j$ are compactly contained in $\D(r_0)$, 
and since $0\leq \Im(\frac{i2\pi}{\Log\rho\la}) < C_1$ in $\Mpqr(\whR)$,
we can assume, possibly reducing $\whR>0$, that
for any $\lambda\in\Mpqr(\whR)$
the ray $R\la^j$ (for $0<j<q$) enters and exists the disc $\D(r_1)$ at least once, and so it
cuts out a domain $A\la^j\subset \D(r_1)\setminus R\la^j$ that contains $q\la^j$ and no other fixed point for $f\la$ (see Figure \ref{fig:bf}). 
Recall that for $\lambda\in\Delta(\whR)$, $\nu\in\Xi(\whR)$ and $\iln = \lambda, \nu$ 
the curve $\psi^j_\iln(\wtl^j_\iln)$ is the trajectory of the vector field $+iv_\iln(z) d/dz$ emanating from 
$y_\iln^j(r_0)$ and converging to $q_\iln^j$ as time tends to $\infty$. 
The set $A^j_\iln\setminus\psi_\iln(\wtl_\iln^j)$ has a univalent preimage
$\wtA^j_\iln\subset\Omega^{'j}_\iln(r_1)$ under $\psi^j_\iln$ (see Figure \ref{fig:bf}). 
The set $\wtA^j_\iln$ is bounded below by the connected component $\wtR^j_\iln$ 
of $(\psi^j_\iln)^{-1}(R^j_\iln)$ connecting $\wtbe^{j+}_\iln(r_1)\subset\Upsilon^{j+}_\iln(r_1)$ 
and $\wtbe^{j-}_\iln(r_1)\subset\Upsilon^{j-}(r_1)$, and on the sides by the parts of $\wtbe^{j+}_\iln(r_1)$ and
$\wtbe^{j-}_\iln(r_1)$ above the touching points of $\wtR_\iln$ 
together with $\wtl^j_\iln(r_1)$ and $T_{\frac{-i2\pi}{\Log(\rho_\iln)}}(\wtl^j\la(r_1))$ respectively (see Figure \ref{fig:bf}).

\begin{figure}[tbh]
    \centering
    \includegraphics[scale = 0.5]{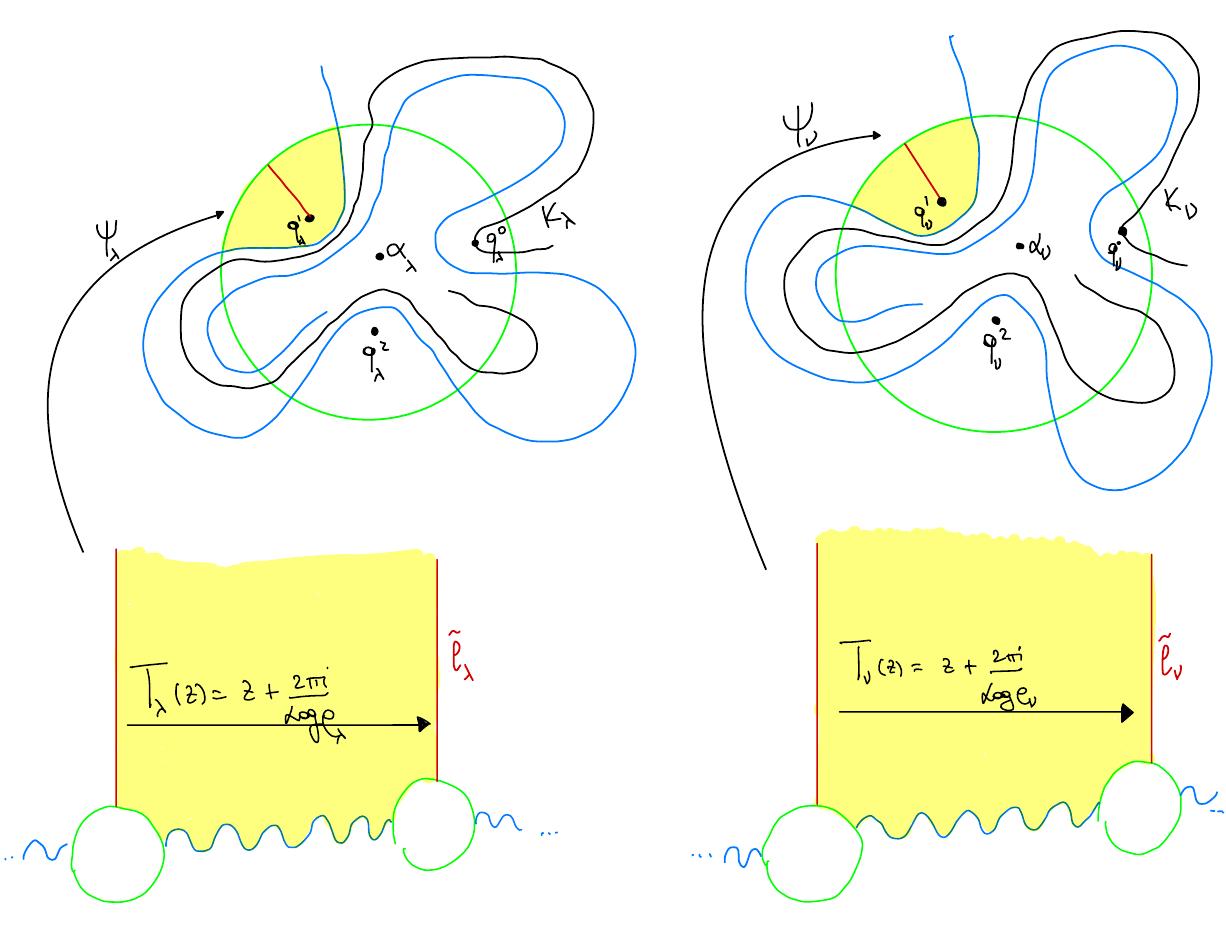}
    \caption{The domains $A^j_\iln$ (above) and $\wtA^j_\iln$ (below).}
    \label{fig:bf}
\end{figure}

For $\lambda\in\Mpqr(\whR)$ and $\nu\in\Mppqr(\whR)$ with $\xi(\lambda) = \nu$ 
we want to construct a quasi-conformal homeomorphism between 
substantial parts of $A^j\la$ and $A^j\nn$ respecting potential on the ray part of the boundaries 
and with dilatation uniformly bounded.
We will do this by constructing uniformly quasiconfomal homeomorphisms between 
the domains $\wtA^j\la$ and $\wtA^j\nn$ minus corners pieces, these corner pieces having images under $\psi^j\la$ and $\psi^j\nn$ respectively uniformly close to the boundary circle $S_{r_1}$. For this we will need some technical lemmas.

\begin{lemma}\label{fundamental_segment_bound}
 There exists $\check C >0$ such that, for every $\iln = \lambda, \nu$, 
  $\lambda\in\Mpqr(\whR)$, and $\nu\in\Mppqr(\whR)$ 
 and for every $0<j<q$:
 \begin{enumerate}
 \item
 any fundamental segment of $\wtR^j_\iln$, 
 i.e.~any piece of $\wtR^j_\iln$ between potentials $E$ and $2^qE$, has diameter at most $\check C$;
 \item
$\wtR^j_\iln$ is contained in a horizontal strip of height at most  $\check C + K$.
\end{enumerate}
\end{lemma}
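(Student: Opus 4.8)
The plan is to push everything into the Douady--Fatou coordinates $\Phi^j_\iln$ of Proposition~\ref{comparison}, in which $F^j_\iln$ becomes the exact unit translation $T_1$, and to use that in these coordinates all fundamental segments of the lifted ray are integer translates of one another, so that it suffices to estimate one well-placed one. Parametrise $R^j_\iln$ by potential, so $f_\iln(R^j_\iln(t))=R^j_\iln(2^qt)$; on $\wtR^j_\iln\subset\Omega^{'j}_\iln(r_1)\subset\Omega^j_\iln(2r_1)$ one has $\psi^j_\iln\circ F^j_\iln=f_\iln\circ\psi^j_\iln$, so, away from its two end-most fundamental segments, $F^j_\iln$ maps $\wtR^j_\iln$ into itself, carrying the arc over potentials $[E,2^qE]$ onto the one over $[2^qE,2^{2q}E]$. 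Put $\gamma^j_\iln:=\Phi^j_\iln(\wtR^j_\iln)$ and write $\gamma^j_\iln(t)$ for the image of $R^j_\iln(t)$. Then $\Phi^j_\iln\circ F^j_\iln=T_1\circ\Phi^j_\iln$ gives the \emph{exact} relation $\gamma^j_\iln(2^qt)=\gamma^j_\iln(t)+1$; equivalently, with $s:=\tfrac1q\log_2 t$, we have $\gamma^j_\iln(s)=\gamma^j_\iln(0)+s+\rho^j_\iln(s)$ with $\rho^j_\iln$ a $1$-periodic function of the real variable $s$, vanishing at $0$. This is meaningful because $\wtR^j_\iln$ spans many $s$-periods: its endpoints $\wtbe^{j\pm}_\iln(r_1)$ lie near the circle of radius $1/(qr_1^q)$, while $\wtbe^{j-}_\iln(r_1)$ is additionally translated by $-2\pi i/\Log(\rho_\iln)$, whose real part tends to $\infty$ as $\whR\to0$ by Lemma~\ref{difmult}.

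Both parts of the Lemma now reduce to a uniform bound, say by $M=M(\whR,r_0,r_1,q,K)$, on the diameter of $\rho^j_\iln(\R)$ (the oscillation of the $1$-periodic $\rho^j_\iln$). Indeed a fundamental segment of $\gamma^j_\iln$ is $\{\gamma^j_\iln(0)+s+\rho^j_\iln(s):s\in[s_0,s_0+1]\}$, hence has diameter $\le 1+M$; its image under $(\Phi^j_\iln)^{-1}$, a fundamental segment of $\wtR^j_\iln$, then has diameter $\le 1+M+2K$ by Proposition~\ref{comparison}; and $s\mapsto\Im\gamma^j_\iln(s)=\Im\gamma^j_\iln(0)+\Im\rho^j_\iln(s)$ being $1$-periodic, $\gamma^j_\iln$, and hence $\wtR^j_\iln$, lies in a horizontal strip of height $\le M+2K$. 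So $\check C:=1+M+2K$ gives the two stated inequalities. By $1$-periodicity it is enough to bound the diameter of a single fundamental segment of $\gamma^j_\iln$, and we choose the one corresponding (under the $T_1$-translation relation) to a fundamental segment of $R^j_\iln$ contained in the annulus $\{r_1\le|z|\le r_0\}$; such a segment exists, because $R^j_\iln$ passes through that annulus and, for $\whR$ small, a fundamental segment of $R^j_\iln$ changes $|z|$ only by the small factor $e^{\Re(\Lambda)}$, resp.\ $e^{\Re(N)}$, since $\Re(\Lambda),\Re(N)\to0$ as $\whR\to0$. On that annulus \eqref{coordinate_estimates} gives $|\varphi_\iln(z)|\le\tfrac{11}{10}\tfrac1{qr_1^q}$, so the corresponding fundamental segment of $\wtR^j_\iln$ lies in $\D(0,\tfrac2{qr_1^q})$, uniformly in $\iln$ and $j$; applying $\Phi^j_\iln$ (Euclidean displacement $\le K$) bounds that fundamental segment of $\gamma^j_\iln$, and hence $M$, by $\tfrac4{qr_1^q}+2K+1$.

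The one step that genuinely needs care is making precise that this annular fundamental segment is an integer $T_1$-translate of the fundamental segments of $\gamma^j_\iln$: this requires the Douady--Fatou coordinate $\Phi^j_\iln$, which Proposition~\ref{comparison} controls only on $\Omega^j_\iln(r_1)$, to extend along $R^j_\iln$ to (the lift of) the annulus $\{r_1\le|z|\le r_0\}$ via $\Phi^j_\iln\circ F^j_\iln=T_1\circ\Phi^j_\iln$, with the comparison bound surviving up to an additive constant coming from the finitely many steps of $F^j_\iln$ involved (each contributing at most $|U^j_\iln|\le1/4$). Everything else — the near-translation form $F^j_\iln(Z)=Z+1+U^j_\iln(Z)$, the coordinate estimate \eqref{coordinate_estimates}, and Proposition~\ref{comparison} — is already at hand; the remaining verifications (that $\wtR^j_\iln$ really spans more than one $s$-period, and the harmless extension of \eqref{coordinate_estimates} or relocation of the reference segment if the annular fundamental segment grazes $\{|z|=r_1\}$) are routine.
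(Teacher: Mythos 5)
Your proof is correct and follows the same basic route as the paper's: pass to the Douady--Fatou coordinate $\Phi^j_\iln$, exploit the exact $T_1$-equivariance of the image of $\wtR^j_\iln$ under the potential-multiplication by $2^q$, bound a single reference fundamental segment, and transfer back using Proposition~\ref{comparison}. The one place you diverge is in how that reference segment is bounded: the paper does it softly, by observing that the initial fundamental segments converge uniformly (as $\iln\to\ilnz$) to those of the $T_1$-invariant limit curve $\Phi^j_{\lambda_0}\circ\varphi_{\lambda_0}(R_{\lambda_0})$ and invoking compactness, whereas you do it explicitly via the coordinate estimate \eqref{coordinate_estimates} on the annulus $\{r_1\leq|z|\leq r_0\}$, which buys an effective constant $\check C$ in terms of $q, r_1, K$ at the cost of the extension issues for $\Phi^j_\iln$ outside $\Omega^j_\iln(r_1)$ that you correctly flag. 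One justification is off, though not fatally: the existence of a full fundamental segment of $R^j_\iln$ inside the annulus does not follow from ``$|z|$ changes only by the factor $e^{\Re(\Lambda)}$'' --- that multiplier governs the linearization at the fixed point $0$, not the dynamics on the annulus, where $f_\iln(z)\approx z+z^{q+1}$. The correct reason is the near-parabolic estimate itself (equivalently, the paper's arrangement that $\Omega^j_\iln(r_1)$ lies at distance at least $4$ from $\partial\Omega^j_\iln(2r_1)$, so several fundamental segments fit between the lifts of $S_{r_1}$ and $S_{2r_1}$); with that substitution your argument goes through.
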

\begin{proof}
The curve $\Phi^j\la(\wtR^j\la)$ is invariant under 
the translation $T_1$ and the initial fundamental segments converge uniformly 
to (translations of) those of the corresponding $T_1$ invariant curve 
$\Phi^j_{\lambda_0}\circ\varphi_{\lambda_0}(R_{\lambda_0})$ for $f_{\lambda_0}$. 
Hence such a bound is true in Douady-Fatou coordinates. 
By Proposition \ref{comparison} it also holds for the curves $\wtR^j\la$ 
at the expense of a possibly larger but uniform bound. The proof for $\nu$ is similar.
\end{proof}

From here and onwards we are going to fix the value of $r_1$, and so we will omit the argument/index $r_1$ of $x^{j+}_\iln(r_1),y^j_\iln(r_1), x^{j-}_\iln(r_1), w^j_\iln(r_1)$ and of $S_{r_1}$, as well as their preimages by a choice of $\psi^j_\iln$, 
$\iln = \lambda, \nu$. 
We define 
$$
m_0 := \frac{2}{qr_1^q}. 
$$
For $\lambda\in\Mpqr(\whR)$, $\nu\in\Mppqr(\whR)$ and $\iln = \lambda, \nu$, 
taking $\varphi_\iln(y^j_\iln) = \wty^j_\iln$ as above 
we define $\wtx^{j+}_\iln := \varphi_\iln(x^{j+}_\iln) \in\wtbe^{j+}_\iln$
to be the leftmost point of $\Upsilon^{j+}_\iln(r_1)$, and 
$\wtx^{j-}_\iln$ as the rightmost point of $\Upsilon^{j-}_\iln(r_1)$. 
Then $\wtx^{j-}_\iln := T_{-\frac{i2\pi}{\Log\rho_\iln}}\circ\varphi_\iln(x^{j-}_\iln) \in\wtbe^{j-}\la$, 
and the horizontal width of $\Omega_\iln'^j(r_1)$ and of $\Omega_\iln^j(r_1)$ is the positive number
$$
\Re(\wtx^{j+}_\iln- \wtx^{j-}_\iln)
\Re\left(\frac{i2\pi}{\Log(\rho_\iln)} - \int_{[x^{j+}_\iln, x^{j-}_\iln]_S} \omega_\iln(z) dz\right).
$$
Here the path of integration $[x^{j+}_\iln, x^{j-}_\iln]_S$ for the integral
is the subarc of $S$ from $x^{j+}_\iln$ via $y^j_\iln$ to $x^{j-}_\iln$.
It follows from \eqref{coordinate_estimates} 
that 
$$
\frac{9}{10}m_0 \leq
\Re\left(\int_{[x^{j+}_\iln, x^{j-}_\iln]_S} \omega_\iln(z) dz\right) 
\leq \frac{11}{10}m_0
$$

Hence we have the following bounds on the widths
$$
D_\iln-\frac{11}{10}m_0 \leq \Re(\wtx^{j+}_\iln - \wtx^{j-}_\iln)  \leq D_\iln - \frac{9}{10}m_0
$$
where 
$$
D_\iln = \left|\Re\left(\frac{i2\pi}{\Log(\rho_\iln)}\right)\right| 
$$
Then, since by Lemma \ref{rhos} $|D\la - D\nn| < \hat C$ when $\nu = \xi(\lambda)$,
we have the following common bound on the widths:
\begin{lemma}
Possibly reducing $\whR>0$ we can assume that, 
for all $\lambda\in\Mpqr(\whR)$ and $\nu = \xi(\lambda)\in\Mppqr(\whR)$, the horizontal widths of $\Omega^j\la(r_1)$ and $\Omega^j\nn(r_1)$  satisfy:
$$
D\la-\frac{11}{10}m_0 - \hat C \leq \Re(\wtx^{j+}\la - \wtx^{j-}\la),  \Re(\wtx^{j+}\nn - \wtx^{j-}\nn) 
\leq D\la-\frac{9}{10}m_0  + \hat C.
$$
\end{lemma}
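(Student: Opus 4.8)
The plan is to obtain this common estimate by a short combination of the one--sided width bounds just established from \eqref{coordinate_estimates} with the comparison $|D\la-D\nn|<\hat C$ recorded above (which is Lemma~\ref{rhos} together with the reverse triangle inequality applied to $D_\iln=|\Re(i2\pi/\Log\rho_\iln)|$). No new geometric input is needed; everything has in fact already been set up in the construction of the domains $\Omega^{'j}_\iln(r_1)$.

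First I would use that the inequality $D_\iln-\frac{11}{10}m_0\leq\Re(\wtx^{j+}_\iln-\wtx^{j-}_\iln)\leq D_\iln-\frac{9}{10}m_0$, derived from the coordinate estimates \eqref{coordinate_estimates}, is available both for $\iln=\lambda$ with $\lambda\in\Mpqr(\whR)$ and for $\iln=\nu$ with $\nu\in\Mppqr(\whR)$ --- and the hypothesis of the lemma is precisely that $\lambda\in\Mpqr(\whR)$ and $\nu=\xi(\lambda)\in\Mppqr(\whR)$, so both apply. The reduction of $\whR>0$ in the statement serves only to keep the constant $\hat C$ of Lemma~\ref{rhos} valid; if one prefers to read the lemma with $\xi(\Mpqr(\whR))\subseteq\Mppqr(\whR)$, this inclusion can be arranged by a further shrinking of $\whR$, using that $\xi$ is a homeomorphism of the satellite copies carrying $\lambda_0$ to $\nu_0$, hence continuous at the root, and that the arguments of $1/\Lambda$ and $1/N$ become arbitrarily close as $\lambda\to\lambda_0$ by the uniform bound $|1/\Lambda-1/N|<D$. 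For $\iln=\lambda$ the bound already lies inside the asserted interval, since $\hat C>0$ only widens it.

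Then I would replace $D\nn$ by $D\la$ at the cost of $\hat C$: from $|D\la-D\nn|<\hat C$ we get $D\la-\hat C<D\nn<D\la+\hat C$, and substituting into the $\iln=\nu$ bound puts $\Re(\wtx^{j+}\nn-\wtx^{j-}\nn)$ into the interval $\bigl[D\la-\frac{11}{10}m_0-\hat C,\ D\la-\frac{9}{10}m_0+\hat C\bigr]$, the same interval as for the $\lambda$--side. Together these two inclusions are exactly the claimed common bound. There is no genuine obstacle here: the whole content of the lemma is carried by Lemma~\ref{rhos} (equivalently, by the uniform closeness of $1/\Lambda$ and $1/N$) and by the coordinate estimates \eqref{coordinate_estimates}; the present statement is just their bookkeeping consequence, packaged so that the subsequent construction of uniformly quasiconformal maps between the $\wtA^j\la$ and $\wtA^j\nn$ can work with a single interval of widths.
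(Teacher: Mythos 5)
Your argument is exactly the one the paper intends: the lemma is stated immediately after the one-sided bounds $D_\iln-\tfrac{11}{10}m_0 \leq \Re(\wtx^{j+}_\iln-\wtx^{j-}_\iln) \leq D_\iln-\tfrac{9}{10}m_0$ and the observation that $|D\la-D\nn|<\hat C$ by Lemma~\ref{rhos}, and its content is precisely the substitution of $D\la\pm\hat C$ for $D\nn$ that you carry out. Correct, and essentially identical to the paper's (implicit) proof.
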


In the following we shall use that $f\la$ and 
$g\nn$ multiply the Green's potential by $2^q$, 
that $\psi_\iln$ conjugates $F_\iln$ to $f_\iln$ and respectively $G_\iln$ to $g_\iln$, that 
$$
|F\la(Z)-Z-1| = |U\la(Z)| \leq\frac{1}{4}, 
\qquad
|G\nn(Z)-Z-1| = |U\nn(Z)| \leq\frac{1}{4}
$$
that 
$$
\Phi\la\circ F\la = T_1\circ \Phi\la,
\qquad
\Phi\nn\circ G\nn = T_1\circ \Phi\nn
$$
and that Proposition~\ref{comparison} and the above implies that, whenever 
$Z, F\la(Z), \ldots , F\la^n(Z)\in\Omega^j\la(r_1)$, 
then 
\begin{equation}
  |F\la^n(Z) - Z - n| \leq \min\{K, \frac{n}{4}\}.\label{Goodboundonlongiterates}  
\end{equation}

For each $0\leq j < q$ and $\ilnz = \lambda_0, \nu_0$ 
let $E^j_\ilnz$ be the maximum potential 
such that $R^j_\ilnz[0,E^j_\ilnz]\subset\overline{\D}(0, r_1)$. 
Moreover for $0\leq j < q$, $\ilnz = \lambda_0, \nu_0$ and $\iln = \lambda, \nu$ 
let $z^j_{\iln}\in R^j_\iln$ be the unique point at potential $\whE^j_\ilnz := E^j_\ilnz/2^{2q}$. 
Then $z^j_{\iln}$ converges to $z^j_{\ilnz}$ when $\iln\to\ilnz$.
Let $\whE_0$ be the minimum over the $\whE^j_\ilnz$ and 
$\whE^l$ the maximum value and define 
$$
m = \frac{\log(\whE^l - \whE_0)}{q\log 2} + 2.
$$ 
Possibly reducing $\whR>0$ we can suppose that 
for all $0< j < q$ and $\iln = \lambda, \nu$
$$
R^j_\iln([\whE_0/2^q, \whE_0]) \subset \D(0, r_1)
\qquad\textrm{and}\qquad
|z^j_\iln-z^j_\ilnz|<1.
$$

To fix the ideas let $\varphi^j_\iln$ be normalized so that $\wtx^{j+}_\iln = \varphi^j_\iln(x^{j+}_\iln) = 0$ 
for $\iln = \lambda, \nu$, where $\lambda\in\Mpqr(\whR)$, $\nu\in\Mppqr(\whR)$, 
and similarly 
$\wtx^{j\pm}_\ilnz = \varphi^{j\pm}_\ilnz(x^{j\pm}_\ilnz) = 0$ 
for $\ilnz = \lambda_0, \nu_0$.
Define $\wtz^j_{\iln} = \varphi^j_\iln(z^j_{\iln})$ and $\wtz^j_{\ilnz} = \varphi^{j+}_\ilnz(z^j_{\iln})$. 
Then $|\Arg(z^j_\ilnz/x^{j+}_\ilnz)| \leq \pi/6$ and thus 
$$
-5/2 = -\frac{2\cdot 5}{4} \leq 
\Re(\wtz^j_{\ilnz}-\wtx^{j+}_\ilnz) 
\leq -\frac{2\cdot 3}{4} + \left(1-\frac{1}{2}\sqrt{3})\right)\frac{m_0}{2} < 
\frac{m_0}{5} -\frac{3}{2}.
$$
and hence, since $\wtx^{j+}_\iln = \wtx^{j+}_\ilnz = 0$ and $|z^j_\iln-z^j_\ilnz|<1$
$$
-7/2 \leq 
\Re(\wtz^j_{\iln}-\wtx^{j+}_\iln) < \frac{m_0}{5}.
$$

From here we shall work with pairs $(\lambda, \nu)$, where 
$\lambda\in\Mpqr(\whR)$ and $\nu = \xi(\lambda)\in\Mppqr(\whR)$. 
In particular we shall use $\lambda$ as an index and assume $\nu = \xi(\lambda)$.
Define a constant
$$
m_1 = 2(m_0 + K + \check C + \hat C + 4 + m)
$$ 
and a function of $\lambda$
$$
m_{2,\lambda} = D\la - m_0 - m_1, 
$$
where $K = K(\widehat R, r_0, r_1)$ from Proposition~\ref{comparison}, 
$\hat C$ is from Lemma~\ref{rhos} and 
$\check C$ is from Lemma~\ref{fundamental_segment_bound}. 
For $\lambda\in\Mpqr(\whR)$, $\nu = \xi(\lambda)\in\Mppqr(\whR)$ and $\iln = \lambda, \nu$ 
let $z^j_{1,\iln}\in R^j_\iln$ be the unique point at potential 
$\check E_1 =\frac{\whE_0}{2^{qm_1}}$ and 
let $z^j_{2,\iln}\in R^j_\iln$ be the unique point at potential 
$\check E_{2,\lambda} =\frac{\whE_0}{2^{qm_{2,\lambda}}}$. 
Moreover let $\wtz^j_{1,\iln},\wtz^j_{2,\iln}\in \wtR^j_\iln$ 
be the corresponding images under analytic continuation of $\varphi^j_\iln$ along rays. 

\begin{lemma}\label{estimates}
For all $\lambda\in\Mpqr(\whR)$, $\nu = \xi(\lambda)\in\Mppqr(\whR)$ and $\iln = \lambda, \nu$
$$
\wtR^j_\iln([\check E^j_{2,\lambda}, \check E_1]) \subset \Omega^j_\iln(r_1).
$$
Moreover 
$$
\frac{m_1}{2} \leq \Re(\wtx^{j+}_\iln-\wtz^j_{1,\iln})
\leq \frac{3}{2} m_1.
$$
and
$$
\frac{m_1}{2} \leq \Re(\wtz^j_{2,\iln} - \wtx^{j-}_\iln)
\leq \frac{3}{2} m_1.
$$
\end{lemma}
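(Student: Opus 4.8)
\emph{Proof proposal.} The plan is to carry everything over to the rectifying (Buff) coordinate $\varphi^j_\iln$, $\iln=\lambda,\nu$, in which $f\la$ (resp.\ $g\nn$) becomes $F^j_\iln(Z)=Z+1+U^j_\iln(Z)$ with $|U^j_\iln|\le 1/4$, and in which multiplying the Green potential along the ray $R^j_\iln$ by $2^q$ is exactly one application of $F^j_\iln$ (because $f\la=P\la^q$ multiplies potential by $2^q$ and fixes $R^j\la$ setwise, while $\varphi^j_\iln\circ f_\iln=F^j_\iln\circ\varphi^j_\iln$). Since $\Re F^j_\iln(Z)\ge\Re(Z)+3/4$, the map $F^j_\iln$ is fixed--point free on $\Omega^j_\iln(2r_1)$ and strictly advances the real part; together with \eqref{Goodboundonlongiterates} and Proposition~\ref{comparison} this gives the \emph{dictionary}: along any $F^j_\iln$--orbit segment $Z,F^j_\iln(Z),\dots,F^j_\iln{}^{\,n}(Z)$ contained in $\Omega^j_\iln(r_1)$ one has $|F^j_\iln{}^{\,n}(Z)-Z-n|\le K$, i.e.\ advancing $n$ fundamental segments along $\wtR^j_\iln$ shifts the real part by $n$ up to a uniform error $O(K)$. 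The crucial observation is that, by construction, $\wtR^j_\iln$ is the $F^j_\iln$--invariant component of $(\psi^j_\iln)^{-1}(R^j_\iln)$ joining the right wall $\wtbe^{j+}_\iln(r_1)$ to the left wall $\wtbe^{j-}_\iln(r_1)$ inside $\Omega^j_\iln(r_1)$; since $F^j_\iln$ always moves to the right, $\wtR^j_\iln$ is a single orbit arc running from one wall to the other, so the dictionary applies along all of it without any circular appeal to Claim~1.

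First I would fix the total length of $\wtR^j_\iln$. Its right endpoint $Z_+$ sits on $\wtbe^{j+}_\iln(r_1)$, within $O(m_0)$ in real part of $\wtx^{j+}_\iln$ (which we normalize to $0$), and its left endpoint $Z_-$ sits on $\wtbe^{j-}_\iln(r_1)$, within $O(m_0)$ of $\wtx^{j-}_\iln$; the $O(m_0)$ comes from $\wtbe^{j\pm}_\iln(r_1)$ being, by \eqref{coordinate_estimates}, close to semicircles of radius $1/(qr_1^q)=m_0/2$. Applying the dictionary between $Z_-$ and $Z_+$ and invoking the width estimate $\Re(\wtx^{j+}_\iln-\wtx^{j-}_\iln)=D_\lambda+O(m_0+\hat C)$ from the Lemma preceding this one, the number of fundamental segments of $\wtR^j_\iln$ is $D_\lambda+O(m_0+\hat C+K)$. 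Hence, writing $E_\pm$ for the potentials of $Z_\pm$, we have $E_+\asymp 2^{2q}\whE_0$ (by the choice of $\whE_0$ and the $E^j_\ilnz$) and $E_-=E_+\,2^{-q(D_\lambda+O(m_0+\hat C+K))}$.

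Claim~1 is then immediate: since $m_1\ge 1$, $\check E_1=\whE_0/2^{qm_1}\le\whE_0/2^q<E_+$, and since $m_{2,\lambda}=D_\lambda-m_0-m_1$ while $m_0+m_1$ dominates both $O(m_0+\hat C+K)$ and the bounded quantity $\log_{2^q}(\whE^l/\whE_0)$, we get $\check E_{2,\lambda}=\whE_0/2^{qm_{2,\lambda}}>E_-$; thus $[\check E_{2,\lambda},\check E_1]$ lies strictly inside the open potential range $(E_-,E_+)$ of the interior of $\wtR^j_\iln$, which says exactly that $\wtR^j_\iln([\check E_{2,\lambda},\check E_1])\subset\Omega^j_\iln(r_1)$. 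For Claims~2 and~3 I would run the dictionary from the already located point $\wtz^j_\iln$, which satisfies $-7/2\le\Re(\wtz^j_\iln)<m_0/5$. Passing from potential $\whE^j_\ilnz$ down to $\check E_1$ is $n_1=m_1+\log_{2^q}(\whE^j_\ilnz/\whE_0)$ backward steps, with $0\le\log_{2^q}(\whE^j_\ilnz/\whE_0)\le m$ by the choice of $m$; so $\Re(\wtx^{j+}_\iln-\wtz^j_{1,\iln})=-\Re(\wtz^j_{1,\iln})=n_1-\Re(\wtz^j_\iln)+O(K)$, which lies in $[\,m_1-\tfrac{m_0}{5}-O(K),\ m_1+m+\tfrac72+O(K)\,]$, and this is contained in $[\tfrac{m_1}{2},\tfrac{3m_1}{2}]$ because $m_1=2(m_0+K+\check C+\hat C+4+m)$ has its $m_0$--term tuned to swallow exactly such $O(m_0)$ errors. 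A further $n_2=m_{2,\lambda}-m_1=D_\lambda-m_0-2m_1$ backward steps carry $\wtz^j_{1,\iln}$ to $\wtz^j_{2,\iln}$, so combining with $\Re(\wtx^{j+}_\iln-\wtx^{j-}_\iln)=D_\lambda+O(m_0+\hat C)$ yields $\Re(\wtz^j_{2,\iln}-\wtx^{j-}_\iln)=\Re(\wtz^j_{1,\iln})+2m_1+O(m_0+\hat C+K)=m_1+O(m_0+\hat C+K+m)$, again inside $[\tfrac{m_1}{2},\tfrac{3m_1}{2}]$ by the size of $m_1$. Uniformity in $\lambda$, $j$ and $\iln=\lambda,\nu$ is automatic, since $K$ (Proposition~\ref{comparison}), $\hat C$ (Lemma~\ref{rhos}), $\check C$ (Lemma~\ref{fundamental_segment_bound}), $m_0$ and $m$ are.

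I expect the only genuine obstacle to be the apparent circularity between Claim~1 and the quantitative estimates: \eqref{Goodboundonlongiterates} is valid only on orbit segments that remain in $\Omega^j_\iln(r_1)$, so one must not secretly use Claims~2--3 to establish Claim~1. This dissolves once one notices that $\wtR^j_\iln$ is by construction a single $F^j_\iln$--orbit arc joining the two lateral walls and that $F^j_\iln$ is strictly right--advancing, so the dictionary holds along $\wtR^j_\iln$ unconditionally; everything after that is bookkeeping, the one point demanding care being that all the $O(m_0)$ errors (from the $\approx m_0/2$--size of $\wtbe^{j\pm}_\iln(r_1)$ and from the a priori location of $\wtz^j_\iln$) have $m_0$--coefficient at most $1$, so that the $m_0$ built into $m_1$ really does cover them.
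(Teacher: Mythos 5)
Your proposal is correct and follows essentially the same route as the paper's proof: both anchor at the point $\wtz^j_\iln$ (whose real part relative to $\wtx^{j+}_\iln$ is pinned down beforehand), convert the potential drops to $\check E_1$ and $\check E_{2,\lambda}$ into real-part displacements of roughly $m_1$ and $m_{2,\lambda}$ via \eqref{Goodboundonlongiterates}, and use the width bound in terms of $D_\lambda$, $m_0$ and $\hat C$ for the third inequality, with $m_1$ chosen large enough to absorb all errors. The only differences are cosmetic: you make explicit the first (inclusion) claim and the non-circularity issue, which the paper leaves implicit, and your $O(K)$ error terms should formally be $O(K+\check C)$ to account for the non-integer step counts (the paper's floors and its use of Lemma~\ref{fundamental_segment_bound}), a constant you do acknowledge in the definition of $m_1$.
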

\begin{proof}
We have from the above that 
$$
-\frac{m_0}{5} < \Re(\wtx^{j+}_\iln - \wtz^j_{\iln}) < 7/2
$$
And by \eqref{Goodboundonlongiterates} 
$$m_1 - K - \check C \leq \lfloor m_1\rfloor + 1 - K - \check C 
\leq\Re(\wtz^j_{\iln}-\wtz^j_{1,\iln}) 
\leq \lfloor m_1+m\rfloor  + K + \check C \leq m_1 + K + \check C + m
$$
and hence  
$$
\frac{m_1}{2} \leq m_1 - K - \check C - \frac{m_0}{2} \leq 
\Re(\wtx^{j+}_\iln-\wtz^j_{1,\iln}) 
\leq m_1 + K + \check C +7/2 + m
\leq \frac{3}{2} m_1.
$$
Similarly
$$
m_{2,\lambda} - K - \check C - \frac{m_0}{2} \leq\Re(\wtx^{j+}_\iln - \wtz^j_{2,\iln})  
\leq m_{2, \lambda}  + K + \check C +7/2 + m.
$$
and since $m_{2,\lambda} = D\la - m_0 - m_1$ we obtain
\begin{align*}
\frac{m_1}{2} &\leq
D\la - \frac{11}{10}m_0 - \hat C  - (D\la - m_0 - m_1 + K + \check C + 7/2 +m)\\
&\leq Re(\wtx^{j+}_\iln - \wtx^{j-}_\iln) -  \Re(\wtx^{j+}_\iln-\wtz^j_{2,\iln}) 
= \Re(\wtz^j_{2,\iln} - \wtx^{j-}_\iln)\\ 
&\leq D\la + \hat C  - \frac{9}{10}m_0 - (D\la - m_0 - m_1 - K - \check C - \frac{m_0}{2})
\leq \frac{3}{2} m_1.
\end{align*}
\end{proof}
\begin{figure}[tbh]
    \centering
    \includegraphics[scale = 0.5]{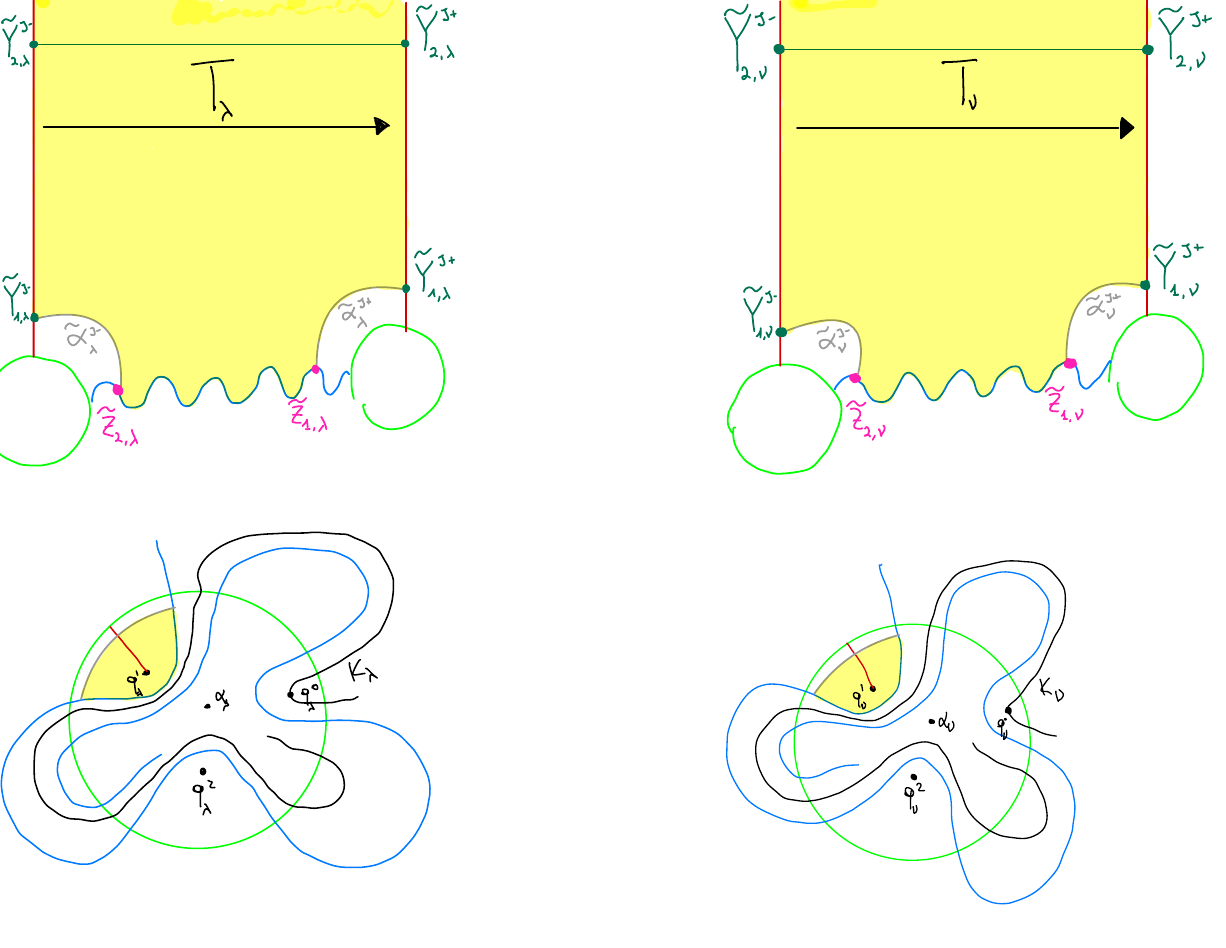}
    \caption{The domains $\wtA^{j,up}_\iln$ (above the straight lines connecting $\wtY^{j-}_{2,\iln}$ and $\wtY^{j+}_{2,\iln}$) and $\wtA^{j,down}_\iln$ (below the straight lines).}
    \label{fig:q}
\end{figure}
For $\lambda\in\Mpqr(\whR)$, $\nu = \xi(\lambda)\in\Mppqr(\whR)$ and $\iln = \lambda, \nu$
define
\begin{alignat*}{4}
&\wtY^{j+}_{1,\iln} &:= \wty^j_\iln+im_1,  \qquad
&\wtY^{j-}_{1,\iln} &:= T_{\frac{-i2\pi}{\Log(\rho_\iln)}}(\wtY^{j+}_{1,_\iln}),\\
&\wtY^{j+}_{2,\iln} &:= \wty^j_\iln+iD\la, \qquad
&\wtY^{j-}_{2,\iln} &:= T_{\frac{-i2\pi}{\Log(\rho_\iln)}}(\wtY^{j+}_{2,_\iln}),\\
&\wtX^{j+}_\iln &:= \wtY^{j+}_{1,_\iln}  - m_1,  \qquad
&\wtX^{j-}_\iln &:= \wtY^{j-}_{1,_\iln} + m_1.
\end{alignat*}

Let $\wtal^{j+}_\iln$ denote the hyperbolic geodesic in $\wtA^j_\iln$ 
connecting $z^j_{1, \iln}$ to $\wtY^{j+}_{1, \iln}$ and let 
$\wtal^{j-}_\iln$ denote the hyperbolic geodesic in $\wtA^j_\iln$ 
connecting $z^j_{2, \iln}$ to $\wtY^{j-}_{1, \iln}$ (see Figure \ref{fig:q}). 
Then these arcs have finite Euclidean lengths, 
because the boundary arcs of $\wtA^j_\iln$ at both end points are real analytic. 
We may henceforth parametrize these arcs by Euclidean arc-length 
increasing from zero at $z^j_{1, \iln}$ and $z^j_{2, \iln}$ respectively. 
Call $\wtA^{j,up}_\iln \subset \wtA^j_\iln$ the infinite vertical sub-strip bounded from below 
by the line joining $\wtY^{j-}_{2,\iln}$ and $\wtY^{j+}_{2,\iln}$ (see Figure \ref{fig:q}), and 
call $\wtA^{j,down}_\iln \subset \wtA^j_\iln$ the quadrilateral bounded above 
by the line joining $\wtY^{j-}_{2\iln}$ and $\wtY^{j+}_{2,\iln}$ and restricted further 
by the arcs $\wtal^{j-}_\iln$ and $\wtal^{j+}_\iln$. 
Then clearly the boundary homeomorphism, 
which maps the lower boundary line of $\wtA^{j,up}\la$ affinely 
to the lower boundary line of $\wtA^{j,up}\nn$ and maps the vertical boundary lines 
to the vertical boundary lines by translations has a quasi-conformal extensions
to the interiors, which is uniformly q.c.- in fact with complex dilatation tending to zero:
indeed such extension affinely maps each line parallel to the bottom line to the corresponding 
line parallel to the bottom line. 

The pair of subdomains $\wtA^{j,down}\la$ $\wtA^{j,down}\nn$, 
are two topological squares each with two corners torn off. 
Define a piecewise real analytic boundary map as follows. 
Between the top lines, which are the bottom lines of $\wtA^{j,up}\la$ $\wtA^{j,up}\nn$, 
it is already defined to be affine. 
On the vertical sides it consists of translations. 
On the bottom lifted rays it preserves potential. 
Finally  the corner arcs $\wtal^{j\pm}_\iln$
have finite Euclidean lengths, 
since they meet real-analytic boundary arcs of $\wtA^j_\iln$ orthogonally. 
Parametrize these by Euclidean arc-length with $\wtal^{j+}_\iln(0) = \wtz^j_{1,\iln}$ 
and $\wtal^{j-}_\iln(0) = \wtz^j_{2,\iln}$.
And we define the map between the $\alpha$-arcs so that it is affine in the parameters.

\begin{prop}\label{buffext}
The map between the boundaries of $\wtA^{j,down}\la$ and $\wtA^{j,down}\nn$, with $\lambda\in\Mpqr(\whR)$ and $\nu = \xi(\lambda)\in\Mppqr(\whR)$,
is uniformly quasisymmetric, and hence it extends to a $K$-q.c. homeomorphism 
with $K$ uniformly bounded.
\end{prop}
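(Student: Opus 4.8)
The plan is to exhibit the boundary correspondence as a uniformly quasisymmetric homeomorphism between two \emph{uniform} quasicircles and then to invoke the Beurling--Ahlfors extension (see \cite{BA}) to obtain a $K$-q.c.\ homeomorphism of the filled quadrilaterals, with $K$ depending only on $\whR, r_0, r_1$ and $q$. Fix $0<j<q$ (finitely many choices), $\lambda\in\Mpqr(\whR)$, $\nu = \xi(\lambda)\in\Mppqr(\whR)$, and write $\iln = \lambda,\nu$. The curve $\partial\wtA^{j,down}_\iln$ is the concatenation of six real-analytic arcs: the top segment $[\wtY^{j-}_{2,\iln},\wtY^{j+}_{2,\iln}]$, the two vertical segments $[\wtY^{j\pm}_{2,\iln},\wtY^{j\pm}_{1,\iln}]$, the two $\alpha$-arcs $\wtal^{j\pm}_\iln$, and the ray arc $\wtR^j_\iln([\check E_{2,\lambda},\check E_1])$, and the boundary map respects this splitting. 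First I would check that $\wtA^{j,down}_\iln$ is a uniform quasidisk, by showing that each of the six arcs is a quasi-arc with constant independent of $\lambda$ and that consecutive arcs meet at interior angles bounded away from $0$ and $2\pi$. Then I would show that the boundary map restricted to each arc is uniformly quasisymmetric onto the corresponding arc of $\partial\wtA^{j,down}\nn$. A standard gluing principle --- a homeomorphism which is quasisymmetric on each of finitely many boundary arcs meeting at definite angles, with matching vertex images, is globally quasisymmetric with a constant controlled by the individual ones --- then completes the quasisymmetry claim, and \cite{BA}, applied after uniformizing the two quasidisks, produces the uniformly $K$-q.c.\ extension to the interiors.

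For the quasidisk property: the top and vertical sides are straight segments, hence quasi-arcs with constant $1$ (their Euclidean lengths are $\OO(D\la)$, but the quasi-arc constant does not see this scale). The ray arc lies, by Lemma~\ref{fundamental_segment_bound}, in a horizontal strip of height $\leq\check C + K$ with fundamental segments of diameter $\leq\check C$; together with Proposition~\ref{comparison}, which says the near-Douady--Fatou coordinate $\Phi^j_\iln$ is uniformly $C^1$-close to the identity and conjugates $\wtR^j_\iln$ to a $T_1$-invariant curve, this forces the ray arc to be a uniform quasi-arc. The $\alpha$-arcs $\wtal^{j\pm}_\iln$ are hyperbolic geodesics of $\wtA^j_\iln$ joining points whose separation is comparable to the fixed constant $m_1$ (by Lemma~\ref{estimates}) and meeting real-analytic boundary arcs orthogonally; since the charts $\varphi_\iln,\psi_\iln$ are uniformly close to the fixed limiting charts $\varphi_\ilnz,\psi_\ilnz$, the domains near these corners are uniformly non-degenerate, so these geodesics have Euclidean length bounded above and below and uniformly bounded turning. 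The interior angles are $\pi/2$ at every vertex (straight sides meet at a right angle, geodesics meet the analytic boundary orthogonally), hence uniformly definite.

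For the boundary map: on the top segment it is affine with bi-Lipschitz constant tending to $1$ (using Lemmas~\ref{difmult} and \ref{rhos}) and on the two vertical segments it is a translation, so on these pieces it is uniformly quasisymmetric. On the $\alpha$-arcs it is affine in Euclidean arc-length between two uniform quasi-arcs of comparable length, hence uniformly quasisymmetric. The genuinely delicate piece is the ray arc, where the map preserves Green's potential: since $f\la$ and $g\nn$ both multiply the potential by $2^q$ and this multiplication becomes the unit translation $T_1$ in the coordinate $\Phi^j_\iln$, the potential-matching map, read in the coordinates $\Phi^j_\iln$, is a homeomorphism intertwining $T_1$ between two $T_1$-invariant real-analytic curves that are themselves uniformly $C^1$-close (via $\Phi^j_\iln$ and Proposition~\ref{comparison}) to $T_1$-invariant graphs inside a bounded horizontal strip; a Koebe-type distortion estimate exactly as in Lemma~\ref{cpthatzeta} then bounds, independently of $\lambda$, the distortion of its Euclidean arc-length reparametrization, so it is uniformly quasisymmetric, and composing with the uniformly bi-Lipschitz coordinate changes $\psi^j_\iln$ and $(\Phi^j_\iln)^{-1}$ preserves this. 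Assembling the pieces via the gluing principle yields a uniformly quasisymmetric boundary homeomorphism, and the extension follows.

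The step I expect to be the main obstacle is the uniformity in the parabolic-implosion regime $\lambda\to\lambda_0$: there $D\la\to\infty$, so the quadrilaterals $\wtA^{j,down}_\iln$ genuinely degenerate, and one must lean on the comparison with the parabolic limit --- Proposition~\ref{comparison}, the fundamental-segment bounds of Lemma~\ref{fundamental_segment_bound}, the multiplier estimates of Lemmas~\ref{difmult} and \ref{rhos}, and the description of the lifted rays as approximants of the outermost single earrings --- to guarantee that the quasi-arc constants of the six boundary arcs, the turning of the $\alpha$-arcs, and the quasisymmetry constant of the potential-matching map on the rays all stay bounded independently of $\lambda$. Everything else reduces to Koebe-type distortion bounds and the standard behaviour of quasisymmetric maps under gluing and uniformization.
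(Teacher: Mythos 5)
Your overall architecture (split $\partial\wtA^{j,down}_\iln$ into its six real-analytic arcs, prove uniform quasisymmetry piece by piece, glue at corners meeting at definite angles, then extend) is a legitimate alternative to what the paper does, and several of your pieces are fine: the top and vertical sides, the affine/translation parts of the map, and the $\wtal^{j+}$ corner, where the charts $\varphi_\iln$ really do converge to the fixed parabolic chart $\varphi_\ilnz$ and your ``uniformly close to the limiting charts'' justification is correct. The paper reaches the same conclusions there by a softer route: it shows the corner boundary maps lie in a \emph{compact family} of quasisymmetric maps, using Caratheodory convergence of the pointed domains $(\wtA^j_\iln, \wtX^{j+}_\iln)$ to $(\wtA^{j+}_\ilnz, \wtX^{j+}_\ilnz)$ and uniform convergence of the arcs $\Upsilon^{j+}_\iln$, $\wtR^j_\iln$, rather than explicit distortion estimates.

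The genuine gap is at the $\wtal^{j-}$ corner and the low-potential end of the ray arc. There your justification --- that $\varphi_\iln,\psi_\iln$ are uniformly close to the \emph{fixed} limiting charts $\varphi_\ilnz,\psi_\ilnz$, so the local geometry is uniformly non-degenerate --- is false as $\lambda\to\lambda_0$: that corner sits at $T_{-2\pi i/\Log\rho_\iln}(\wtY^{j+}_{1,\iln})$, which escapes to infinity, and the adjacent ray segment is at potential $\check E_{2,\lambda}=\whE_0/2^{qm_{2,\lambda}}\to 0$, so it does \emph{not} converge to the ray of $f_{\lambda_0}$ but to an outermost single earring $R^{j,1}_{\ilnz,\delta}$ of a Lavaurs limit $L^j_\delta$, with $\delta$ depending on the accumulation of $k_n-2\pi i/\Log\rho_n$ (kept in a compact set only because $0\le\Im(2\pi i/\Log\rho\la)<C_1$). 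You correctly flag this degeneration as ``the main obstacle,'' but the body of your argument does not resolve it: the uniform lower bound on the length and turning of $\wtal^{j-}_\iln$, and the uniform quasisymmetry of the potential-matching map near $\wtz^j_{2,\iln}$, are exactly what requires the parabolic-implosion input. This is precisely how the paper closes the gap: after recentering at $\wtX^{j-}_\iln$ it proves Caratheodory convergence of the pointed domains to the Lavaurs-limit domains $(\wtA^{j-}_{\ilnz,\delta},\wtX^{j-}_\ilnz)$ and uniform convergence of $\Upsilon^{j-}_{\lambda_n}$ and hence of $\wtal^{j-}_{\lambda_n}$, concluding that these corner maps again stay in a compact family of quasisymmetric maps. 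Without that step (or an equivalent quantitative substitute), your proof is incomplete at the one point where uniformity is actually in doubt.
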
 
\begin{proof}
Note at first that the top boundary lines meet the vertical sides at nearly right angles and 
that all the other real-analytic pieces of the boundary meet at right angles. 
Secondly the coordinates $\varphi^j_\iln$ depends holomorphically on $\iln$. 
Let $\wtA^{j+}_{\ilnz}$ denote the upper connected component of 
$\Omega_\ilnz^{'j+}\setminus\wtR^j_\ilnz$. 
And let $\wtz_{1,\ilnz}\in\wtR^j_\ilnz$, $\wtY_{1,\ilnz}^j$ and $\wtal_{1,\ilnz}^{j+}$ 
be defined similarly to $\wtz_{1,\iln}\in\wtR^j_\iln$,
$\wtY_{1,\iln}^j$ and $\wtal_{1,\iln}^{j+}$ above.
Then the pointed domains $(\wtA^j_\iln, \wtX_\iln^{j+})$ 
converge Caratheodory to the pointed domains 
$(\wtA^{j+}_\ilnz, \wtX_\ilnz^{j+})$
and the arcs $\Upsilon^{j+}_\iln$, $\wtR^j_\iln$, 
converge uniformly in parameter 
to $\Upsilon^{j+}_\ilnz$, $\wtR^j_\ilnz$. 
Hence also $\wtal^{j+}_\iln$ 
converge uniformly in parameter to $\wtal^{j+}_\ilnz$. 
This shows that the boundary maps from the corner of $\partial\wtA^{down}\la$ around $\wtal\la^{j+}$ 
to the corner of $\partial\wtA^{down}\nn$ around $\wtal\nn^{j+}$
remains in a compact family of quasi-symmetric maps. \\

On the other hand, recall that, by the second item in Lemma \ref{estimates}, for every $k$, 
$$
\frac{m_1}{2} \leq \Re(\wtz^j_{2,\iln} - \wtx^{j-}_\iln)
\leq \frac{3}{2} m_1.
$$
Change normalization for $\varphi^j_\lambda$ so that $\tilde x^{j-}_\lambda=\varphi^j_\lambda(x^{j-}_\lambda)=0$, therefore 
$$
\frac{m_1}{2} \leq \Re(\wtz^j_{2,\iln})
\leq \frac{3}{2} m_1.
$$
Hence, there exists a $\tilde t>0$ such that  $\wtz^j_{2,\iln} \in R^j_k(\tilde t,\infty)$ for all $k$.
By the parabolic implosion theory (see the discussion at the beginning of this section), when $k_n \rightarrow \infty$, $\rho_n \rightarrow 1$, and $k_n - \frac{2\pi i }{\rho_n} \rightarrow \delta$, then $f_{\lambda_n}^{k_n} \rightarrow L_\delta$, and 
 the ray $R^j_{\lambda_n}(\frac{t}{2^{qk_n}})$ converges uniformly to $R^{j,1}_{\delta}(t)$ on compact subsets of $(\tilde t,\infty)$. In particular, there exists $z_\delta = \lim_{n \rightarrow \infty} \tilde z_{2,\lambda_n}$, with  $z_\delta \in R^{j,1}_{\delta}$.
Let $\wtA^{j-}_{\ilnz,\delta}$ denote the upper connected component of 
$\Omega_\ilnz^{'j-}\setminus\wtR^{j,1}_{\ilnz,\delta}$.
Define $\wtX_{\lambda_0}^{j-}:=\wty^j_{\lambda_0}+im_1+m_1$.
Then, also in this case, the pointed domain 
$(\wtA^j_{\lambda_n}, \wtX_{\lambda_n}^{j-})$ 
converge Caratheodory to the pointed domains 
$(\wtA^{j-}_{\lambda_0,\delta}, \wtX_{\lambda_0}^{j-})$.  
As also $\Upsilon^{j-}_{\lambda_n}$, 
converges uniformly in parameter 
to $\Upsilon^{j-}_{\lambda_0}$, we obtain that 
$\wtal^{j-}_{\lambda_n}$ 
converge uniformly in parameter to $\wtal^{j-}_{\lambda_0}$. Repeating the argument for $\nu$, we see that
the boundary maps from the corner of $\partial\wtA^{down}\la$ around $\wtal\la^{j-}$ 
to the corner of $\partial\wtA^{down}\nn$ around $\wtal\nn^{j-}$
remains in a compact family of quasi-symmetric maps.

Finally the domains $\wtA^{down}\la$ and $\wtA^{down}\nn$ are nearly square 
and the other parts of the boundary map are easily seen to remain in 
a compact family of quasi-symmetric maps. 
\end{proof}
 
 Define $\hat A^j\la:= \overline{\psi\la^j(\wtA^{j,down}\la \cup \wtA^{j,up}\la)}$, and similarly 
$\hat A^j\nn:= \overline{\psi\nn^j(\wtA^{j,down}\nn \cup \wtA^{j,up}\nn)}$.
Proposition \ref{buffext} together with the discussion before it gives directly the following:
\begin{cor}\label{1}
Let $\lambda\in\Mpqr(\whR)$ and $\nu = \xi(\lambda)\in\Mppqr(\whR)$.
There exists $K>1$ and a family of $K$-quasiconformal homeomorphisms 
$\widetilde \Phi^j\la:\hat A^j\la \rightarrow \hat A^j\nn$, which preserves rays potentials on the part of $\partial A^j\la$ given by external ray.
\end{cor}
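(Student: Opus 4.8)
The plan is to glue together the two quasiconformal boundary extensions already constructed and then conjugate by the integrating coordinates $\psi^j_\iln$. Recall that the discussion immediately preceding Proposition~\ref{buffext} furnishes, for each $0<j<q$ and each $\lambda\in\Mpqr(\whR)$ with $\nu=\xi(\lambda)$, a quasiconformal homeomorphism $\wtA^{j,up}\la\to\wtA^{j,up}\nn$ whose complex dilatation even tends to $0$, while Proposition~\ref{buffext} furnishes a quasiconformal homeomorphism $\wtA^{j,down}\la\to\wtA^{j,down}\nn$ with dilatation bounded uniformly in $\lambda$ and $j$. The first step is to note that, by construction, these two maps restrict to the \emph{same} prescribed affine map on the shared boundary segment joining $\wtY^{j-}_{2,\lambda}$ and $\wtY^{j+}_{2,\lambda}$ (which is the bottom line of $\wtA^{j,up}\la$ and the top line of $\wtA^{j,down}\la$), and likewise on the $\nu$ side. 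Since that segment is a straight, hence real-analytic, arc, it is removable for quasiconformal maps; consequently the two maps paste together into a $K_0$-quasiconformal homeomorphism $\widehat\Psi\la$ from $\overline{\wtA^{j,down}\la\cup\wtA^{j,up}\la}$ onto $\overline{\wtA^{j,down}\nn\cup\wtA^{j,up}\nn}$, with $K_0$ uniform in $\lambda$ and $j$ (this uniformity is exactly what Proposition~\ref{buffext} and the discussion preceding it provide, together with $\nu=\xi(\lambda)$).

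The second step is to transport $\widehat\Psi\la$ to $\hat A^j\la$ by the rectifying coordinates. Recall $\wtA^j_\iln\subset\Omega^{'j}_\iln(r_1)$ and that $\psi^j_\iln$ is univalent, hence conformal, on $\Omega^{'j}_\iln$; thus $\psi^j\la$ and $\psi^j\nn$ restrict to conformal maps on neighbourhoods of $\wtA^{j,down}\la\cup\wtA^{j,up}\la$ and $\wtA^{j,down}\nn\cup\wtA^{j,up}\nn$ respectively. I would then define
$$\widetilde\Phi^j\la\;:=\;\psi^j\nn\circ\widehat\Psi\la\circ(\psi^j\la)^{-1}$$
on $\psi^j\la(\wtA^{j,down}\la\cup\wtA^{j,up}\la)$ and extend it to the closure $\hat A^j\la$ by continuity (legitimate because every boundary arc involved is piecewise real-analytic). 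Since pre- and post-composition with conformal maps does not affect the Beltrami coefficient a.e., $\widetilde\Phi^j\la$ is $K_0$-quasiconformal; replacing $K_0$ by $K:=\max\{K_0,2\}$ gives the asserted uniform constant $K>1$ together with the family $\{\widetilde\Phi^j\la\}$.

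Finally, for the potential statement: the lower boundary arc of $\wtA^j_\iln$ is the lifted ray $\wtR^j_\iln$, which is by construction a connected component of $(\psi^j_\iln)^{-1}(R^j_\iln)$, and on which the glued boundary map $\widehat\Psi\la$ was \emph{defined} so as to preserve potential; since $\psi^j_\iln$ conjugates the near-translation $F^j_\iln$ to $f_\iln$ and hence carries the potential parametrisation of $\wtR^j_\iln$ to that of $R^j_\iln$, it follows that $\widetilde\Phi^j\la$ preserves potential on the part of $\partial\hat A^j\la$ lying on the external ray $R^j\la$.

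I do not expect a genuine obstacle here: all the analytic substance sits in Proposition~\ref{buffext} and the paragraph before it. The only points meriting care are (i) verifying that the two extensions actually coincide on the shared analytic segment, so that pasting across it is permitted, and (ii) confirming that the conformal change of coordinates $\psi^j_\iln$ leaves both the dilatation and its uniformity in $\lambda$ untouched.
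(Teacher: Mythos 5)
Your proposal is correct and follows exactly the route the paper intends: the paper's proof of this corollary is literally the one-line remark that Proposition~\ref{buffext} together with the preceding discussion gives the result, and your write-up simply makes explicit the gluing of the two quasiconformal extensions along the shared affine segment (removable since it is an analytic arc) and the conjugation by the conformal rectifying coordinates $\psi^j_\iln$, which are univalent on $\Omega^{'j}_\iln(r_1)\supset\wtA^j_\iln$ and leave the dilatation unchanged. The only detail glossed over on both sides is the continuous extension across the slit $\psi_\iln(\wtl^j_\iln)$ and the puncture at $q^j_\iln$, which is handled by the translation-compatibility of the boundary maps on the two vertical sides and removability of a point for quasiconformal homeomorphisms.
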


\subsection{Pre-exterior uniformly qc equivalences}\label{Extension}
In this section we construct a family of pre-exterior uniformly quasiconformal equivalences $\varphi\la$ between corresponding polynomial-like maps $\{f\la\}_{\lambda\in\Mpqr(\whR)}$ and $\{g\nn\}_{\nu = \xi(\lambda)\in\Mppqr(\whR)}$.

Fix a potential $p$ big, let $\D^p\la$ the topological disc bounded by the equipotential $E^p\la$ of potential $p$, and note
that $R\la \cup \{\alpha\la\}\cup R'\la$ divides $\D^p\la$ in two connected components: call $O\la$ the one not containing the critical value (equivalently not containing the filled Julia set $K\la$ of the polynomial-like restriction $f\la$ of $P\la$). Define similarly
 the topological disc $\D^p\nn$ bounded by the equipotential $E^p\nn$ of potential $p$, and the set $O\nn$.
 We start proving the following:

\begin{lemma}
There exists a family of uniformly quasiconformal maps $F\la: O\la \rightarrow O\nn$.
\end{lemma}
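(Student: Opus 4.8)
The plan is to build $F\la$ by patching together the pieces constructed in the previous two sections with pieces coming from Böttcher coordinates, organized according to the dynamics of $P\la$. Outside the $q-1$ little filled Julia sets it contains, $O\la$ lies in the basin of infinity $A_{P\la}(\infty)$, and in the Böttcher coordinate of $P\la$ the set $O\la\cap A_{P\la}(\infty)$ is the annular sector $\{1<|w|<e^p,\ \arg w\in J\la\}$, where $J\la$ is the long arc between the arguments of $R\la$ and $R'\la$; being combinatorial, $J\la$ is independent of $\lambda\in\Mpq$ and $J\nn$ is independent of $\nu\in\Mppq$. Fix a potential $p_1<p$ and decompose $O\la$ into: the outer sector $O\la^{\mathrm{out}}$ consisting of the part of $O\la$ between the equipotentials of potentials $p_1$ and $p$; a neighbourhood of $\alpha\la$ inside the linearizer domain $D\la$ on which Corollary~\ref{gen} provides uniformly quasiconformal interpolations in each of the $q-1$ sectors between consecutive rays $R\la^j,R\la^{j+1}$; the domains $\hat A^j\la$ of Corollary~\ref{1} around the $q-1$ cycle points $q\la^j$ lying in $O\la$; finitely many ``tube'' pieces in $A_{P\la}(\infty)$ along the rays $R\la^j$ joining the $\hat A^j\la$ to $O\la^{\mathrm{out}}$; and the remaining inner parts of the little filled Julia sets. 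The analogous decomposition of $O\nn$, $\nu=\xi(\lambda)$, is arranged so that the pieces correspond, with $R\la^j$ matching $R\nn^j$ and $q\la^j$ matching $q\nn^j$.

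On $O\la^{\mathrm{out}}$ I take the $\lambda$-independent map given in Böttcher coordinates by $re^{i\theta}\mapsto re^{i\sigma(\theta)}$, where $\sigma:J\la\to J\nn$ is the order-preserving piecewise-linear homeomorphism carrying the argument of $R\la^j$ to that of $R\nn^j$; this preserves potentials, sends $R\la^j$ to $R\nn^j$, and has $\lambda$-independent dilatation. Since $f\la=P\la^q$ fixes each ray $R\la^j$ and multiplies potential by $2^q$, this map propagates consistently along the tubes via the conjugacy $F\la\circ f\la=g\nn\circ F\la$, remaining uniformly quasiconformal. On the $\alpha$-neighbourhood I use the interpolations of Corollary~\ref{gen}, and on each $\hat A^j\la$ the map $\widetilde\Phi^j\la$ of Corollary~\ref{1}; both families are uniformly quasiconformal. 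It then remains to reconcile the pieces on their overlaps. The overlaps of the $\alpha$-neighbourhood, respectively of $\hat A^j\la$, with neighbouring Böttcher or tube pieces lie in $A_{P\la}(\infty)$, where every map in sight preserves potential and sends $R\la^j$ to $R\nn^j$; hence the transition homeomorphisms there — using the compactness arguments of Section~\ref{between rays} and Proposition~\ref{buffext} — stay in compact families of quasisymmetric maps, and interpolating across collars of uniform modulus yields uniformly quasiconformal matchings. Gluing finitely many pieces gives a uniformly quasiconformal homeomorphism from $O\la$ minus the little filled Julia sets onto $O\nn$ minus the little filled Julia sets, conjugating $f\la$ to $g\nn$ near each little Julia set boundary.

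Finally I extend over the little filled Julia sets contained in $O\la$. Near each of them $f\la$ and $g\nn$ restrict to quadratic-like maps with the same internal class, namely $\chi_{p/q}(\lambda)$, and the map built above is a pre-exterior quasiconformal equivalence between these restrictions; iterating lifting together with Rickman's lemma — exactly as in the strategy of proof, i.e.\ Corollary~\ref{extension_of_qc-pre-equivalence} applied to each little Julia set — produces a hybrid equivalence extending $F\la$ over that little filled Julia set, conformal on it and with dilatation bounded by that of the exterior part. This yields the required uniformly quasiconformal family $F\la:O\la\to O\nn$. I expect the reconciliation step to be the main obstacle: one must verify that the transition maps between the Buff-coordinate pieces $\hat A^j\la$ of Corollary~\ref{1} and the Böttcher/tube pieces — between the coordinates adapted to the nearly parabolic $q$-cycle and those adapted to the basin at infinity — really do form a compact family of quasisymmetric maps uniformly as $\lambda\to\lambda_0$; once the pieces fit, the rest of the bookkeeping is routine.
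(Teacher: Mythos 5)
Your overall strategy is the paper's: assemble $F\la$ from the interpolations of Theorem~\ref{intrays}/Corollary~\ref{gen} near $\alpha\la$, the maps $\widetilde\Phi^j\la$ of Corollary~\ref{1} near the points of the $q$-cycle, and explicit maps on the remainder, then control the gluing. There is, however, a genuine gap: your decomposition does not cover $O\la$. The set $O\la\cap K_{P\la}$ is strictly larger than the union of the $q-1$ little filled Julia sets it contains --- it also contains all of their decorations, i.e.\ the limbs of $K_{P\la}$ attached to the little Julia sets at iterated preimages of $\alpha\la$ (for instance the limb of the co-$\beta$ point of $P\la$ when its external angle lies in the arc $J\la$). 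Your outer piece lives in B\"ottcher coordinates and therefore exists only on $A_{P\la}(\infty)$ above potential $p_1>0$; your hybrid-equivalence step (via Corollary~\ref{extension_of_qc-pre-equivalence}) covers only the little Julia sets together with compactly contained neighbourhoods of them; and your tubes and $\alpha$-neighbourhood are localized near the rays and near $\alpha\la$. Nothing in the proposal defines $F\la$ on the large decorations, nor on the low-potential part of the basin lying between decorations away from all the distinguished pieces. This is precisely the difficulty the paper's construction is organized to avoid: the interpolations $\widetilde\Phi\la$ and $\widetilde\Phi^{i,j}\la$ are defined on entire sectors between consecutive rays, in a uniformizing coordinate of the sector, so they sweep over whatever the sector contains --- basin, little Julia set and decorations alike --- without ever separating $K_{P\la}$ out; these sectors are then spread outward by the dynamics ($L\la^n=f\la^n(L\la)$, etc.) until their outer boundaries come within a bounded distance of the equipotential $E^p\la$, leaving only a collar-like region $\hat O\la$ with boundary angles uniformly bounded away from $0$ and $\pi$ on which to interpolate.

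A secondary problem: your map $re^{i\theta}\mapsto re^{i\sigma(\theta)}$ with $\sigma$ piecewise linear does not commute with $w\mapsto w^2$ unless $\sigma(2\theta)=2\sigma(\theta)$, so it cannot be ``propagated consistently'' via $F\la\circ f\la=g\nn\circ F\la$ --- the propagated map will disagree with the original on overlaps --- and since the rotation numbers $p/q$ and $p'\!/q$ are distinct, no rigid rotation matches all $q$ rays either. So the equivariance you invoke on the tubes needs to be dropped or replaced by a further interpolation. The remaining ingredients (Corollaries~\ref{gen} and~\ref{1}, the compactness of the transition maps, the hybrid extension across the little Julia sets) are used correctly and, on the little Julia sets themselves, more explicitly than in the paper; but as it stands the construction does not produce a map on all of $O\la$.
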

\begin{proof}
In Section \ref{cycle} (Corollary \ref{1}), we constructed families 
of uniformly quasiconformal maps 
$\widetilde \Phi^j\la:\hat A^{j}\la \rightarrow \hat A^{j}\nn$,   $j\in [1,q-1]$, 
between neighbourhoods $\hat A^j\la$ and $\hat A\nn^j$ of the $j$-th points of the q cycle (where $q_0$ is the $\alpha$-fixed point of the polynomial-like map).
In section \ref{between rays} (see Theorem \ref{intrays}) we constructed a family $\widetilde\Phi\la: \check W\la \rightarrow \check W\nn$ of uniformly quasiconformal interpolations between the external rays $R\la$ and $R'\la$ (considered in anticlockwise order, with $\check W\la=\eta\la(\hat W\la)$ and $\check W\nn=\eta\nn(\hat W\nn)$), in a domain of univalency of the linearizers maps. We can apply this construction to obtain families $\widetilde \Phi^{i,j}\la: \check W^{i,j}\la \rightarrow \check W^{i,j}\nn$ of uniformly quasiconformal interpolations between consecutive rays $R^i\la$ and $R^j\la$, in a domain of univalency of the linearizers maps (see Corollary \ref{gen}).
So, to prove the statement, we just need 
a uniform quasiconformal interpolation on 
$\hat O\la:=  O\la \setminus (\bigcup_{j \in [0,q-2]} \hat A^j\la\cup \check W\la \bigcup_{i,j \in [0,q-2],i\neq j} \check W^{i,j}\la)$.

Call $L\la$ the boundary of $\check W\la$ different from $R\la$ and $R'\la$, $L^{i,j}\la$ the boundary of $\check W^{i,j}\la$ different from the consecutive rays, and define similarly $L\nn$ and $L^{i,j}\nn$.
Using dynamics we can extend 
$\widetilde \Phi\la$ and $\widetilde \Phi^{i,j}\la$ to bigger domains, having boundary $L\la^n=f\la^n(L\la)$ and $L\la^{n,i,j}=f\la^n(L^{i,j}\la)$ respectively, with length bigger than some chosen constant. 
So choose $r>0$, and let $n=n(\lambda)$ be such that the length of $L\la^n$, $L\nn^n$, $L\la^{n,i,j}$ and $L\nn^{n,i,j}$ is at least $r$, the distance between the equipotential $E^p\la$ of potential $p$ and $L\la^n$ is at least $r$, as well as the distances between $E^p\la$ and the  $L\la^{n,i,j}$, $E^p\la$ and the $\hat  A^{ j}\la,\,\,j\in (1,q)$, between the equipotential $E^p\nn$ of potential $p$ and $L\nn^n$,  $E^p\nn$ and the $L\nn^{n,i,j}$, and between $E^p\nn$ and the $\hat  A^{j}\nn,\,\,j\in (1,q)$.
As $\widetilde\Phi\la: \check W\la \rightarrow \check W\nn$ is uniformly quasiconformal, and $f\la$ is holomorphic, we can lift $\widetilde\Phi\la$ to a uniformly quasiconformal map $\widetilde\Phi\la: f\la^n(\check W\la) \rightarrow g\nn^n(\check W\nn)$ and $\widetilde \Phi^{i,j}\la$ to $\widetilde \Phi^{i,j}\la: f\la^n(\check W^{i,j}\la) \rightarrow g\nn^n(\check W^{i,j}\nn)$.
As all the angles are uniformly bounded away from $0$ and $\pi$, the result follows.
\end{proof}
The previous extension gives this obvious
\begin{cor}\label{g}
There exists a family of uniformly quasiconformal maps $G\la: \check O\la:= O\la \setminus \check W\la\rightarrow \check O\nn:=O\nn\setminus \check W\nn$.
\end{cor}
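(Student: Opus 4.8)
The plan is to obtain $G\la$ simply as the restriction of the map $F\la$ furnished by the previous Lemma. Recall that $F\la:O\la\to O\nn$ was assembled by gluing together the interpolation $\widetilde\Phi\la:\check W\la\to\check W\nn$ of Theorem~\ref{intrays} on the region $\check W\la$, the interpolations $\widetilde\Phi^j\la:\hat A^j\la\to\hat A^j\nn$ near the points of the $q$-cycle, the interpolations $\widetilde\Phi^{i,j}\la$ on the regions $\check W^{i,j}\la$ between consecutive rays, and a uniformly quasiconformal interpolation on the leftover region $\hat O\la$. In particular, on $\check W\la$ the map $F\la$ coincides with $\widetilde\Phi\la$, so that $F\la(\check W\la)=\check W\nn$; hence $F\la$ maps the open set $\check O\la=O\la\setminus\check W\la$ homeomorphically onto $O\nn\setminus\check W\nn=\check O\nn$.

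It then suffices to set $G\la:=F_{\lambda|\check O\la}$. Since the restriction of a $K$-quasiconformal homeomorphism to an open subset of its domain is again $K$-quasiconformal, and since by the previous Lemma the dilatation of $F\la$ is bounded independently of $\lambda$, the family $\{G\la\}$ is uniformly quasiconformal, which is the desired conclusion. There is essentially no obstacle here: the only point meriting an explicit mention is the identification $F\la(\check W\la)=\check W\nn$, which is immediate from the way $F\la$ was built and which guarantees that the target of the restricted map is precisely $\check O\nn$ rather than some unspecified subdomain of $O\nn$.

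The reason one isolates this statement is that in Section~\ref{Extension} the full pre-exterior equivalence $\varphi\la$ will be produced by coupling the canonical interpolation $\widetilde\Phi\la$ on the strip-like region $\check W\la$ adjacent to the separating rays $R\la, R'\la$ with the map $G\la$ on the complement $\check O\la$; splitting $O\la$ as $\check W\la\cup\check O\la$ keeps the ray-adapted piece, which is compatible with the B\"ottcher coordinates, available for the subsequent lifting arguments.
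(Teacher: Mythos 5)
Your argument is correct and is exactly what the paper intends: the authors state the corollary as an immediate ("obvious") consequence of the preceding Lemma, i.e.\ one restricts $F\la$ to $O\la\setminus\check W\la$, using that $F\la$ agrees with $\widetilde\Phi\la$ on $\check W\la$ so the image is precisely $\check O\nn$, and that restricting a $K$-quasiconformal homeomorphism preserves the bound $K$. Nothing further is needed.
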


For $\lambda \in \Mpqr$, let $x\la \in R\la \cap \overline{\check W\la}$ and $x'\la \in R'\la \cap \overline{\check W\la}$ be such that their images $\hat x\la$ and $\hat x'\la$ in $\hat W\la$ have the same real part. Call $\ell\la \in \overline{\hat W\la}$ the vertical line joining them, and $\check\ell\la=\eta\la(\ell\la)$.
Call $E_|^R$ the shortest curve in $E^p\la$ between $R\la$ and the preimage  of $R\la'$  landing at the preimage  of $\alpha\la$.
Define $\ell\la^R$  to be a smooth convex curve defined in Log-Bottcher coordinates joining the image in Log-Bottcher coordinates of $x\la$  with the image in Log-Bottcher coordinates of a point in the interior of $E_|$, and such that $\ell\la^R \cap R\la=x\la$. 
Call $E_|^L$ the shortest curve in $E^p\la$ between $R\la'$ and the preimage  of $R\la$  landing at the preimage  of $\alpha\la$.
Define $\ell\la^L$  to be a smooth convex curve defined in Log-Bottcher coordinates joining the image in Log-Bottcher coordinates of $x\la'$  with the image in Log-Bottcher coordinates of a point in the interior of $E_|^L$, and such that $\ell\la^L \cap R\la'=x\la'$.

Call $E^*\la$ the shortest curve in $E^p\la$ bounded by $\ell\la^R$ and $\ell\la^L$. Define $\partial U\la$ as the piecewise smooth jordan curve defined by $\check\ell\la \cup \ell\la^R \cup E^*\la \cup \ell\la^L$, and $U\la$ as the bounded Jordan domain bounded by $\partial U\la$. Define $U\la':= P\la^{-q}$, then $(U\la,U'\la, f\la=P^q|_{\lambda|U'\la}) $ is the polynomial-like restriction we consider. 
Repeat the construction for $\nu=\xi(\lambda)$. In particular, note that,
 if $R_1'$ and $R_1$ are the images in B\"ottcher coordinates of $R\la'$ and $R\la$ respectively, and
$R'_2$ and $R_2$ are the images in B\"ottcher coordinates of $R\nn'$ and $R\nn$ respectively, then there exists and angle $\widehat \theta$ such that the rotation $R_{\widehat\theta}$ of angle $\widehat \theta$ maps $R_1$ to $R_2$ and $R_1'$ to $R_2'$.
Set $\check\ell\nn= \widetilde\Phi\la(\check\ell\la)$, $\ell\nn^R= \Log\circ B\nn^{-1}\circ R_{\hat\theta}\circ B\la\circ \exp(\ell\la^R)$, $\ell\nn^L= \Log\circ B\nn^{-1}\circ R_{\hat\theta}\circ B\la\circ \exp(\ell\la^L)$, and call $E^*\nn$ the shortest curve in the equipotential $E^p\nn$ of potential $p$ bounded by $\ell\nn^R$ and $\ell\nn^L$.
 Call $\partial U\nn$ the piecewise smooth Jordan curve defined by $\check\ell\nn \cup \ell\nn^R \cup E^p\nn \cup \ell\nn^L $, and $U\nn$ the bounded Jordan domain bounded by $\partial U\nn$. Define $U\nn':= P\nn^{-q}$, then $(U\nn,U'\nn, g\nn=P^q_{|\nu|U'\nn}) $ is the polynomial-like restriction we consider. 

 Define $W\la:= \widehat \C \setminus U'\la$ and $W\nn:= \widehat \C \setminus U'\nn$.
 \begin{teor}\label{Uniformly_qc_conjugacies}
There exists a family of pre-exterior uniformly quasiconformal equivalences $\varphi\la: W\la \rightarrow W\nn$.
 \end{teor}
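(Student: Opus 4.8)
The plan is to assemble the uniformly quasiconformal maps from the previous subsections into a single uniformly quasiconformal homeomorphism $\varphi\la\colon W\la\to W\nn$ fixing $\infty$, and to arrange that its boundary extension conjugates $f\la$ to $g\nn$ along $\gamma_1=\partial U'\la$ --- which is exactly what a pre-exterior quasiconformal equivalence requires.

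First I build a uniformly q.c.\ map $\Psi\la$ on $\widehat\C\setminus U\la$. On (the part outside $U\la$ of) $O\la$ I set $\Psi\la:=F\la$, the uniformly q.c.\ map $O\la\to O\nn$ of the Lemma above and Corollary~\ref{g}, which is itself the gluing of $\widetilde\Phi\la$ on $\check W\la$ (Theorem~\ref{intrays}), the interpolations $\widetilde\Phi^{i,j}\la$ between consecutive rays (Corollary~\ref{gen}), the neighbourhood maps $\widetilde\Phi^j\la$ of the $q$-cycle points (Corollary~\ref{1}), and a final interpolation on $\hat O\la$. On the rest of $\widehat\C\setminus U\la$ --- the outer basin $\widehat\C\setminus\D^p\la$ together with the thin ``slivers'' between $\partial U\la$ and the rays $R\la,R'\la$, all of which lie in the basin of $\infty$ of $P\la$ --- I set $\Psi\la:=B\nn^{-1}\circ R_{\hat\theta}\circ B\la$, which is conformal and fixes $\infty$. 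These two pieces agree on overlaps, because all the interpolations of Sections~\ref{between rays} and~\ref{cycle} are normalised so that on every ray $R^i\la$ and on $E^p\la$ they coincide with $B\nn^{-1}\circ R_{\hat\theta}\circ B\la$, and $\ell\nn^R,E^*\nn,\ell\nn^L,\check\ell\nn$ are by construction precisely the images of $\ell\la^R,E^*\la,\ell\la^L,\check\ell\la$ under the relevant maps; hence $\Psi\la\colon\widehat\C\setminus U\la\to\widehat\C\setminus U\nn$ is a homeomorphism with $\Psi\la(\partial U\la)=\partial U\nn$, and it is uniformly q.c.\ since it is a gluing of finitely many (a number depending only on $q$) uniformly q.c.\ maps along uniform quasiarcs meeting at angles uniformly bounded away from $0$ and $\pi$.

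It remains to extend $\Psi\la$ across the fundamental annulus $A\la:=U\la\setminus\overline{U'\la}$ of the polynomial-like map. Since the critical value of $f\la$ lies in $K\la\subset U'\la$, neither $\partial U\la$ nor $\partial U'\la$ meets it, so $f\la\colon\partial U'\la\to\partial U\la$ is an honest degree-$2$ covering, and likewise $g\nn\colon\partial U'\nn\to\partial U\nn$; thus $\Psi\la|_{\partial U\la}$ lifts to a homeomorphism $\partial U'\la\to\partial U'\nn$, and I take $\varphi\la|_{\partial U'\la}$ to be the lift normalised consistently with the marking, so that $\varphi\la\circ f\la=g\nn\circ\varphi\la$ on $\partial U'\la$. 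Both $\varphi\la|_{\partial U\la}=\Psi\la|_{\partial U\la}$ and $\varphi\la|_{\partial U'\la}$ are uniformly quasisymmetric --- the former as the trace of a uniformly q.c.\ map, the latter as its lift through holomorphic degree-$2$ covers --- and $A\la,A\nn$ are topological annuli whose moduli stay in a fixed compact subinterval of $(0,\infty)$, because $U\la$ is a bounded modification of an equipotential disk and $U'\la$ its $f\la$-preimage, so the external class of $f\la$ is uniformly q.c.-conjugate to $z\mapsto z^{2}$. Hence the boundary data extends to a uniformly q.c.\ homeomorphism $A\la\to A\nn$; gluing it to $\Psi\la$ along $\partial U\la$ yields $\varphi\la\colon W\la\to W\nn$ --- a homeomorphism fixing $\infty$, uniformly q.c., and conjugating $f\la$ to $g\nn$ on $\gamma_1=\partial U'\la$, i.e.\ a pre-exterior uniformly q.c.\ equivalence.

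The analytic core is already in place --- every uniform dilatation bound comes from Sections~\ref{between rays} and~\ref{cycle} --- so the real work here is organisational: one must check that the three building blocks (the outer B\"ottcher--rotation map, $F\la$ on $O\la$, the annulus extension) share boundary values along every common arc, which forces the consistent use throughout of the single normalisation $R_{\hat\theta}\circ B\la$ on all rays, of the equipotential $E^p\la$, of the marked points $x\la,x'\la$, and of the curve $\check\ell\la$; that every gluing curve is a uniform quasiarc with angles bounded away from $0$ and $\pi$; and that $\mathrm{mod}(A\la),\mathrm{mod}(A\nn)$ remain uniformly bounded above and below as $\lambda$ ranges over $\Mpqr(\whR)$, including in the parabolic limit $\lambda\to\lambda_0$. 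These are the points that demand care.
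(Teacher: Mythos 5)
Your first step --- gluing $F\la$ on $O\la$ with the B\"ottcher conjugation $B\nn^{-1}\circ R_{\hat\theta}\circ B\la$ on the rest of $\Chat\setminus U\la$ --- matches the paper's construction, and you correctly identify that the conjugacy relation is only required on $\partial U'\la$. The gap is in how you cross the fundamental annulus $A\la=U\la\setminus\overline{U'\la}$. You extend abstractly from quasisymmetric boundary data, and the uniformity of that extension rests on your claim that $\operatorname{mod}(A\la)$ and $\operatorname{mod}(A\nn)$ stay in a fixed compact subinterval of $(0,\infty)$, which you justify by asserting that the external class of $f\la$ is uniformly q.c.-conjugate to $z\mapsto z^2$. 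Both claims are false, and the second would trivialize the paper: if the moduli of fundamental annuli (equivalently, the external classes) of the satellite renormalizations were uniformly controlled over $\Mpqr$, the quantitative straightening theorem together with Lyubich's q.c.-straightening results would make $\chi_{p/q}$ quasiconformal on all of $\Mpq$, i.e.\ $\Mpq$ would be q.c.-homeomorphic to $\M$ itself, contradicting the main theorem of \cite{LP}. In fact $\operatorname{mod}(U\la\setminus\overline{U'\la})\to 0$ as $\lambda\to\lambda_0$ in $\Mpqr$ for \emph{any} choice of quadratic-like restriction of $P\la^q$ about $K\la$; this loss of a priori bounds at the root of a satellite copy is precisely the difficulty the whole paper is built to circumvent. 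Consequently no interpolation whose dilatation is controlled by the modulus of $A\la$ can be uniform over $\Mpqr$.

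The paper fills the annulus in a way that is insensitive to its degenerating modulus: it takes $\check O'\la$, the preimage component of $\check O\la$ under $P\la^{-q}$ lying in $U\la\setminus U'\la$, and defines $\varphi\la$ there as the lift $\check G\la$ of $G\la$ through the holomorphic maps $P\la^q$ and $P\nn^q$ --- a holomorphic lift has exactly the dilatation of $G\la$, no matter how thin the annulus becomes --- and on the remaining piece $\hat O\la=W\la\setminus(O\la\cup\check O'\la)$, which lies in the basin of infinity, it uses the conformal B\"ottcher conjugation. This dynamical transport also delivers the conjugacy $\varphi\la\circ f\la=g\nn\circ\varphi\la$ on $\partial U'\la$ for free, whereas in your scheme it is imposed by hand on the boundary and then propagated through a non-dynamical interpolation. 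To repair your argument you must replace the soft annulus extension by such a lift under the dynamics.
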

\begin{proof}
Let $S\la$ be the fundamental domain in $\check W\la$ between $\check \ell\la$ and its preimage in $\check W\la$. Define $S\nn=\widetilde\Phi\la(S\la)$, then obviously by Theorem \ref{intrays} there exists a family of uniformly quasiconformal maps 
$\widetilde\Phi_{\lambda|}:S\la \rightarrow S\nn$.
By Corollary \ref{g}, we gave a family of uniformly quasiconformal maps $G\la: O\la \setminus \check{ W\la}\rightarrow O\nn\setminus \check W\nn$ agreeing with the family $\widetilde\Phi_{\lambda|}$ on $\check \ell\la$.
Let $\check O\la'$ to be the preimage of 
$P\la^{-q}(\check O\la)$ in $U\la \setminus U\la'$, and let $\check O\nn'$ to be the preimage of 
$P\nn^{-q}(\check O\nn)$ in $U\nn \setminus U\nn'$. Then clearly we can lift the family of uniformly quasiconformal maps $G\la$ to a family of uniformly quasiconformal maps $\check G\la:\check O'\la  \rightarrow \check O'\nn$.
On $\hat O\la:=W\la \setminus (O\la \cup \check O'\la)$ we define $\varphi_{\lambda|\hat O\la}:= B\nn^{-1}\circ R_{\hat\theta}\circ B\la$. The result follows.
\end{proof}
\begin{remark}
Note that we could restrict the family of pre-exterior uniformly quasiconformal equivalences to a family of uniformly quasiconformal homeomorphisms between the fundamental domains of the corresponding polynomial-like maps
$\varphi_{\lambda|}: U\la \setminus U\la' \rightarrow U\nn \setminus U\nn'$.
\end{remark}

\section{Harvesting in parameter space}\label{L}
In this section we shall trade Theorem~\ref{Uniformly_qc_conjugacies} in for a proof of our main 
theorem that $\Mpq$ and $\Mppq$ are q-c homeomorphic. 
We shall use Lyubich notion and results on quadratic-like maps and quadratic-like germs (see \cite{Ly}) 
and complement it with some further definitions and results. 
We start with a brief presentation of the relevant definitions and results from \cite{Ly}. 
In line with \cite{Ly} we shall henceforth write 
$f:U\to U'$ for a polynomial-like map (note that the prime is on the co-domain instead of the domain). 

A quadratic-like map is a polynomial-like map $f: V \to V'$ of degree $2$, where $V\subset\subset V'\subset\subset\C$ are smoothly bounded Jordan disks 
and $f$ is normalized so that 
$f(z) = c + z^2+ \OO(z^3)$. 
In particular the critical point is $0\in U$ 
with critical value $c = c_f$. 
The space of quadratic like maps is denoted $\QM$. 
Two quadratic-like maps {\mapfromto{f_i}{V_i}{V_i'}}, $i=1,2$ are \emph{restriction equivalent} if they have a common quadratic-like restriction and \emph{equivalent} if they are the ends of a finite chain of neighbourwise restriction equivalent quadratic-like maps. 
In particular $f_1=f_2$ on a neighbourhood of $0$ and have the same filled-in Julia set.
For maps with connected filled-in Julia set equivalence and restriction equivalence is the same. 
Equivalence is obviously an equivalence relation on $\QM$ and the equivalence classes are called \emph{quadratic-like germs}. The space of quadratic-like germs is denoted $\QG$.\\
Following McMullen, Lyubich equips $\QM$ with the Caratheodory convergence structure in which
a sequence $(f_n:V_n\to V')\subset\QM$ converge to $f:V\to V'\in\QM$ if and only if the pointed disks $(V_n,0)$ converge Caratheodory to $(V,0)$ and $f_n\to f$ locally uniformly on $V$. 
He then equips the space $\QG$ with a topology and a complex analytic structure compatible with the Caratheodory convergence structure by 
covering $\QM$ with affine Banach balls that inject into $\QG$: 
for $V\ni 0$ a Jordan disk, denote by $\BB_V$ the Banach space of holomorphic maps $g:V\to\C$, with $g'(0)= 0$ and with a continuous extension to $\overline{V}$, 
equipped with the $\sup$-norm on $\overline{V}$. 
For $f:V\to V'\in\QM$ and $\epsilon=\epsilon_f>0$ sufficiently small,
define the affine Banach ball 
$\BB_V(f,\epsilon):=\{g \in\BB_V\,|\,||f-g||_{\infty}<\epsilon \}$ as the affine ball of center $f$ and radius $\epsilon$. 
Then there exists $\epsilon = \epsilon_f >0$ such that any map $g$ in the \emph{Banach slice}, the affine 
co-dimension $1$ sub-Banach ball, 
$\whBB_V(f, \epsilon) := \{g\in\BB_V(f,\epsilon)| g''(0) = 2\}$ of center $f$ and radius $\epsilon$ has a quadratic-like restriction $g:U\to U'$, where $U\subseteq V$ and $g(z)=c+z^2+\OO(z^3)$. 
The Banach slice $\whBB_V(f, \epsilon)\subset \QM$ 
injects into $\QG$ and the family of such Banach slices $\whBB_V(f, \epsilon_f)$, $f\in\QM$ provides $\QG$ with the desired topology and complex analytic structure.

Any quadratic-like map has an associated internal class and an associated external class. While the internal class is basically given by the little filled Julia set of the quadratic-like map, the external class can be defined in different ways. For Douady and Hubbard, the external class of a quadratic-like map is the conjugacy class under real-analytic conjugacy of an expanding degree $d$ real analytic circle covering map encoding the conformal dynamics of the polynomial-like map outside its filled Julia set $K_f$. It turns out that the external equivalence classes as defined by Douady and Hubbard are too large for giving the space of polynomial-like maps a complex analytical structure. McMullen made the initial restriction of considering only quadratic-like maps with co-domain a relatively compact disk in  complex plane, and equivalence classes to mean affine conjugacy classes. Lyubich defined the external class of a quadratic-like map to be the unique expanding degree $2$ circle map fixing $1$ defined via conjugation by the appropriate Riemann map from $\Chat\setminus K_f$ to $\Chat\setminus\Dbar$ fixing $\infty$ in the case $K_f$ is connected and some appropriate substitute in the disconnected case. 
Then two quadratic-like maps $f_i$ with connected filled-in Julia sets $K_i$ have the same Lyubich  external class if and only if they are externally equivalent in the sense of Douady and Hubbard and any external equivalence is a restriction of the biholomorphic map $\phi = \phi_2^{-1}\circ\phi_1:\Chat\setminus K_1\to\Chat\setminus K_2$ fixing $\infty$. 
In the disconnected case the construction of external map and class is slightly more involved, just as it is in the case with Douady and Hubbard's notion of external class. 
We shall omit it for the time being, but return to it after \corref{from_pre_exterior_to_exterior}. 
The important property however is that the notion of having same external class defines an equivalence relation on $\QM$, which descends to $\QG$. And if two quadratic-like maps are both internally equivalent and Lyubich externally equivalent they define the same germ. 
Lyubich calls the equivalence classes under his  equivalence vertical fibers and denotes them $\ZZ_f$, where $f$ is some representative of the equivalence class or the common external map:
$$
\ZZ_{f_0} := \{ [f]\in \QG : \textrm{$f$ has the same external class as $f_0$} \}.
$$

The connectedness locus $\CC\subset\QG$ is the set of germs of quadratic-like maps $f$ 
with connected filled-in Julia set. 
Lyubich proves that each vertical fiber 
intersects $\CC$ in a compactly contained homeomorphic copy of the Mandelbrot set
\begin{cor}[\cite{Ly}, Cor 4.24]
    The vertical fibers are full unfolded quadratic-like families.
\end{cor}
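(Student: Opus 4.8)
The plan is to verify the three features the statement bundles together: that the vertical fiber $\ZZ_{f_0}$ carries a holomorphic family of quadratic-like maps over a disk, that this family is \emph{full} (its connectedness locus is compactly contained in the parameter disk), and that it is \emph{unfolded} (the critical value, read off in external coordinates, has winding number one about the connectedness locus). The first two items are largely a repackaging of the structural facts recalled just above; the real work is the winding number.

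First I would set up the family. Lyubich's description of $\QG$ through the Banach-slice atlas $\whBB_V(f,\epsilon_f)$ (\cite{Ly}) shows that $\ZZ_{f_0}$ is a one-dimensional complex submanifold of $\QG$ which is conformally a disk; I take this as given. Any germ $[g]\in\ZZ_{f_0}$ lies in some Banach slice, so on a neighbourhood of $[g]$ one can pick a quadratic-like representative $g_\lambda\colon V_\lambda\to V_\lambda'$ depending holomorphically on the slice coordinate $\lambda$; patching over the slice atlas produces a global holomorphic quadratic-like family over $\ZZ_{f_0}$, normalised by $g_\lambda(z)=c_\lambda+z^2+\OO(z^3)$. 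Restricting $\ZZ_{f_0}$ to a round sub-disk of its uniformising coordinate that still contains the connectedness locus, we may take the parameter space to be a genuine disk.

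Next I would check fullness. Set $M_{f_0}:=\ZZ_{f_0}\cap\CC$. It is relatively closed in $\ZZ_{f_0}$, and since $\CC$ is compact in $\QG$ it is compact. That it lies in the interior of the parameter disk follows from the straightening theorem (\cite{DH}), extended by Lyubich (\cite{Ly}) to the disconnected case via his normalisation of external maps: under the uniformising coordinate chosen above, $M_{f_0}$ is carried onto a copy of $\M$, and since $\M\subset\D(0,2)$ one simply takes the parameter disk of radius greater than $2$. This is precisely the statement recalled immediately before the corollary.

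The substantive point is unfoldedness: the winding number of $\lambda\mapsto\Phi_\lambda(c_\lambda)$ about $M_{f_0}$ equals one, where $\Phi_\lambda$ is the external (B\"ottcher-type) coordinate of $g_\lambda$. I would argue by deformation over the space of external classes. That space --- expanding degree-$2$ real-analytic circle maps fixing $1$, modulo the allowed conjugacies --- is connected, and both $\ZZ_h$ and the quadratic-like family over it depend holomorphically on $h$; fullness persists along any path of external classes because $\CC$ stays compact, so the winding number is an integer-valued continuous, hence constant, function of $h$. At the model external class $h_\ast\colon\theta\mapsto 2\theta$ the vertical fiber is a restriction of the genuine quadratic family $Q_c(z)=z^2+c$, for which $c_\lambda=\lambda$ and the winding number is visibly one; hence it is one for every $h$, in particular for $f_0$. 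I expect this deformation step to be the main obstacle, since it needs the vertical fibers to vary holomorphically with the external class compatibly with the B\"ottcher coordinate, and the compactness of $\CC$ to be uniform enough along the path for the winding number to remain well defined. A local alternative would be to compute transversality of the family at a single well-understood parameter --- the centre of $M_{f_0}$, where $g_\lambda$ is hybrid equivalent to $z\mapsto z^2$, or a Misiurewicz point --- and combine it with the already-established homeomorphism $\chi\colon M_{f_0}\to\M$, trading the global continuity argument for the holomorphic-motion machinery used elsewhere in this paper.
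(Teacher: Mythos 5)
First, a point of comparison: the paper does not prove this statement at all --- it is imported verbatim from Lyubich (\cite{Ly}, Cor.~4.24) and used as a black box, so there is no internal proof to measure yours against. Judged on its own terms, your reconstruction has the right skeleton (complex structure on the fiber, fullness, winding number one), but two of the three steps contain genuine gaps.

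The fullness step as written fails. $\CC$ is not compact in $\QG$: by Theorem~\ref{Lyubichconnectedmating} it is homeomorphic to $\HH_0\times\M$, and $\HH_0$ is an infinite-dimensional submanifold of $\QG$, so compactness of $M_{f_0}=\ZZ_{f_0}\cap\CC$ cannot be deduced from compactness of $\CC$. Likewise, the inclusion $\M\subset\D(0,2)$ lives in the straightening coordinate, which is neither holomorphic nor defined off the connectedness locus, so it says nothing about where $M_{f_0}$ sits inside a uniformizing coordinate of the fiber. The ingredient that actually delivers compact containment is the disconnected mating theorem (Theorem~\ref{disconnectedmating} here, Lyubich's Lemma~4.14): the germs of $\ZZ_{f_0}$ with disconnected Julia set are bijectively and holomorphically parametrized by the position $w\in U_0'\setminus K_{f_0}$ of the critical value, with $B_w(c(w))=w$, so the connectedness locus is the complement of this collar and hence compactly contained. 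The same lemma makes unfoldedness immediate and bypasses your deformation argument entirely: since the critical value read in the exterior coordinate is literally $w$, a loop in $U_0'\setminus K_{f_0}$ winding once around $K_{f_0}$ is carried to a loop along which the critical-value map has winding number one. Your proposed deformation over the space of external classes could perhaps be carried out, but each of the points you yourself flag (connectedness of that space, holomorphic dependence of the fiber and of the B\"ottcher-type coordinate on the external class, persistence of fullness along the path) is of the same order of difficulty as the statement being proved; as it stands the winding-number step is a programme rather than a proof. The missing idea, in short, is Theorem~\ref{disconnectedmating}, from which both fullness and unfoldedness drop out in one line.
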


Douady and Hubbard \cite{DH} define 
two polynomial-like maps  $f_i:V_i\to V_i'\in\QM$, 
$i=1,2$ to be hybridly equivalent, if and only if there exists a q.c.-homeomorphism $\phi$ between neighbourhoods of the respective filled Julia sets $K_1$ and $K_2$ which conjugates dynamics and for which the complex dilatation $k_\phi(z) = 0$ a.e.~on $K_1$. 
Two germs in $\QG$ are hybridly equivalent, if they can be represented by hybridly equivalent quadratic-like maps.

The hybrid class $\HH_0\subset\QG$ 
consisting of germs of maps $f$ in $\QM$ with $0$ constant term, i.e.~$f(z) = z^2+\OO(z^3)$, 
is a co-dimension $1$ submanifold of $\QG$. 
Lyubich proves that the hybrid class $\HH_c$ of any $c\in \M$ is a co-dimension $1$ submanifold of $\QG$.

\begin{teor}[Lem~4.3 \& Thm~4.13, \cite{Ly}] \label{Lyubichconnectedmating}
The connectedness locus $\CC$ is homeomorphic to the product $\HH_0\times\M$. The homeomorphism is horizontally analytic and analytic in both variables for $c\in int(\M)$.
\end{teor}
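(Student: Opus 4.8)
The plan is to exhibit the homeomorphism explicitly, using the Straightening Theorem to produce the $\M$-coordinate and a normal-form representative of the external class to produce the $\HH_0$-coordinate. Given a germ $f\in\CC$, its filled-in Julia set is connected, so $f$ is hybridly equivalent to a unique quadratic polynomial $Q_{c(f)}$; I would set $\chi(f):=c(f)\in\M$. Independently, $f$ carries a well-defined external class, i.e.\ an expanding degree-$2$ real-analytic circle map fixing $1$. The point is that $\HH_0$ is a faithful model for the space of such external classes: every germ in $\HH_0$ has the same internal class (that of $z\mapsto z^2$), so by the rigidity step below two germs of $\HH_0$ with equal external class coincide, which gives injectivity of $\HH_0$ into the space of external classes; and the vertical fibre through any $f\in\CC$ meets $\CC$ in a copy of $\M$ (the corollary on full unfolded families quoted above), hence meets $\HH_0$ since $0\in\M$, which gives surjectivity. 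Writing $E_0(f)\in\HH_0$ for the unique germ with the external class of $f$, the candidate map would be $\Xi:\CC\to\HH_0\times\M$, $\Xi(f)=(E_0(f),\chi(f))$.

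The first main step is injectivity of $\Xi$, i.e.\ the \emph{rigidity} statement: if $f_1,f_2\in\CC$ satisfy $\chi(f_1)=\chi(f_2)=c$ and have the same external class, then they are the same germ. Here $f_1$ and $f_2$ are hybridly equivalent to each other through a q.c.\ conjugacy $\phi$ defined near their Julia sets with $\overline{\partial}\phi=0$ a.e.\ on $K_{f_1}$, and, the external maps being equal, the uniformizations of the complements of the Julia sets conjugate $f_1$ and $f_2$ by a holomorphic map $\eta:\Chat\setminus K_{f_1}\to\Chat\setminus K_{f_2}$. I would feed these two pieces of data into the standard pull-back argument (iterated lifting together with the Rickman removability/gluing lemma, as used throughout the Introduction): this upgrades the a priori quasiconformal conjugacy obtained by combining them into a conformal conjugacy $H$ --- conformal off $K_{f_1}$ because it agrees with $\eta$ there, conformal a.e.\ on $K_{f_1}$ by the hybrid property, hence conformal by Weyl's lemma. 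Since $H$ conjugates $f_1$ to $f_2$ it fixes the common critical point $0$, and matching power series in $H\circ f_1=f_2\circ H$ with $f_i(z)=c+z^2+\OO(z^3)$ forces $H=\operatorname{id}$, so $f_1=f_2$ near $0$. The same argument with internal class that of $z^2$ gives the uniqueness of $E_0(f)$ used above.

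The second main step is to build the inverse and to check that both maps are continuous. Surjectivity of $\Xi$, equivalently the existence of $\Xi^{-1}$, I would obtain by \emph{mating}: given $(g,c)\in\HH_0\times\M$, glue the external model of $g$ to a polynomial-like germ of $Q_c$ along boundary circles to form a topological degree-$2$ branched cover of the sphere, transport the standard complex structure on the inside copy and the invariant structure of $g$'s external model on the outside, obtain an invariant Beltrami form that vanishes on the prospective Julia set, and integrate it via the Measurable Riemann Mapping Theorem; this yields a germ $F(g,c)\in\CC$ with $\chi(F(g,c))=c$ and the external class of $g$, so $\Xi\circ F=\operatorname{id}$, while $F\circ\Xi=\operatorname{id}$ follows from rigidity. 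Continuity of $\Xi$ and of $F=\Xi^{-1}$ would follow from continuous dependence of the uniformizations on the germ (Caratheodory convergence of the relevant disks) and of solutions of the Beltrami equation on parameters (Ahlfors--Bers), together with the copy-of-$\M$ structure of the vertical fibres, which makes the $\M$-coordinate vary continuously and properly. Finally, over $\mathrm{int}(\M)$ everything can be made holomorphic: $\HH_0$ and each hybrid class $\HH_c$ are complex Banach submanifolds of $\QG$, the mating and straightening can be carried through holomorphic motions and the $\lambda$-lemma (giving ``horizontal'' analyticity, i.e.\ analyticity along the $\HH_0$-factor), and straightening restricted to $\mathrm{int}(\M)$ is holomorphic in $c$ by the classical Douady--Hubbard argument.

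The hard part will be the rigidity step: one must reconcile the internal hybrid conjugacy with the external conformal conjugacy across a possibly irregular Julia set and control the limit of the pull-back iteration, and this seems to be the only robust way to force equality of germs from equality of both the internal and the external class. The construction and continuity of the mating map is the second substantial ingredient, but once rigidity is available it should be fairly mechanical given the Ahlfors--Bers machinery.
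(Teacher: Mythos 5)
This theorem is not proved in the paper at all: it is imported verbatim from Lyubich (Lemma~4.3 and Theorem~4.13 of \cite{Ly}), so the only thing to compare your proposal against is the cited source, and your proposal is in essence a correct reconstruction of that standard Douady--Hubbard--Lyubich argument (straightening for the $\M$-coordinate, the normalized external class for the $\HH_0$-coordinate, rigidity for injectivity, mating for surjectivity, holomorphic motions for the analyticity statements). The only places where your sketch is glib are exactly the ones you flag: the gluing of the hybrid conjugacy $\phi$ with the external conformal conjugacy $\eta$ requires knowing they agree on $J_{f_1}$ (uniqueness of the boundary conjugacy), and the final power-series step needs that the glued map is a conformal automorphism of all of $\Chat$ fixing $\infty$, hence affine, before matching coefficients can force it to be the identity.
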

Here the first coordinate $f_0\in\HH_0$ of a pair 
$(f_0,c)\in\HH_0\times\M$ represents the external class so that 
$\{f_0\}\times\M$ is mapped homeomorphically onto $\ZZ_{f_0}\cap\CC$. This locus (this is, $\HH_c$) is moreover sitting smoothly in $\QG$.
\begin{teor}[\cite{Ly}, Lem.~4.3\& Thm~4.23]\label{smooth_vertical_fibers}
   The vertical fibers $\ZZ_{f_0}$, $f_0\in\HH_0$ are complex analytic curves.
\end{teor}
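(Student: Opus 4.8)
The plan is to exhibit $\ZZ_{f_0}$ as the image of an injective holomorphic immersion of a one‑dimensional complex manifold into $\QG$, and then to check that this image is closed and embedded, i.e.\ a complex analytic curve. Fix a representative $f_0\in\HH_0\cap\QM$, which we regard as carrying an external class $E_0$. The first step is to build, for each admissible piece of ``internal data'', a quadratic-like germ whose external class is $E_0$. When the candidate filled Julia set is to be connected, the internal datum is a point $c\in\M$ and we mate $E_0$ with the internal class of $Q_c$ by the standard quasiconformal surgery: glue the dynamics of a model with external class $E_0$ to that of $Q_c$ across the Julia set and integrate the resulting invariant Beltrami coefficient by the Measurable Riemann Mapping Theorem. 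When it is to be disconnected, the internal datum is the position of the escaping critical value, which lives in a region biholomorphic to a punctured disk, and one performs the analogous glueing. Assembling both regimes produces a single base $\Lambda_{f_0}$ — a priori a Riemann surface — together with a holomorphic family $\{f_\lambda:U_\lambda\to U_\lambda'\}_{\lambda\in\Lambda_{f_0}}$ of quadratic-like maps; this is exactly a \emph{full unfolded quadratic-like family} in Lyubich's sense (the content of \cite{Ly}, Cor.~4.24), whose connectedness locus maps onto $\M$ and which therefore realizes every germ of $\ZZ_{f_0}$.

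Next I would verify that the induced map $\iota:\Lambda_{f_0}\to\QG$, $\lambda\mapsto[f_\lambda]$, is holomorphic, has image exactly $\ZZ_{f_0}$, and is injective. Holomorphy is inherited from the parametric holomorphic dependence of the surgery — the glued Beltrami coefficients, and hence by Ahlfors--Bers the integrating maps and the resulting $f_\lambda$, depend holomorphically on $\lambda$ — combined with the fact that the complex structure on $\QG$ is precisely the one for which evaluation into the Banach slices $\whBB_V(f,\epsilon)$ is holomorphic. Surjectivity onto $\ZZ_{f_0}$ is the assertion that the pair (external class, internal datum) is a complete invariant of a quadratic-like germ, which is Lyubich's straightening together with the classification of the disconnected case; injectivity is the matching uniqueness, two parameters producing the same germ necessarily sharing the same internal datum.

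The hard part, and the place where one-dimensionality really enters, is transversality: I must show $d\iota$ is nowhere zero, and in fact that at every $f\in\ZZ_{f_0}$ one has a splitting $T_f\QG = T_f\HH_{c(f)}\oplus d\iota(T_\lambda\Lambda_{f_0})$, where $\HH_{c(f)}$ is the hybrid class through $f$, a codimension-$1$ complex submanifold by \thmref{Lyubichconnectedmating} and the results preceding it. Concretely, infinitesimal deformations of $f$ fixing its external class — Beltrami differentials conformal on $\Chat\setminus K_f$ and dynamically compatible near $K_f$ — should span a $1$-dimensional subspace transverse to the ``hybrid directions'' given by Beltrami differentials conformal on $K_f$. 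This is the same infinitesimal rigidity of the external class that makes $\ZZ_{f_0}\cap\CC$ behave like $\M$, and I would prove it by analyzing the derivative of the straightening map along the family, as in Lyubich's treatment of the vertical fibers. Granting transversality, $\iota$ is an injective holomorphic immersion of a $1$-manifold; since $\ZZ_{f_0}\cap\CC$ is compact (\cite{Ly}, Cor.~4.24) and the escaping part is properly embedded via the B\"ottcher-type coordinate on the escape region, the image is closed and embedded, so $\ZZ_{f_0}$ is a complex analytic curve — and uniformizing the escape region pins down $\Lambda_{f_0}$ up to biholomorphism as a disk, consistent with the ``unfolded'' normalization.
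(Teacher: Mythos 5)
The paper does not prove this statement at all: it is imported verbatim from Lyubich, with the bracketed citation \emph{[Ly, Lem.~4.3 \& Thm~4.23]} serving as the entire justification (the only related material in the text is the later remark that ``the proof in [Ly] that $\ZZ_{f_0}$ is a Riemann surface is done via families of representatives,'' together with \thmref{disconnectedmating}, which is Lyubich's Lemma~4.14). So there is no in-paper argument to compare yours against; what can be assessed is whether your reconstruction of Lyubich's proof is sound.

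As a roadmap it follows the right strategy (parametrize $\chZZ_{f_0}$ by the internal datum, use the mating of \thmref{disconnectedmating} on the escape locus, and use transversality to the hybrid classes), but it has a genuine gap exactly at the hardest point. You claim that holomorphy of $\iota:\Lambda_{f_0}\to\QG$ over the connected regime is ``inherited from the parametric holomorphic dependence of the surgery'' on $c\in\M$. This is false on $\partial\M$: the mating/straightening correspondence is only continuous there, and \thmref{Lyubichconnectedmating} asserts analyticity in $c$ only for $c\in int(\M)$. Indeed the failure of straightening to depend even quasiconformally on the internal parameter at boundary points is the raison d'\^etre of this paper and of \cite{LP}, so holomorphy of the fiber at germs over $\partial\M_{f_0}$ cannot be obtained by differentiating the surgery in $c$; Lyubich has to establish analyticity of the fiber \emph{as a subset of} $\QG$ across the connectedness locus by a separate argument, and this is precisely what Theorem~4.23 of \cite{Ly} supplies. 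Your transversality step has the same character: you state the required splitting $T_f\QG=T_f\HH_{c(f)}\oplus d\iota(T_\lambda\Lambda_{f_0})$ and then defer its proof to ``Lyubich's treatment of the vertical fibers,'' which is circular given that the statement being proved \emph{is} Lyubich's theorem on the vertical fibers. In short, outside the escape locus (where \thmref{disconnectedmating} genuinely gives you a holomorphic parametrization) your argument reduces to citing the result it is meant to prove; either do as the paper does and quote \cite{Ly} outright, or supply the analyticity argument at $\M_{f_0}$.
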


Lyubich defined the Teichm{\"u}ller-Sullivan metric on 
the space $\QM$ and $\QG$ as:
$$dist_{TS}(f_1,f_2) = \inf \log K_h,$$
where $h$ ranges over all hybrid equivalences in the sense of Douady and Hubbard (see e.g.\cite[Section 3.5]{Ly}). 
The Teichm{\"u}ller-Sullivan metric is a metric on the space of Douady and Hubbard external classes. But it is not a metric on the space of Lyubich external classes, because any two real-analytically conjugate external maps would have distance zero.

In order to define a good Teichm{\"u}ller distance between hybridly equivalent quadratic-like maps 
we ask additionally here that a hybrid equivalence $\phi$ between two polynomial-like maps $f_i:V_i\to V_i'\in\QM$, 
$i=1,2$ is a global quasiconformal map $\phi:\Chat\to\Chat$ fixing $\infty$ (and $0$), 
which restricts to a hybrid equivalence in the sense of Douady and Hubbard, 
i.e. $\phi(V_1)=V_2$, 
$\phi\circ f_1= f_2\circ\phi$ on $V_1$ and 
$\overline{\partial}\phi=0$ on $K_{f_1}$. 
Then we define the Teichm{\"u}ller distance between
$f_1$ and $f_2$ as 
$$
dist_T(f_1, f_2) :=\inf\{\log K_\phi\;|\;\phi:\Chat\to\Chat, \phi(\infty)=\infty  
\textrm{ is a hybrid conjugacy }\}.
$$
Note that for $g_i:\S^1\to\S^1$ the Lyubich respective external classes of $f_i$, we have by construction 
$$
dist_T(f_1, f_2) =\inf\log K_\phi
$$
where $\phi$ ranges over all reflection symmetric in $\S^1$ quasiconformal homeomorphisms $\phi:\Chat\to\Chat$ fixing $\infty$ and conjugating $g_1$ to $g_2$ on neighbourhoods of $\S^1$. 
Thus this defines a distance between the connectedness loci in vertical fibers. 

External maps and external classes are very useful objects, 
because they are in some sense canonical. 
However a quick look at the proof of Douady and Hubbards straightening theorem \cite[Prop~5]{DH} reveals that the actual engine that makes the proof work is the notion of pre-exterior equivalence introduced in the introduction.

For the convenience of the reader we repeat it here. 
For $\ga\subset\C$ a Jordan curve we denote by $D(\ga)$ and $W(\ga)$ respectively the bounded and 
the unbounded (containing $\infty$) connected component of $\Chat\sm\ga$.
\begin{defi}\label{pre-exterior}
A degree $d \geq 2$ pre-exterior map is a holomorphic map 
defined in a neighbourhood of a quasi-circle $\ga$ 
such that the restriction 
{\mapfromto {h_|} \ga {\ga' := h(\ga)\subset \C}} 
is a degree $d$ orientation preserving local diffeomorphism and $D(\ga) \subset \ov{D(\ga)} \subset D(\ga')$. 
For now we denote this pre-external map $(h, \ga)$ 
even though the curve $\ga$ is subordinate to $h$.
\end{defi}

Two pre-exterior maps $(h_i, \ga_i)$, $i=1,2$ are called \textit{pre-exteriorly equivalent}, 
if and only if there exists a biholomorphic map $B$ between the disks
$W_i= W(\ga_i)$ with $B(\infty) = \infty$, 
whose homeomorphic extension to the boundary satisfies 
$B\circ h_1 = h_2\circ B$ on $\ga_1$ and 
thus $B(\ga'_1) = \ga'_2$. 
A \textit{pre-exterior quasiconformal equivalence} is a quasiconformal homeomorphism $\phi$ between 
the disks $W_i$ with $\phi(\infty) = \infty$ and whose homeomorphic extension to the boundary satisfies $\phi\circ h_1 = h_2\circ \phi$ on $\ga_1$ and thus $\phi(\ga'_1) = \ga'_2$.

Note that for any polynomial-like map {\mapfromto f U {U'\subset\subset\C}}, 
(i.e~including those with disconnected filled-in Julia set), 
for any quasi-circle $\ga'\subset U'\sm\ov{U}$ 
such that $D(\ga')$ contains all critical values of $f$ 
and $\ga := f^{-1}(\ga')$, 
the restriction {\mapfromto {h = f_|}\ga{\ga'}} is a pre-exterior map.
Note also that if $\partial U, \partial U'$ are quasi-circles 
and $f$ extends locally univalently 
to a neighbourhood of $\partial U$, 
then we may take $\ga = \partial U$ in this construction. 

Moreover any pre-exterior (qc-)equivalence 
{\mapfromto{(\phi) B}{W_1}{W_2}} extends to a (quasi) conformal conjugacy on neighbourhoods of the boundary 
arcs $\gamma_i$ by lifting under the dynanics 
and for the $B$ this extension is the unique analytic extension.
Note that in the q.c.~case the map $\phi$ and the lift 
of $\phi\circ h_1$ to $h_2$ will only coincide on $\ga_1$ 
and for the extension we keep only the part of the lift in $D_1$. 

The proof of the fundamental mating theorem between polynomials of degree $d$ and degree $d$ external classes also proves a similar mating theorem between polynomials and pre-exterior classes:
\begin{teor}[Prop~5 \& Prop~6,\cite{DH}]\label{connectedmating}
Let $(h,\ga)$ be a degree $d\geq 2$ pre-exterior map 
and let $P$ be a degree $d$ polynomial 
with connected filled-in Julia set $K_P$. 
Then there exists a polynomial-like map {\mapfromto f U {U'}} 
hybridly equivalent to $P$ such that $U, U'$ are quasi-disks, 
$f$ extends locally univalently to a neigbourhood 
of $\ga_U = \partial U$ and 
the pre-exterior map $(f, \ga_U)$ is equivalent to $(h, \ga)$. 
Moreover in degree $d = 2$, e.g. $P = Q_c$ and for $P(z) = z^d$ 
the map $f$ is unique up to affine conjugacy.
\end{teor}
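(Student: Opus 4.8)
The statement is a variant of the Douady--Hubbard straightening theorem (\cite{DH}, Prop.~5 and 6), obtained by the same quasiconformal surgery, with the pre-exterior map $(h,\ga)$ playing the role that an external circle map plays in the classical mating theorem; below I only indicate the construction. First, after shrinking the neighbourhood of $\ga$ on which $h$ is defined, let $\ga_{1/2}$ be the component of $h^{-1}(\ga)$ lying near $\ga$. Then $\ga_{1/2}$ is an analytic Jordan curve with $\overline{D(\ga_{1/2})}\subset D(\ga)$, and $h$ is holomorphic and proper of degree $d$ from a neighbourhood of the closed annulus $\overline{D(\ga)}\setminus D(\ga_{1/2})$ onto a neighbourhood of $\overline{D(\ga')}\setminus D(\ga)$, carrying $\ga_{1/2}$ to $\ga$ and $\ga$ to $\ga'$. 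On the other side, since $K_P$ is connected we may use the B\"ottcher coordinate of $P$ at $\infty$ and a large equipotential to get a degree $d$ polynomial-like restriction $P:V\to V'$ with $K_P\subset\subset V$, analytic boundaries, and all finite critical points of $P$ contained in $K_P$. Finally fix a quasiconformal homeomorphism $\Theta_0$ from a small closed quasidisc $\overline{D_0}\subset D(\ga)$ onto $\overline{V'}$ that is conformal on $\Theta_0^{-1}(K_P)$ (for instance a Riemann map, modified only in a collar of the boundary).

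Next I would build a quasiregular degree $d$ branched covering $\hat F:\overline{D(\ga)}\to\overline{D(\ga')}$ by setting $\hat F=\Theta_0^{-1}\circ P\circ\Theta_0$ on $\Theta_0^{-1}(\overline V)$, setting $\hat F=h$ on $\overline{D(\ga)}\setminus D(\ga_{1/2})$, and interpolating on the remaining annulus $\mathcal A$ by a critical-point free degree $d$ covering whose modulus is arranged to lie in a fixed compact subinterval of $(0,\infty)$, so that this interpolation is $K_0$-quasiconformal with $K_0$ absolute. Then $\hat F$ is genuinely holomorphic off $\mathcal A$, its only critical points lie in $K_{\hat F}:=\Theta_0^{-1}(K_P)$, and --- because the defining inclusion $\overline{D(\ga)}\subset D(\ga')$ forces the $h$-collar to move points ``outward'' towards $\ga'$ --- every forward $\hat F$-orbit which is not contained in $K_{\hat F}$ meets $\mathcal A$ at most once and afterwards leaves $D(\ga)$. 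Extend $\hat F$ by $h$ across $\ga$, using that $h$ is defined on a full neighbourhood of $\ga$. I would then define a Beltrami form $\mu$ on $\Chat$ by: $\mu=0$ on $\Chat\setminus D(\ga)$ and on $K_{\hat F}$; on $\overline{D_0}$, $\mu$ is the pull-back by $\Theta_0$ of the trivial form (so that $\Theta_0^{-1}\circ P\circ\Theta_0$ is $\mu$-holomorphic and $\mu=0$ on $K_{\hat F}$); and $\mu$ is propagated $\hat F$-invariantly on the rest of $D(\ga)$. The at-most-once-crossing property together with the holomorphy of $h$ and of the transplanted copy of $P$ gives $\|\mu\|_\infty<1$, so the Measurable Riemann Mapping Theorem yields a quasiconformal $\psi:\Chat\to\Chat$ with $\psi(\infty)=\infty$ and $\mu_\psi=\mu$ a.e.

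Set $U:=\psi(D(\ga))$, $U':=\psi(D(\ga'))$ and $f:=\psi\circ\hat F\circ\psi^{-1}:U\to U'$. Since $\hat F$ is $\mu$-holomorphic and $\psi$ straightens $\mu$, the map $f$ is holomorphic and proper of degree $d$, and $\overline U\subset U'$ because $\overline{D(\ga)}\subset D(\ga')$; thus $f$ is a polynomial-like map and, $\psi$ being quasiconformal, $U$ and $U'$ are quasidiscs. As $\mu\equiv0$ on a full neighbourhood of $\overline{W(\ga)}$ --- no surgery was done there --- $\psi$ is conformal there; hence $\ga_U:=\partial U=\psi(\ga)$ is a quasicircle, $f$ agrees with $\psi\circ h\circ\psi^{-1}$ near $\ga_U$ and therefore extends locally univalently across $\ga_U$, and $B:=(\psi|_{\overline{W(\ga)}})^{-1}:W(\ga_U)\to W(\ga)$ is a biholomorphism fixing $\infty$. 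For $w\in\ga$ one computes $B(f(\psi(w)))=B(\psi(h(w)))=h(w)=h(B(\psi(w)))$, using $h(w)\in\ga'\subset\overline{W(\ga)}$; so $B\circ f=h\circ B$ on $\ga_U$, i.e. $(f,\ga_U)$ is pre-exteriorly equivalent to $(h,\ga)$. Finally $\chi:=\psi\circ\Theta_0^{-1}$, defined near $K_P$, conjugates $P$ to $f$, and $\overline\partial\chi=0$ on $K_P$ because $\Theta_0^{-1}$ is conformal on $K_P$ while $\psi$ is conformal on $K_{\hat F}=\Theta_0^{-1}(K_P)$; hence $\chi$ is a hybrid equivalence and $f$ is hybridly equivalent to $P$.

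For uniqueness when $d=2$ (and likewise for $P(z)=z^d$): given two solutions $f_1,f_2$, composing their pre-exterior equivalences with $(h,\ga)$ produces a biholomorphic pre-exterior equivalence between $(f_1,\ga_{U_1})$ and $(f_2,\ga_{U_2})$; since $f_1$ and $f_2$ are also internally equivalent (both hybridly equivalent to $P$), Corollary~\ref{extension_of_qc-pre-equivalence} upgrades this to a hybrid equivalence $f_1\to f_2$ with dilatation bounded by that of the pre-exterior equivalence, namely $1$, so the hybrid equivalence is conformal; a conformal hybrid equivalence between the normalized germs of two quadratic-like maps is affine, which gives the claimed uniqueness up to affine conjugacy. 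The step I expect to be most delicate is the surgery bookkeeping in the second paragraph --- choosing $\Theta_0$ and the interpolating annulus so that $\hat F$ is globally a degree $d$ branched cover with critical set exactly $K_{\hat F}$, with uniformly bounded interpolation dilatation, and, above all, so that no orbit revisits $\mathcal A$; this last point is exactly where the pre-exterior hypothesis $\overline{D(\ga)}\subset D(\ga')$ enters and is what guarantees $\|\mu\|_\infty<1$. Keeping $\mu$ trivial on all of $\overline{W(\ga)}$ rather than performing surgery there is what makes $B$ biholomorphic, i.e. what turns ``same external class'' into genuine pre-exterior equivalence.
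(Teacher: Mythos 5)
Your overall route --- redo the Douady--Hubbard surgery with the pre-exterior map $(h,\ga)$ playing the role of the external circle map, then obtain uniqueness by gluing a biholomorphic pre-exterior equivalence to the internal conjugacy along the Julia set --- is exactly the route the paper intends: its own proof consists of the observation that the arguments of Propositions 5 and 6 of \cite{DH} only ever use the induced pre-exterior data, and then cites them. Your uniqueness paragraph is correct in substance (a conformal pre-exterior equivalence plus a hybrid equivalence glue to a conformal automorphism of $\Chat$ fixing $\infty$, hence an affine map), though invoking Corollary~\ref{extension_of_qc-pre-equivalence} is logically awkward, since that corollary is phrased in terms of the normalized representatives supplied by Corollary~\ref{from_pre_exterior_to_exterior}, itself a consequence of the theorem you are proving; what you actually need is only the lifting-and-gluing argument inside its proof.

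The existence half, however, has a genuine gap at exactly the step you flagged. With your choices, $\mathcal{A}=D(\ga_{1/2})\setminus\Theta_0^{-1}(\overline V)$ and the interpolation maps $\mathcal{A}$ as a degree $d$ covering onto $D(\ga)\setminus\overline{D_0}$. Because $\overline{D_0}$ is a \emph{small} disc well inside $D(\ga_{1/2})$, this image contains the nonempty open set $D(\ga_{1/2})\setminus\overline{D_0}\subset\mathcal{A}$, so orbits can re-enter $\mathcal{A}$; nothing in the construction prevents an orbit from returning to $\mathcal{A}$ arbitrarily many times, or indeed from never leaving it (the non-escaping set of the interpolating annulus covering). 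On such points the $\hat F$-invariant Beltrami form accumulates dilatation at every passage, so $\|\mu\|_\infty<1$ fails and the Measurable Riemann Mapping Theorem cannot be applied. The outward-moving property of the $h$-collar is irrelevant here, since the problematic returns occur before the orbit ever reaches the collar. The standard repair is to make the surgery region a genuine fundamental annulus: take $D_0=D(\ga_{1/2})$, i.e.\ let $\Theta_0$ carry $\overline{V'}$ onto $\overline{D(\ga_{1/2})}$. Then the transplanted polynomial maps $\Theta_0^{-1}(\overline V)$ onto $\overline{D(\ga_{1/2})}$, the interpolation on $\mathcal{A}=D(\ga_{1/2})\setminus\Theta_0^{-1}(\overline V)$ is forced to cover exactly $D(\ga)\setminus\overline{D(\ga_{1/2})}$, which is disjoint from $\mathcal{A}$ and lies in the collar where $\hat F=h$ is holomorphic and escapes $D(\ga)$ in one step, and $\hat F^{-1}(\mathcal{A})$ lies in $\Theta_0^{-1}(V)$ where $\hat F$ is holomorphic. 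Every orbit then crosses $\mathcal{A}$ at most once, $\|\mu\|_\infty$ is controlled by the dilatations of the interpolation and of $\Theta_0$, and the rest of your argument (conformality of $\psi$ on $\overline{W(\ga)}$, the identity $B\circ f=h\circ B$ on $\ga_U$, and the hybrid equivalence $\psi\circ\Theta_0^{-1}$) goes through. A cosmetic point: $\ga_{1/2}$ is only a quasicircle, not an analytic curve, since $\ga$ itself is merely a quasicircle.
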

\begin{proof}
The existence part is \cite[Proposition 5, p.~301]{DH}, 
which is formulated in terms of external maps {\mapfromto h \Sen \Sen}, 
but for which the proof uses only a(ny) induced pre-exterior map. 

The uniqueness part is \cite[Proposition 6, p.~302]{DH}, 
combined with the definition of equivalence of pre-exterior maps, 
in the notation of the proof of \cite[Proposition 6, p.~302]{DH} 
the map $\psi$ is global, 
i.e.~is an isomorphism {\mapfromto {\psi} {\Chat\sm K_f}{\Chat\sm K_g}}.
\end{proof}
\begin{cor}\label{from_pre_exterior_to_exterior}
For any degree $d\geq 2$ pre-exterior map $(h, \ga)$  
there exists a polynomial-like germ $f_0(z) = z^d + \OO(z^{d+1})$ 
unique up to conjugacy by $az$, where $a^{d-1} = 1$,  
which has a polynomial-like restriction, {\mapfromto {f_0}{U_0}{U'_0}}, 
such that $(f_0, \partial U_0)$ is a pre-exterior map equivalent to $(h, \ga)$.
\end{cor}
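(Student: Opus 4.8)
The plan is to obtain this as a corollary of Theorem~\ref{connectedmating} applied to the polynomial $P(z) = z^d$, whose filled-in Julia set $K_P = \Dbar$ is connected, followed by an affine normalization.

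For existence, I would feed $P(z) = z^d$ and the given pre-exterior map $(h,\ga)$ into Theorem~\ref{connectedmating}: this produces a polynomial-like map {\mapfromto f U {U'}} with $U, U'$ quasi-disks, hybridly equivalent to $z^d$, extending locally univalently across $\ga_U = \partial U$, and such that $(f, \ga_U)$ is equivalent to $(h, \ga)$; since $P(z) = z^d$, the last clause of Theorem~\ref{connectedmating} moreover gives that $f$ is unique up to affine conjugacy. As $f$ is hybridly equivalent to $z^d$, its unique critical point is a superattracting fixed point of local degree $d$; after translating it to $0$ we have $f(z) = c_d z^d + \OO(z^{d+1})$ with $c_d\neq 0$, and conjugating by $z\mapsto bz$ with $b^{d-1} = 1/c_d$ yields $f_0(z) := b^{-1}f(bz) = z^d + \OO(z^{d+1})$, with polynomial-like restriction {\mapfromto{f_0}{U_0}{U_0'}} on the images $U_0, U_0'$ of $U, U'$. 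The affine map $z\mapsto bz$ restricts to a biholomorphism $W(\ga_U)\to W(\partial U_0)$ fixing $\infty$ and conjugating $f$ to $f_0$, so $(f_0, \partial U_0)$ is a pre-exterior map pre-exteriorly equivalent to $(f, \ga_U)$, and hence, composing biholomorphisms, to $(h, \ga)$.

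For uniqueness, I would take two such pre-exterior maps $(f_0, \partial U_0)$ and $(g_0, \partial V_0)$ with $f_0, g_0$ normalized to $z^d + \OO(z^{d+1})$. Such a germ has $0$ as its only critical point, of local degree $d$, and $0$ is fixed, so $0\in K_{f_0}\cap K_{g_0}$; hence both filled-in Julia sets are connected and, carrying a full-degree superattracting fixed point, both germs are hybridly equivalent to $z^d$. Composing the two given pre-exterior equivalences shows that $(f_0,\partial U_0)$ and $(g_0,\partial V_0)$ are pre-exteriorly equivalent, so by the uniqueness clause of Theorem~\ref{connectedmating} in the case $P(z) = z^d$ there is an affine map $A(z) = \alpha z + \beta$ with $g_0 = A^{-1}\circ f_0\circ A$. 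Since both critical points sit at $0$ we get $\beta = 0$, whence $g_0(z) = \alpha^{-1}f_0(\alpha z) = \alpha^{d-1}z^d + \OO(z^{d+1})$; comparing with $g_0(z) = z^d + \OO(z^{d+1})$ forces $\alpha^{d-1} = 1$, i.e.\ $A(z) = az$ with $a^{d-1} = 1$. Conversely, conjugating $f_0$ by such an $az$ again gives a germ of the same normalized form whose induced pre-exterior map is affinely, hence pre-exteriorly, equivalent to that of $f_0$, so the ambiguity is exactly this finite group.

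There is no genuine obstacle here — the statement is bookkeeping built on Theorem~\ref{connectedmating} — but the two points I would be careful to state are: (a) pre-exterior equivalence is transitive, which is immediate by composing the biholomorphic conjugacies together with their (unique analytic) boundary extensions; and (b) a germ normalized to $z^d + \OO(z^{d+1})$ possessing a polynomial-like restriction is automatically hybridly equivalent to $z^d$, by the connectedness argument above, so that the uniqueness half of Theorem~\ref{connectedmating} really does apply to any two competing candidates and not merely to the one manufactured by the mating construction.
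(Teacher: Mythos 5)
Your proof is correct and is essentially the argument the paper intends (and leaves implicit): apply Theorem~\ref{connectedmating} with $P(z)=z^d$, normalize the resulting map by an affine conjugacy to the form $z^d+\OO(z^{d+1})$, and use the uniqueness clause plus the fact that a linear conjugacy preserving this normal form must satisfy $a^{d-1}=1$. Your two flagged points (transitivity of pre-exterior equivalence, and that any normalized candidate is automatically hybridly equivalent to $z^d$ because its unique critical point is a fixed point of full local degree) are exactly the details needed to make the uniqueness half apply to arbitrary competitors, so nothing is missing.
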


The notion of pre-exterior equivalence is sufficient for us to prove the main theorem of this section stated as Thm.~\ref{Uniformly_qc_copies} below. However the notion of pre-exterior equivalence does not project to QG, i.e. it is not well defined on germs, because two different representatives of the same quadratic-like germ are not in general pre-exteriorly equivalent. 
In order to remedy this and better connect with the notion of external class and provide the description of external class in the case of disconnected filled-in Julia set as promised above we shall introduce the notions of exterior map, exterior equivalence and exterior class. 

\begin{defi} 
A \emph{degree $d \geq 2$ exterior map}  
$h$ is a degree $d$ holomorphic covering map
{\mapfromto h V {V'}}, where 
\begin{enumerate}
\item
$V, V'\subset\subset\C$ are topological annuli with common inner topological boundary $J$ and quasi-circle outer boundaries
\item
the outer boundary of $V$ contained in $V'$ 
\item
$h(z)\to \partial J$ as $z\to\partial J$. 
\end{enumerate}
We call co-domain of $h$ the topological disk $W = W_h\subset\Chat$, 
with $(V'\cup\{\infty\})\subset W$ and boundary $J$. 
\end{defi}

    Let $f:V\to V'$ be a polynomial-like map of degree $d\geq 2$ with connected filled-in Julia set and $V\subset\subset V'\subset\subset\C$ quasi-disks. 
    Then the restriction $f:V\setminus K_f\to V'\setminus K_f$ is an exterior map with inner boundary $J_f$. And any external map of degree $d$ has a complex analytic extension, which is an exterior map with inner boundary $\S^1$.

Two exterior maps {\mapfromto {h_i} {V_i} {V'_i\subset W_i}}, $i= 1,2$ are called equivalent if and only if 
there exists a biholomophic map {\mapfromto B {W_1}{W_2}} with $B(\infty) = \infty$ and 
$B\circ h_1 = h_2\circ B$ on some annulus $\whV\subset V$ with inner boundary $J_1$. 

For {\mapfromto{f}{U}{U'\subset\subset\C}} 
a polynomial-like map with connected filled-in Julia set $K_f$ as above
the inner boundary $J_h$ of the exterior map 
$h= f_{|U\sm K_f}$ coincides with the Julia set 
$J_f = \partial K_f$ of $f$. 
We define the \emph{exterior class} of $f$ as the equivalence class of $h_f$. 
The complex analytic extension of the Lyubich external map for $f$ belongs to the exterior equivalence class of $f$.
Corollary~\ref{from_pre_exterior_to_exterior} shows that any pre-exterior map $(h, \ga)$ defines a unique equivalence class of exterior maps.

\begin{cor}\label{extension_of_qc-pre-equivalence}
Let {\mapfromto{\phi} {W(\ga_1)} {W(\ga_2)}} be a pre-exterior 
q.c.-equivalence between two quadratic pre-exterior maps 
$(h_i, \ga_i)$ 
and let {\mapfromto{f_i} {U_i}{U_i'}}, $[f_i]\in\HH_0$ 
be the quadratic-like maps, 
which are pre-exteriorly equivalent to $(h_i, \ga_i)$, $i=1,2$
as given by \corref{from_pre_exterior_to_exterior}. 
Let $B_i$ be the pre-exterior equivalences. 
Then $\phi$ induces a unique q.c.~homeomorphism 
{\mapfromto {\phi_0} \Chat \Chat}, 
such that the restriction 
{\mapfromto{\phi_0}{U_1'}{U_2'}} is 
a hybrid conjugacy of $f_1$ to $f_2$ and 
$\phi_0 = B_2\circ\phi\circ B_1^{-1}$, 
where the composition is defined. 
Moreover the dilatation of $\phi_0$ is bounded by that of $\phi$.
\end{cor}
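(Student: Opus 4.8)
The plan is to build $\phi_0$ by interpolating between the given pre-exterior q.c.-equivalence $\phi$ on the exterior side and a hybrid conjugacy obtained by an iterated-lifting (pullback) argument on the Julia-set side, and then to bound the dilatation. First I would observe that, by \corref{from_pre_exterior_to_exterior} and the remark following \thmref{connectedmating}, for $i=1,2$ the map $B_i : W(\ga_i) \to \Chat\sm K_{f_i}$ is a biholomorphism with $B_i(\infty)=\infty$ conjugating $h_i$ to the exterior restriction $f_i$ on a neighbourhood of $\ga_i$; in particular $B_i$ extends analytically past $\ga_i$ by lifting under the dynamics, as noted in the paragraph after Definition \ref{pre-exterior}. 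Hence the composition $\psi := B_2\circ\phi\circ B_1^{-1}$ is a q.c.-homeomorphism from a neighbourhood of $\Chat\sm K_{f_1}$ (more precisely from $\Chat\sm U_1$, once we fix quasi-disk restrictions as in \thmref{connectedmating}) onto the corresponding subset of $\Chat\sm K_{f_2}$, fixing $\infty$, conjugating $f_1$ to $f_2$ on $\ga_1=\partial U_1$, with the same dilatation as $\phi$ since $B_1,B_2$ are conformal.

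Next I would promote $\psi$ to a global hybrid conjugacy by the standard Douady--Hubbard pullback. Using the conformal extension of $\psi$ across $\ga_1$ coming from the $B_i$'s, one has $\psi\circ f_1 = f_2\circ\psi$ on an annular neighbourhood of $\ga_1$, and $\psi$ already conjugates dynamics on $U_1'\sm U_1$; pulling back the Beltrami coefficient $\mu_\psi$ by the iterates of $f_1$ (and declaring it zero on $K_{f_1}$) yields an $f_1$-invariant Beltrami form $\mu$ on $\Chat$ with $\|\mu\|_\infty = \|\mu_\psi\|_\infty$, because $f_1$ is holomorphic so pullback does not increase dilatation, and each point of $\Chat\sm K_{f_1}$ visits the fundamental annulus $U_1'\sm U_1'{}^{-1}$ finitely often. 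Applying the Measurable Riemann Mapping Theorem gives $\Psi$ with $\mu_\Psi=\mu$, normalized to fix $\infty$, $0$, $1$; then $\Psi\circ f_1\circ\Psi^{-1}$ is a holomorphic quadratic-like map hybridly equivalent to $f_1$, hence (as $[f_1]\in\HH_0$, i.e. it is already the normal form $z^2+\OO(z^3)$) equal to $f_1$ itself after absorbing the affine normalization, which forces the quotient map $\Psi^{-1}\circ\text{(conformal)}\circ\psi$ to be conformal off $K_{f_1}$ and conformal on $K_{f_1}$ as well — the usual argument giving that the resulting self-correction is a Möbius map, hence the identity by the normalization. This produces the desired $\phi_0:\Chat\to\Chat$ with $\phi_0=\psi=B_2\circ\phi\circ B_1^{-1}$ where the latter is defined, with $\phi_0(U_1')=U_2'$, $\phi_0\circ f_1=f_2\circ\phi_0$ on $U_1'$, $\overline\partial\phi_0=0$ a.e. on $K_{f_1}$, and $K_{\phi_0}\le K_\phi$.

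For uniqueness, if $\phi_0$ and $\phi_0'$ were two such hybrid conjugacies agreeing with $B_2\circ\phi\circ B_1^{-1}$ where defined, then $\phi_0'\circ\phi_0^{-1}$ is a q.c.-homeomorphism of $\Chat$ fixing $\infty$ which is conformal a.e. (identity off $K_{f_2}$ by the matching with $\psi$ and propagation by the dynamics; $\overline\partial=0$ a.e. on $K_{f_2}$ by the hybrid condition), hence conformal, hence Möbius, and being the identity on a set of positive measure it is the identity — this is precisely the iterated-lifting-plus-Rickman-Lemma argument referred to just before the statement. The step I expect to be the main obstacle is bookkeeping the \emph{domains}: one must fix compatible quasi-disk restrictions $U_i\subset\subset U_i'$ for which $f_i$ extends univalently past $\partial U_i$ and $B_i$ is defined and conformal up to and past $\ga_i=\partial U_i$, so that the composition $B_2\circ\phi\circ B_1^{-1}$ makes sense on the right annulus and the pullback scheme closes up; all the dilatation bounds are then soft, since only holomorphic pullbacks and the conformal maps $B_i$ enter, so $\|\mu_{\phi_0}\|_\infty=\|\mu_\phi\|_\infty$, i.e. $K_{\phi_0}\le K_\phi$ with equality.
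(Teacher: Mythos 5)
Your first paragraph matches the paper's opening move: both form $\psi=\phi_0=B_2\circ\phi\circ B_1^{-1}$, a pre-exterior q.c.-equivalence between $f_1$ and $f_2$ with the same dilatation as $\phi$. The divergence --- and the gap --- is in your second paragraph. The paper does \emph{not} invoke the Measurable Riemann Mapping Theorem here; it extends $\psi$ itself to $\Chat\setminus K_{f_1}$ by iterated lifting of $\psi\circ f_1$ under $f_2$ (which preserves the dilatation because the lifts are through holomorphic maps), fills in the interiors of the filled Julia sets by the unique biholomorphic conjugacy fixing $0$ with derivative $1$ (recall $[f_i]\in\HH_0$, so each $K_{f_i}$ is a quasi-disk with superattracting interior dynamics), observes that both pieces extend to the same orientation-preserving boundary conjugacy on $J_1=\partial K_{f_1}$, and glues by the q.c.\ gluing (Rickman) lemma. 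This yields in one stroke a global q.c.\ map that equals $B_2\circ\phi\circ B_1^{-1}$ where that composition is defined, is conformal on $K_{f_1}$ (hence hybrid), and has dilatation bounded by that of $\phi$.

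Your MRMT detour has two concrete defects. First, the step ``$\Psi\circ f_1\circ\Psi^{-1}$ is hybridly equivalent to $f_1$, hence (as $[f_1]\in\HH_0$) equal to $f_1$'' is false: $\HH_0$ is an entire hybrid class --- a codimension-one submanifold of $\QG$ containing infinitely many distinct germs --- and hybridly equivalent quadratic-like maps need not coincide. The whole point of pulling back $\mu_\psi$ is to \emph{change} the external structure, so the straightened map should be identified with $f_2$ (via the uniqueness part of Corollary \ref{from_pre_exterior_to_exterior}), not with $f_1$; as written, $f_2$ disappears from your construction entirely. Second, even after that identification, the map $\Psi$ produced by the MRMT need not agree with $\psi$ on $\Chat\setminus U_1$: the two maps share a Beltrami coefficient there, so they differ by a conformal map of $\psi(\Chat\setminus U_1)$ fixing $\infty$, and since that domain is not all of $\Chat$ this conformal map is not forced to be the identity by any normalization. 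Hence your construction does not deliver the required property $\phi_0=B_2\circ\phi\circ B_1^{-1}$ where the composition is defined. The direct lifting-plus-gluing argument avoids both issues; your closing remarks on uniqueness and on domain bookkeeping are essentially sound once the construction is repaired along those lines.
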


\begin{proof}
The map {\mapfromto{\phi_0 =B_2\circ\phi\circ B_1^{-1}}{B_1(W_1)}{B_2(W_2)}} 
defines a pre-exterior q.c.~equivalence between $f_1$ and $f_2$. 
By iterated lifting of $\phi_0\circ f_1$ to $f_2$, 
the map $\phi_0$ extends to a q.c.
conjugacy $\phi_0$ defined on the complement of the filled Julia sets and fixing infinity.
Moreover the filled-in Julia sets $K_i$ of 
$f_i$ are quasi-disk and with interior dynamics uniquely conjugate 
by a biholomorphic map fixing $0$ with derivative $1$. 
Both this interior conjugacy and the conjugacy $\phi_0$
extend to the unique orientation preserving conjugacy of 
the restriction of $f_1$ to $J_1 = \partial K_1$ to 
the restriction of $f_2$ to $J_2 = \partial K_2$. 
Hence by the q.c.~gluing lemma they glue to form a global 
q.c.~homeomorphism {\mapfromto {\phi_0} \Chat \Chat} 
with the stated properties.
\end{proof}

In line with Lyubich's terminology we shall say that $(h, \ga)$ 
is a quadratic pre-exterior map, if $d=2$ and $0\in\D(\ga)$.
Recall that, by Corollary \ref{from_pre_exterior_to_exterior}, any quadratic pre-exterior map $(h,\ga)$ is equivalent to the restriction of a unique quadratic-like map 
$f_0(z) = z^2 + \OO(z^3)$.

\begin{teor}[Lyubich]\label{disconnectedmating}
Let {\mapfromto {f_0}{U_0}{U_0'}}, $f_0(z) = z^2 + \OO(z^3)$ be a quadratic like map 
with quasi-circle bounded domain and range and $f_0$ extending locally univalently 
to a neighbourhood of $\partial U_0$. Then for any $w\in U_0'\sm K_{f_0}$ there exists 
a unique quadratic-like map {\mapfromto {f_w}{U_w}{U_w'}}, $f_w(z) = c(w) + z^2 + \OO(z^3)$ 
and a pre-exterior equivalence of $(f_w,\partial U_w)$ to $(f_0, \partial U_0)$, 
{\mapfromto {B_w}{W_w}\Chat} with $c(w)\in W_w$, $B_w(c(w)) = w$ 
and $f_w(\ov{W}_w\cap U_w)\subset W_w$. 

Moreover $(w,z) \mapsto f_w(z)$ and 
$(w,z) \mapsto B_w(z)=B(w,z)$ are complex analytic 
as a function of two complex variables.
\end{teor}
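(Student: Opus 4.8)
The plan is to reproduce, in the disconnected regime, the quasiconformal surgery that underlies the Douady--Hubbard straightening theorem and Theorem~\ref{connectedmating}, while keeping track of holomorphic dependence on the parameter $w$. I would fix once and for all the exterior data of $f_0$: the quasi-circle $\ga_0 = \partial U_0$, its image quasi-circle $\ga_0' = f_0(\ga_0)$, and the degree~$2$ exterior dynamics $h_0 = f_0|_{U_0\sm K_{f_0}}$. For $f_0$ the critical value $0$ lies in $K_{f_0}$; in the deformed maps $f_w$ I would instead place the critical value \emph{outside} the domain, so that the critical orbit escapes on the first iterate and $K_{f_w}$ is a Cantor set of zero area. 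It is precisely the absence of any interior dynamics to be matched (beyond a null set) that will make both the construction and its uniqueness go through; in particular the rigid normalization $f_w(z)=c(w)+z^2+\OO(z^3)$ is what will pin down the otherwise projective freedom.

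\emph{Construction for fixed $w$.} For $w\in U_0'\sm K_{f_0}$ (after replacing $\ga_0$ by a suitable $h_0$-pullback quasi-circle one may assume $w$ lies in the collar $U_0'\sm\ov{U_0}$), I would assemble a topological model: a degree~$2$ branched covering whose restriction to an outer collar is a marked copy of the collar dynamics of $h_0$ and whose branch value is the prescribed point $w$, and I would carry on it the almost-complex structure obtained by pulling the standard structure of $W_0$ back under the model dynamics. This structure is invariant under the model map and has dilatation bounded by that of a single quasiconformal collar chart, hence uniformly bounded. Integrating it by the Measurable Riemann Mapping Theorem and conjugating by a uniquely determined affine map produces a holomorphic quadratic-like map $f_w\colon U_w\to U_w'$ with $f_w(z)=c(w)+z^2+\OO(z^3)$ and with $U_w,U_w'$ quasi-disks; restricting the integrating map to the exterior and descending gives the biholomorphism $B_w\colon W_w\to W_0$ conjugating the exterior dynamics of $f_w$ to $h_0$, normalized by $B_w(\infty)=\infty$, and after fixing the base point one arranges $B_w(c(w))=w$. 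Shrinking $U_w$ if necessary, and using compactness of $\ga_0$ and $\ga_0'$ to keep the choice uniform, one secures $c(w)\in W_w$ and the escaping condition $f_w(\ov W_w\cap U_w)\subset W_w$, together with the local univalent extension of $f_w$ past $\partial U_w$ (the latter by analytic continuation of $B_w$ under the dynamics, as recalled after Definition~\ref{pre-exterior}).

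\emph{Uniqueness for fixed $w$.} If $(f^{(1)}_w,B^{(1)}_w)$ and $(f^{(2)}_w,B^{(2)}_w)$ both solve the problem, I would set $\Psi := (B^{(2)}_w)^{-1}\circ B^{(1)}_w$, a biholomorphism $W^{(1)}_w\to W^{(2)}_w$ with $\Psi(\infty)=\infty$ that conjugates the exterior dynamics and sends the critical value of $f^{(1)}_w$ to that of $f^{(2)}_w$. By iterated lifting under the dynamics, exactly as in the proof of \corref{extension_of_qc-pre-equivalence} but now with Cantor filled-in Julia sets, $\Psi$ extends to a conformal conjugacy between the complements of the two Julia sets; since a Cantor Julia set of a disconnected quadratic-like map is conformally removable, $\Psi$ extends to a conformal automorphism of $\Chat$ fixing $\infty$, i.e. an affine map. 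As $c+z^2+\OO(z^3)$ is rigid under affine conjugacy, $\Psi=\mathrm{id}$, whence $f^{(1)}_w=f^{(2)}_w$ and $B^{(1)}_w=B^{(2)}_w$; this is the disconnected-case counterpart of the uniqueness in \corref{from_pre_exterior_to_exterior}.

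\emph{Holomorphic dependence on $w$ --- the main obstacle.} The only $w$-dependent ingredient of the construction is the position of the branch value of the model, and I would set things up so that it moves holomorphically (literally following $w\in U_0'\sm K_{f_0}\subset\C$), with all other data fixed; equivalently, fixing a base solution $f_{w_1}$, the maps $f_w$ are obtained from it by conjugation with the solution $\phi_w$ of the Beltrami equation for the unique $f_{w_1}$-invariant Beltrami coefficient $\mu_w$, supported in the basin of infinity, that realises the prescribed shift of the critical value from $w_1$ to $w$. Then $\|\mu_w\|_\infty\le k<1$ uniformly and $w\mapsto\mu_w(\zeta)$ is holomorphic for a.e.\ $\zeta$, so the dependence-on-parameters part of the Measurable Riemann Mapping Theorem (Ahlfors--Bers) makes the normalized integrating maps holomorphic in $w$; since $f_w$ and $B_w$ are built from them by composition with fixed holomorphic maps, restriction, and an affine renormalization with holomorphically varying coefficients, $w\mapsto f_w$ and $w\mapsto B_w$ are holomorphic, and joint holomorphy of $(w,z)\mapsto f_w(z)$ and $(w,z)\mapsto B_w(z)$ follows from separate holomorphy by Hartogs' theorem. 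The delicate part --- where I expect to spend the real effort --- is the bookkeeping that makes $\mu_w$ genuinely holomorphic in $w$ while preserving the rigid normalizations $f_w(z)=c(w)+z^2+\OO(z^3)$ and $B_w(c(w))=w$: one must follow how the $w$-dependence propagates through the iterated pullbacks defining $\mu_w$ and choose the base points for the integrating coordinate and for the affine renormalization so that these normalizations are themselves holomorphic constraints. Once holomorphy in $w$ is in hand, the conclusion can alternatively be restated as: $w\mapsto[f_w]$ is a holomorphic embedding of $U_0'\sm K_{f_0}$ into the complex-analytic curve $\ZZ_{f_0}$ of Theorem~\ref{smooth_vertical_fibers}, realising its disconnectedness locus, and $f_w$, $B_w$ then vary holomorphically by Lyubich's analytic structure on $\QG$.
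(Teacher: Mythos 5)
Your proposal is mathematically sound, but you should know that the paper does not actually prove this theorem: its ``proof'' is a two--sentence citation to Lyubich \cite[Lemma 4.14]{Ly}, together with the observation that Lyubich's argument, formulated for external maps $g:\Sen\to\Sen$, uses only the existence of a complex analytic extension pre-exteriorly equivalent to $(f_0,\partial U_0)$ and hence ``carries over verbatim modulo notation.'' What you have written is, in effect, a reconstruction of that cited proof, and it follows the same architecture as Lyubich's: a quasiconformal surgery placing the critical value at the prescribed point $w$ and integrating the pulled-back invariant almost-complex structure; uniqueness from the affine rigidity of the normalization $c+z^2+\OO(z^3)$ once the exterior conjugacy is promoted to a M\"obius map (the Cantor Julia set is hyperbolic, hence of zero area and conformally removable, so this step is fine); and holomorphic dependence from the Ahlfors--Bers theorem with parameters applied to a Beltrami family depending holomorphically on $w$, plus Hartogs. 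So I would classify this as the same approach, written out rather than delegated. Two small points deserve attention if you were to complete the details: (i) when $w$ lies deep in $U_0\sm K_{f_0}$ rather than in the fundamental annulus $U_0'\sm\ov{U_0}$, replacing $\ga_0$ by a pullback quasi-circle changes the pre-exterior map, and you must then use the lifting remark following Definition~\ref{pre-exterior} to recover an equivalence with the original $(f_0,\partial U_0)$ rather than with its restriction; (ii) the normalizations $B_w(c(w))=w$ and $f_w(z)=c(w)+z^2+\OO(z^3)$ must be imposed as holomorphic constraints on the integrating maps, which is exactly the bookkeeping you flag as the delicate part --- this is handled in Lyubich's proof and is not an obstruction, but it is where the actual work lies.
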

\begin{proof}
This is proved in the proof of \cite[Lemma 4.14]{Ly}. The statement and proof is formulated in terms of the external class 
{\mapfromto {g_{f_0}}\Sen \Sen}, but the proof relies only on 
the fact that $g$ has a complex analytic extension, 
which is pre-exteriorly equivalent to the pre-exterior map of $f_0$. 
The statements here are slightly weaker than those 
of Lyubich (but the proof carries over verbatim modulo notation as described). 
\end{proof}

Here we consider the set
$$
\chZZ_{f_0} := \{ f : [f]\in\ZZ_{f_0}\text{ and $(f; \partial U_f)$ is pre-exteriorly equivalent to $(f_0, \partial U_0)$} \}.
$$
The proof in \cite{Ly} that $\ZZ_{f_0}$ is a Riemann 
surface is done via families of representatives, 
hence $\chZZ_{f_0}\subset\QM$ is also a Riemann surface 
isomorphic to $\{[f] : f\in \chZZ_{f_0}\}$. 
By \thmref{disconnectedmating} the latter
is a simply connected relatively open subset of $\ZZ_{f_0}$ 
compactly containing $\M_{f_0}$ and in particular a Riemann surface which compactly contains $\M_{f_0}$. 
Thus $\chZZ_{f_0}$ compactly contains the connectedness locus 
$\chM_{f_0} := \{ f\in\chZZ_{f_0} : [f]\in\M_{f_0}\}$
The following Theorem is the main technical theorem of this section.

\begin{teor}\label{Uniformly_qc_copies}
   Let {\mapfromto{f_i}{U_i}{U_i'}}, $i=0,1$ 
   be quadratic-like maps with $[f_i]\in\HH_0$, 
   quasi-circle bounded domains 
   and locally univalent extensions to neighbourhoods of 
   $\partial U_i$. 
   Suppose {\mapfromto{\phi} {\Chat\setminus U_0}{\Chat\setminus U_1}}
   is a pre-exterior q-c-equivalence.
   Then $\phi$ induces a quasi-conformal homeomorphism 
   between the parameter Riemann-surfaces 
   $$
   \mapfromto\Phi{\chZZ_{f_0}}{\chZZ_{f_1}}
   \qquad\text{with}\quad ||\mu_\Phi||_\infty \leq 
   ||\mu_\phi||_\infty.
   $$
 That is the holonomy of the connectedness locus extends 
 to a quasi-conformal homeomorphism $\Phi$, 
 whose dilatation is bounded by that of $\phi$.
\end{teor}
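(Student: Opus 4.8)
The plan is to move $\phi$ through the holomorphic parametrisations of the two parameter surfaces furnished by Theorem~\ref{disconnectedmating}, to recognise the induced map on the disconnected part as the restriction of the global hybrid conjugacy attached to $\phi$ by Corollary~\ref{extension_of_qc-pre-equivalence}, and on the connectedness locus as the holonomy; the dilatation estimate then comes for free. Applying Theorem~\ref{disconnectedmating} to $f_0$ yields a holomorphic family $w\mapsto(f_w,B_w)$, $w$ ranging over the parameter domain $\Omega_0$ of $(f_0,\partial U_0)$, with $B_w$ the biholomorphic pre-exterior equivalence of $(f_w,\partial U_{f_w})$ to $(f_0,\partial U_0)$, $c(w)$ the critical value of $f_w$, $B_w(c(w))=w$, and $w\mapsto[f_w]$ an isomorphism of $\Omega_0$ onto the disconnected part of $\chZZ_{f_0}$; likewise for $f_1$ one obtains $w'\mapsto(f^{(1)}_{w'},B^{(1)}_{w'})$. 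Let $\phi_0\colon\Chat\to\Chat$ be the global hybrid conjugacy of $f_0$ to $f_1$ produced by Corollary~\ref{extension_of_qc-pre-equivalence} applied to $\phi$: it restricts to $\phi$ off $U_0$, fixes $0$ and $\infty$, conjugates $f_0$ to $f_1$, and satisfies $||\mu_{\phi_0}||_\infty\le||\mu_\phi||_\infty$.

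Next, for an arbitrary $[f]\in\chZZ_{f_0}$ represented by a quadratic-like $f$ with biholomorphic pre-exterior equivalence $\beta_f$ of $(f,\partial U_f)$ to $(f_0,\partial U_0)$ (so $\beta_f=B_w$ when $[f]$ is disconnected, and the exterior comparison map when $[f]\in\chM_{f_0}$), the composite $\psi_f:=\phi\circ\beta_f$ is a pre-exterior q.c.\ equivalence of $(f,\partial U_f)$ to $(f_1,\partial U_1)$ with $||\mu_{\psi_f}||_\infty=||\mu_\phi||_\infty$, because $\beta_f$ is conformal. Spreading $\mu_{\psi_f}$ over the grand orbit of $\Chat\setminus K_f$ by the dynamics of $f$ and putting it equal to $0$ on $K_f$ produces an $f$-invariant Beltrami form $\nu_f$ with $||\nu_f||_\infty=||\mu_\phi||_\infty$; integrating it (normalised to fix $0$ and $\infty$ and rescaled linearly so the outcome lies in $\QM$) gives a q.c.\ map $h_f$ with $g_f:=h_f\circ f\circ h_f^{-1}$ quadratic-like. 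Since $\nu_f\equiv0$ on $K_f$, the map $h_f$ is a hybrid conjugacy $f\to g_f$; since $h_f$ and $\psi_f$ carry the same complex dilatation off $U_f$, the map $\psi_f\circ h_f^{-1}$ is a \emph{biholomorphic} pre-exterior equivalence of $(g_f,\partial U_{g_f})$ to $(f_1,\partial U_1)$, whence $[g_f]\in\ZZ_{f_1}$ and $g_f\in\chZZ_{f_1}$. Set $\Phi([f]):=[g_f]$.

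To identify $\Phi$: on the disconnected part the uniqueness clause of Theorem~\ref{disconnectedmating} for $f_1$ gives $g_w=f^{(1)}_{w'}$ and $\psi_w\circ h_w^{-1}=B^{(1)}_{w'}$ with $w'=(\psi_w\circ h_w^{-1})(c(g_w))$; as $h_w(0)=0$ we have $c(g_w)=h_w(c(w))$, so $w'=\psi_w(c(w))=\phi(B_w(c(w)))=\phi_0(w)$, reading $\phi$ as $\phi_0$ where $\phi$ is not defined. Hence in the coordinates of Theorem~\ref{disconnectedmating} the map $\Phi$ agrees on the disconnected part with $\phi_0|_{\Omega_0}$, so it is q.c.\ there with dilatation $\le||\mu_\phi||_\infty$. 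On $\chM_{f_0}$, $g_f$ is hybrid conjugate to $f$ via $h_f$, so $[g_f]$ straightens to the same quadratic polynomial as $[f]$; thus $\Phi|_{\chM_{f_0}}$ is the holonomy $\chM_{f_0}\to\chM_{f_1}$, which is holomorphic on each hyperbolic component (\cite{CG}, ch.~8.2). Running the whole construction with $\phi^{-1}$ and invoking uniqueness once more gives $\Phi^{-1}$, so $\Phi$ is a homeomorphism; it is continuous on $\chZZ_{f_0}$ because $\nu_f$ varies continuously with $[f]$. Being q.c.\ with dilatation $\le||\mu_\phi||_\infty$ off $\chM_{f_0}$ and holomorphic on the hyperbolic components of $\chM_{f_0}$, $\Phi$ is q.c.\ on all of $\chZZ_{f_0}$ with $||\mu_\Phi||_\infty\le||\mu_\phi||_\infty$, by the gluing principle used in the Remark of \S\,1 (following \cite{CG}, ch.~8.2) together with Lyubich's theorem that $\chM_{f_0}$ is quasi-conformally homeomorphic to $\M$ away from a neighbourhood of its root.

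The real work is concentrated in two places. First, the identification $\Phi=\phi_0|_{\Omega_0}$ in coordinates: it rests on the uniqueness statement of Theorem~\ref{disconnectedmating}, on careful bookkeeping of the normalisations ($h_f(0)=0$, the linear rescaling making $g_f\in\QM$, and the matching of $\psi_f\circ h_f^{-1}$ with $B^{(1)}_{w'}$), and on the fact — already used in the proof of Corollary~\ref{extension_of_qc-pre-equivalence} — that pre-exterior equivalences extend canonically by lifting compatibly with the passage $\phi\rightsquigarrow\phi_0$. Second, and this is the genuine obstacle, the promotion from "q.c.\ off $\chM_{f_0}$ and holomorphic on its hyperbolic components'' to "q.c.\ on $\chZZ_{f_0}$ with the same dilatation bound'': one cannot simply invoke removability of $\partial\M$, so one must glue the q.c.\ piece on the disconnected part to the holomorphic piece on the hyperbolic components across the \emph{real-analytic} boundaries of those components, controlling the remainder of $\partial\chM_{f_0}$ by a two-dimensional neighbourhood on which $\Phi$ is already known to be q.c.\ — which is precisely where Lyubich's quasi-conformality of the copy (off the root) enters, and which in the application of this theorem in \S\,4 is supplemented at the root by the uniformly quasi-conformal pre-exterior equivalences $\varphi\la$ built in \S\,2. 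Granting these two points, the estimate $||\mu_\Phi||_\infty\le||\mu_\phi||_\infty$ needs no further argument, and in particular no holomorphic-motion estimate is required.
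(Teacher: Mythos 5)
Your definition of $\Phi$ coincides with the paper's (hybrid holonomy on $\chM_{f_0}$, and $f_w\mapsto \wtf_{\phi_0(w)}$ on the disconnected part via Theorem~\ref{disconnectedmating}), and your observation that in the $w$-coordinates $\Phi$ restricted to $\chZZ_{f_0}\setminus\chM_{f_0}$ is literally $\phi_0$, hence q.c.\ there with dilatation $\leq\|\mu_\phi\|_\infty$, is correct. The gap is exactly at the point you flag as ``the genuine obstacle'' and then do not resolve: promoting the bound across $\chM_{f_0}$. Your proposed gluing cannot work as stated. The set $\partial\chM_{f_0}$ is a homeomorphic copy of $\partial\M$, which is not known to have zero area (this is a well-known open problem), so ``q.c.\ off $\chM_{f_0}$ plus holomorphic on the interior'' does not yield quasiconformality on all of $\chZZ_{f_0}$, let alone the sharp bound $\|\mu_\Phi\|_\infty\leq\|\mu_\phi\|_\infty$ on a full-measure set. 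Moreover the external results you invoke do not supply the missing estimate: Lyubich's quasiconformality of primitive copies concerns the straightening map $\chi_{\M'}$, not the holonomy between two vertical fibers, and gives no control by $\|\mu_\phi\|_\infty$ near $\chM_{f_0}$; appealing to the application in Section~4 is circular, since that application is a consequence of this theorem. Your closing claim that ``no holomorphic-motion estimate is required'' is precisely where the proposal diverges from what is needed.

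The paper's proof supplies the missing idea: for \emph{every} $f\in\chZZ_{f_0}$ (connected or not) it builds a Beltrami disk $\lambda\mapsto f^\lambda$, $\lambda\in\D(1/k)$ with $k=\|\sigma_0\|_\infty$, by integrating the complex-analytic family $\lambda\sigma_0$ pulled back by the (pre-)exterior equivalence $B_f$ and extended by $0$ on $K_f$; these graphs are disjoint and pass through $f$ at $\lambda=0$ and through $\Phi(f)$ at $\lambda=1$, so they constitute a holomorphic motion of all of $\chZZ_{f_0}$ over $\D(1/k)$. Uniformizing $\chZZ_{f_0}$ by $\D$, transplanting the induced Beltrami forms to $\Chat$, and integrating gives a holomorphic motion of $\Chat$ whose time-$1$ map has dilatation $\leq k$ (Schwarz lemma in the $\lambda$-variable), and $\Phi$ factors as a biholomorphism composed with this time-$1$ map. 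This yields the bound uniformly on $\chZZ_{f_0}$, including on $\chM_{f_0}$ and its boundary, with no removability or gluing argument. To repair your proof you would need to either reproduce this motion argument or find a genuine substitute for it; as written, the dilatation bound is established only on $\chZZ_{f_0}\setminus\chM_{f_0}$.
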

\begin{proof}
Towards a proof of \thmref{Uniformly_qc_copies}
we start by defining the map $\Phi$, 
which is a standard construction 
originally due to Douady and Hubbard, 
who calls it the miracle of continuity in degree $2$. 
The idea is also used in \cite[Lemma 4.22]{Ly}.

On $\chM_{f_0}$ the map is given by the hybrid holonomy. 
For $w\in U_0'\setminus K_{f_0}$ and $f_w\in\chZZ_{f_0}\setminus\chM_{f_0}$ given by 
\thmref{disconnectedmating} we proceed as follows. 
Let {\mapfromto{\phi} {W_0}{W_1}}, be a pre-exterior q-c-equivalence, where $W_i = \Chat\setminus \overline{U}_i$ for $i = 0, 1$. 
By \corref{extension_of_qc-pre-equivalence} the map 
{\mapfromto{\phi} {W_0}{W_1}} extends to a gobal 
q.c.~homeomorphism {\mapfromto{\phi_0} {\Chat}{\Chat}} fixing infinity
that restricts to a hybrid conjugacy 
{\mapfromto{\phi_0} {U_0'}{U_1'}} and to a q.c-conjugacy 
{\mapfromto{\phi_0} {\Chat\setminus K_{f_0}}{\Chat\setminus K_{f_1}}} 
between $f_0$ and $f_1$. 
We define $\Phi(f_w) = \wtf_{\phi(w)}$, 
where $\wtf_{\phi(w)}$ is the map in $\chZZ_{f_1}\setminus\chM_{f_1}$ given by \thmref{disconnectedmating}. 

In order to control the dilatation of $\Phi$ 
we firstly embed $\Phi$ in a holomorphic motion 
in the space of Quadratic-like maps of $\chZZ_{f_0}$ over $\D$. 
Secondly we rectify this motion to become biholomorphic to 
a holomorphic motion in $\Chat$.

We start with a holomorphic motion of the dynamical sphere for 
$f_0$. 
Let $\sigma_0$ be the Beltrami form given by $\phi_0$, i.e. 
the Beltrami form which in local charts is given by 
$\mu_0\frac{d\overline{z}}{dz}$, 
where $\mu_0 = \frac{d\phi_0/d\overline{z}}{d\phi_0/dz}$. 
Let $k := ||\sigma_0||_\infty < 1$ denote the supremum norm of 
$\sigma_0$. 

For $\lambda\in\D(1/k)$ define a complex analytic family of 
Beltrami forms $\sigma_0^\lambda := \lambda\sigma_0$, 
so that $\sigma_0^1=\sigma_0$. 
By the measureable Riemann mapping theorem with parameters there
exists a family {\mapfromto{\phi_0^\lambda}{\Chat}{\Chat}} 
of quasi-conformal homeomorphisms 
fixing $\infty$ and fixing $0$ with derivative $1$ 
and depending holomorphically on $\lambda\in\D$. 
Let {\mapfromto{f_0^\lambda}{U_0^\lambda}{U_0^{'\lambda}}} 
be the family of quadratic-like maps 
$f_0^\lambda := \phi_0^\lambda\circ f_0\circ(\phi_0^\lambda)^{-1} 
\in \HH_0$, 
where $U_0^\lambda = \phi_0^\lambda(U_0)$ and 
$U_0^{'\lambda} = \phi_0^\lambda(U_0')$. 
Then $\phi_0^1 = \phi_0$ and $f_0^1 = f_1$ 
due to the normalizations. 
We call the family $(f_0^\lambda)_{\lambda\in\D(1/k)}$ the 
Beltrami-disk from $f_0$ through $f_1$.
To alleviate notation we define for $\lambda\in\D(1/k)$ 
$$
\chZZ\la := \chZZ_{f_0^\lambda}.
$$
More generally for $f\in\chM_{f_0}\subset\chZZ_{f_0}$, 
{\mapfromto f {U_f} {U_f'}} we define the Beltrami-disk from $f$ 
through $f^1 = \Phi(f)$ as follows. 

By iterated lifting we can extend the pre-exterior equivalence to a biholomorphic map
{\mapfromto{B = B_f}{\Chat\setminus K_f}{\Chat\setminus K_{f_0}}} 
fixing $\infty$ and such that 
$B(U_f') = U_0'$ and $B\circ f = f_0\circ B$ on 
$U_f$. 
Let $\sigma_f^\lambda$ denote the complex analytic (in $\lambda$) family of $f$ invariant Beltrami forms on $\Chat$ given by 
$\sigma_f^\lambda = B_f^*(\sigma_0^\lambda)$ 
on $\Chat\setminus K_{f_0}$ and by $0$ on $K_f$. 
Let {\mapfromto{\phi_f^\lambda}{\Chat}{\Chat}} be the family 
of quasi-conformal homeomorphisms 
fixing both $\infty$ and $0$ and further normalized 
so that the family {\mapfromto{f^\lambda}{U_f^\lambda}{U_f^{'\lambda}}} of polynomial-like maps 
$f^\lambda := \phi_f^\lambda\circ f\circ(\phi_f^\lambda)^{-1}$, 
where $U_f^\lambda = \phi_f^\lambda(U_f)$ and 
$U_f^{'\lambda} = \phi_f^\lambda(U_f')$ is quadratic-like, 
i.e.~$f^\lambda(z) = c(f^\lambda) + z^2 + \OO(z^3)$. 
Then $\phi_f^\lambda$ and $f^\lambda$ depend holomorphically on $\lambda$ and $\phi_f(\lambda, z) = \phi_f^\lambda(z)$ is a holomorphic motion of the dynamical sphere for $f$. 

The map $f^\lambda\in\chZZ\la$, $\lambda\in\D(1/k)$ 
is the unique such map hybridly equivalent to $f$, 
in particular $f^1 = \Phi(f) \in\chZZ_{f_1}$. 
We note for later use that moreover 
{\mapfromto{B_{f^\lambda}}{\Chat\setminus K_{f^\lambda}}{\Chat\setminus K_{f_0^\lambda}}} satisfies 
$B_{f^\lambda} = \phi_0^\lambda\circ B_f\circ(\phi_f^\lambda)^{-1}$.

Finally for {\mapfromto f {U_f} {U_f'}},  $f\in\chZZ_{f_0}\setminus\chM_{f_0}=\chZZ_0\setminus\chM_0$ 
we define the Beltrami-disk from $f$ through $f^1 = \Phi(f)$ as follows. 
Recall that $f\in\chZZ_{f_0}$ if and only if 
the pre-exterior map $(f, \partial U_f)$ 
being pre-exteriorly equivalent to $(f_0, \partial U_0)$. 
Let {\mapfromto{B=B_f}{\Chat\setminus U_f}{\Chat\setminus U_0}} 
be the pre-exterior equivalence. 
Let $\sigma_f^\lambda$ be the unique complex analytic  
family of $f$-invariant Beltrami forms on $\Chat$, 
which equals $B_f^*(\sigma_0^\lambda)$ on $\Chat\setminus U_f$ 
and $0$ on the Cantor set $K_f$. 
As before let {\mapfromto{\phi_f^\lambda}{\Chat}{\Chat}} be the family 
of quasi-conformal homeomorphisms fixing both $\infty$ and $0$ 
and further normalized so that the family 
{\mapfromto{f^\lambda}{U_f^\lambda}{U_f^{'\lambda}}} of polynomial-like maps 
$f^\lambda := \phi_f^\lambda\circ f\circ(\phi_f^\lambda)^{-1} 
= c(f,\lambda) + z^2 + \OO(z^3)$, 
where $U_f^\lambda = \phi_f^\lambda(U_f)$ and 
$U_f^{'\lambda} = \phi_f^\lambda(U_f')$ is quadratic-like. 
Then $\phi_f^\lambda$ and $f^\lambda$ depend holomorphically on $\lambda$ 
and $\phi_f(\lambda, z) = \phi_f^\lambda(z)$ is a holomorphic motion 
of the dynamical sphere for $f$. 

The pre-exterior equivalence $B_f$ univalently extends by iterated lifting 
under the dynamics to a domain $W_f\simeq\D$ 
containing the critical value $c$ for $f$ and such that 
{\mapfromto{f_|}{\Chat\setminus{\overline{W}_f}}{f(\Chat\setminus{\overline{W}_f})}} 
is restriction equivalent to $f$. 
The univalent map {\mapfromto{B_{f^\lambda}}{\phi_f^\lambda(W_f)}{\phi_0^\lambda(B_f(W_f))}} is a pre-exterior equivalence between 
$(f^\lambda,\partial U_f^\lambda)$ and $(f_0^\lambda,\partial U_0^\lambda)$ with 
$B_{f^\lambda}(c(f^\lambda)) = \phi_0^\lambda(B_f(c(f)))$. 
Thus $f^\lambda\in\chZZ\la$, $\lambda\in\D(1/k)$ is the unique such map 
corresponding to $\phi_0^\lambda(B_f(c(f)))\in\Chat\setminus K_{f_0^\lambda}$ 
as given by \thmref{disconnectedmating}.
In particular $f^1 = \Phi(f) \in\chZZ_{f_1}$. 

Then by construction the graphs of the holomorphic functions 
$\D(1/k)\ni\lambda\mapsto f^\lambda$ are disjoint and 
hence together constitute af holomorphic motion 
{\mapfromto{\chHH}{\D(1/k)\times\chZZ_{f_0}}{\chZZ}}
of $\chZZ_{f_0}$ in 
$\chZZ := \cup_{\lambda\in\D(1/k)}\chZZ_{f_0^\lambda}$ 
over $\D(1/k)$ with base point $0$. 
Moreover $\Phi = \chHH^1 := \chHH(1,\cdot)$ 
so that the bound $k$ on the complex dilation of $\Phi$ 
would follow if this was a holomorphic motion in $\Chat$.

To complete the proof of \thmref{Uniformly_qc_copies} 
let {\mapfromto{\varphi}{\chZZ_{f_0}}{\D}} 
be a uniformization with $\varphi(f_0) = 0$. 

The holomorphic motion $\chHH$ equips the parameter space 
$\chZZ_{f_0}$ with a complex analytic family of Beltrami forms $\chsi^\lambda$, $\lambda\in\D(1/k)$. 

Define a complex analytic family $\whsi^\lambda$, 
$\lambda\in\D(1/k)$ of Beltrami forms 
on $\Chat$ by
$$
\whsi^\lambda := 
\begin{cases}
    \varphi_*(\chsi^\lambda) 
:= (\varphi^{-1})^*(\chsi^\lambda) & \text{on }\D,\\
0 & \Chat\sm\Dbar.
\end{cases}
$$
and let 
{\mapfromto{\whHH}{\D(1/k)\times\Chat}{\Chat}} 
be the holomorphic motion of $\Chat$ in $\Chat$ over $\D(1/k)$ 
with base point $0$ obtained by integrating the family of 
Beltrami forms $\whsi^\lambda$ and normalizng so that $0$, $1$ and $\infty$ are fixed under the motion. 
Then $\whHH^1(z) := \whHH(1,z)$ is quasi-conformal with complex dilatation bounded by $k$ and thus 
$$
{\mapfromto{\psi := \chHH^1\circ\varphi^{-1}\circ(\whHH^1)^{-1}(z))}{\whHH^1(\D)}{\chZZ_{f_1}}}
$$ 
is biholomorphc, 
since it preserves the standard almost complex structures. 
Hence also 
$$
\Phi = \psi\circ\whHH^1\circ\varphi
$$ has complex dilatation bounded by $k = ||\mu_\phi||_\infty$.
This completes the proof of \thmref{Uniformly_qc_copies}.
\end{proof}
\vspace{1cm}
 The following Corollary is an elementerary 
consequence of the \thmref{Uniformly_qc_copies} and \corref{from_pre_exterior_to_exterior} and is left to the reader.
\begin{cor}\label{fin}
   Let {\mapfromto{f_i}{U_i}{U_i'}}, $i=0,1$ 
   be quadratic-like maps with
   quasi-circle bounded domains 
   and locally univalent extensions to neighbourhoods of 
   $\partial U_i$. 
   Suppose {\mapfromto{\phi} {\Chat\setminus U_0}{\Chat\setminus U_1}}
   is a pre-exterior q-c-equivalence. 
   Let {\mapfromto{f_{0,i}}{U_{0,i}}{U_{0,i}'}}, $i=0,1$ 
   be quadratic-like maps in $\HH_0$ with 
   $(f_i, \partial U_i)$ pre-exteriorly equivalent to 
   $(f_{0,i}, \partial U_{0,i})$, $i = 0, 1$.
   Then $\phi$ induces a quasi-conformal homeomorphism 
   between the parameter Riemann-surfaces 
   $\chZZ_{f_{0,i}}\ni f_i$ :
   $$
   \mapfromto\Phi{\chZZ_{f_{0,0}}}{\chZZ_{f_{0,1}}}
   \qquad\text{with}\quad ||\mu_\Phi||_\infty \leq 
   ||\mu_\phi||_\infty.
   $$

\end{cor}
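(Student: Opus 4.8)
The plan is to deduce \corref{fin} from \thmref{Uniformly_qc_copies} by conjugating $\phi$ with the conformal pre-exterior equivalences that carry the pre-exterior classes of $(f_0,\partial U_0)$ and $(f_1,\partial U_1)$ onto those of the normalized maps $(f_{0,0},\partial U_{0,0})$ and $(f_{0,1},\partial U_{0,1})$.

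First I would fix the representatives. By \corref{from_pre_exterior_to_exterior} the normalized germ pre-exteriorly equivalent to a given quadratic pre-exterior map is unique (the normalization $a^{d-1}=1$ being trivial in degree $d=2$), so we may take $f_{0,i}$, $i=0,1$, to be the maps furnished there, with quasi-circle bounded domains $U_{0,i}$ and locally univalent extensions to neighbourhoods of $\partial U_{0,i}$; in particular, via \thmref{disconnectedmating}, the parameter Riemann surfaces $\chZZ_{f_{0,i}}$ are well defined and compactly contain $\chM_{f_{0,i}}$. Let $\beta_i\colon W(\partial U_i)\to W(\partial U_{0,i})$ denote the biholomorphic pre-exterior equivalences given by the hypothesis, so $\beta_i(\infty)=\infty$ and the homeomorphic boundary extension of $\beta_i$ satisfies $\beta_i\circ f_i=f_{0,i}\circ\beta_i$ on $\partial U_i$.

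Next I would set $\psi:=\beta_1\circ\phi\circ\beta_0^{-1}$. Here $\beta_0^{-1}$ maps $W(\partial U_{0,0})$ biholomorphically onto $W(\partial U_0)$, then $\phi$ maps this quasi-conformally onto $W(\partial U_1)$, and $\beta_1$ maps the latter biholomorphically onto $W(\partial U_{0,1})$, so $\psi\colon W(\partial U_{0,0})\to W(\partial U_{0,1})$ is a quasi-conformal homeomorphism fixing $\infty$; and on $\partial U_{0,0}$ its boundary extension conjugates $f_{0,0}$ to $f_{0,1}$, by chaining the boundary conjugacies of $\beta_0^{-1}$, $\phi$ and $\beta_1$. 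Thus $\psi$ is a pre-exterior q-c-equivalence between $(f_{0,0},\partial U_{0,0})$ and $(f_{0,1},\partial U_{0,1})$. Since $\beta_0$ and $\beta_1$ are conformal, pre- and post-composition by them leaves the complex dilatation unchanged up to the conformal changes of variables, so $\|\mu_\psi\|_\infty=\|\mu_\phi\|_\infty$.

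Finally I would apply \thmref{Uniformly_qc_copies} to the pair $(f_{0,0},f_{0,1})$ together with $\psi$: it produces a quasi-conformal homeomorphism $\Phi\colon\chZZ_{f_{0,0}}\to\chZZ_{f_{0,1}}$, coinciding with the hybrid holonomy on the connectedness loci, with $\|\mu_\Phi\|_\infty\le\|\mu_\psi\|_\infty=\|\mu_\phi\|_\infty$. To finish I would observe that $f_i\in\chZZ_{f_{0,i}}$: by hypothesis $(f_i,\partial U_i)$ is pre-exteriorly equivalent to $(f_{0,i},\partial U_{0,i})$, and pre-exterior equivalence forces equality of exterior classes (\corref{from_pre_exterior_to_exterior}), hence of Lyubich external classes, so $[f_i]\in\ZZ_{f_{0,i}}$; and $\Phi(f_0)=f_1$ follows directly from the construction of the holonomy in \thmref{Uniformly_qc_copies} applied to $\psi$. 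The only point requiring care is the bookkeeping of domains in the composition $\psi=\beta_1\circ\phi\circ\beta_0^{-1}$ and the verification that $\psi$ is again a pre-exterior equivalence rather than a mere topological conjugacy; everything else is formal.
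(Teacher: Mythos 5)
Your proposal is correct and is precisely the argument the paper intends: the authors leave this corollary to the reader as an ``elementary consequence'' of Theorem~\ref{Uniformly_qc_copies} and Corollary~\ref{from_pre_exterior_to_exterior}, and your reduction --- conjugating $\phi$ by the conformal pre-exterior equivalences $\beta_i$ to obtain a pre-exterior q.c.\ equivalence $\psi=\beta_1\circ\phi\circ\beta_0^{-1}$ between the normalized maps $f_{0,i}\in\HH_0$ with $\|\mu_\psi\|_\infty=\|\mu_\phi\|_\infty$, then applying Theorem~\ref{Uniformly_qc_copies} --- is exactly that intended route.
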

The following Theorem completes the proof of quasiconformal homeomorphicity between satellite copies of the Mandelbrot set with rotation numbers having the same denominator:
\begin{teor}\label{Harvest}
   Let {\mapfromto{f_{i}}{U_i}{U_i'}}, $i=\lambda,\nu$ 
   be families of quadratic-like maps, with connectedness loci $M_f$ and $M_g$ respectively. Assume that, for $\lambda_0 \in M_f$, $\phi_{\lambda_0}$ is a hybrid equivalence between $f_{\lambda_0}$ and $g_{\nu_0}$ with $||\mu_\phi||_\infty \leq k <1$. Then, if 
   $$ \xi: M_f \rightarrow M_g$$ is unbranched near $\lambda_0$, we have, for $r>0$ 
   $$\lim_{r \rightarrow 0}\,\,\,\,\, \mbox{ess}\sup_{|\lambda-\lambda_0|<r}||\mu_{\xi}(\lambda)|| \leq k + \epsilon,$$ with $\epsilon$ arbitrarily small.
\end{teor}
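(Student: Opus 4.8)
The plan is to realise $\xi$, on a neighbourhood of $\lambda_0$ in $\C$, as a composition $\xi=\Xi_g^{-1}\circ\Phi\circ\Xi_f$ of three quasiconformal maps, where $\Phi$ is the parameter homeomorphism produced by \thmref{Uniformly_qc_copies} and $\Xi_f,\Xi_g$ are charts whose complex dilatation vanishes at the root; submultiplicativity of dilatation then forces $\limsup_{\lambda\to\lambda_0}\|\mu_\xi(\lambda)\|\le k$, which gives the statement for every $\epsilon>0$. First I would pass from the hybrid equivalence $\phi_{\lambda_0}$ to a pre-exterior one: restricting $\phi_{\lambda_0}$ to the complement of a quasidisk $U_{\lambda_0}$ across whose boundary $\phi_{\lambda_0}$ still conjugates $f_{\lambda_0}$ to $g_{\nu_0}$ yields a pre-exterior q.c.-equivalence $\varphi\colon\Chat\setminus U_{\lambda_0}\to\Chat\setminus U_{\nu_0}$ with $\|\mu_\varphi\|_\infty\le\|\mu_{\phi_{\lambda_0}}\|_\infty\le k$. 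Let $\tilde f_0,\tilde g_0\in\HH_0$ be the normalised quadratic-like maps pre-exteriorly equivalent to $(f_{\lambda_0},\partial U_{\lambda_0})$ and $(g_{\nu_0},\partial U_{\nu_0})$ furnished by \corref{from_pre_exterior_to_exterior}. Then \corref{fin} (equivalently \thmref{Uniformly_qc_copies}) gives a quasiconformal homeomorphism $\Phi\colon\chZZ_{\tilde f_0}\to\chZZ_{\tilde g_0}$ of the parameter Riemann surfaces with $\|\mu_\Phi\|_\infty\le\|\mu_\varphi\|_\infty\le k$, which on the Mandelbrot copy $\chM_{\tilde f_0}$ is the hybrid holonomy (the holomorphic-motion argument internal to the proof of \thmref{Uniformly_qc_copies} is what produces this dilatation bound).

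Next I would bring everything down to parameter space. Since $\xi$ is unbranched near $\lambda_0$, the families $\{f_\lambda\}$ and $\{g_\nu\}$ are, near $\lambda_0$ and $\nu_0$, full unfolded quadratic-like families, so a planar neighbourhood of $\lambda_0$ can be parametrised onto a neighbourhood $N_f\subset\chZZ_{\tilde f_0}$ of the point of $\chM_{\tilde f_0}$ corresponding to $\lambda_0$: the connected parameters $M_f$ map to $\chM_{\tilde f_0}$ through the straightening, while the disconnected ones map through the critical-value coordinate of \thmref{disconnectedmating}. Call $\Xi_f$, $\Xi_g$ the resulting charts. Because the external class of $f_\lambda$ depends holomorphically (hence continuously) on $\lambda$ and coincides with that of $\tilde f_0$ at $\lambda_0$, the map $\Xi_f$, which precisely encodes that discrepancy of external class, is quasiconformal with $\|\mu_{\Xi_f}(\lambda)\|\to0$ as $\lambda\to\lambda_0$, and likewise for $\Xi_g$ at $\nu_0$. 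Since $\Phi|_{\chM_{\tilde f_0}}$ is the hybrid holonomy and $\xi$ preserves hybrid classes, $\Phi\circ\Xi_f=\Xi_g\circ\xi$ on $M_f$; hence $\Xi_g^{-1}\circ\Phi\circ\Xi_f$ is a quasiconformal map defined on a full planar neighbourhood of $\lambda_0$ agreeing with $\xi$ on $M_f$. Its pointwise dilatation is at most $K_{\Xi_g^{-1}}\cdot K_\Phi\cdot K_{\Xi_f}$, whose outer factors tend to $1$ and whose middle factor is $\le(1+k)/(1-k)$, so the complex dilatation of $\xi$ is at most $k+\epsilon$ on $\{|\lambda-\lambda_0|<r\}$ once $r$ is small enough; this is the asserted inequality.

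The hard part is the second step: checking that the straightening on the connectedness locus $M_f$ and the critical-value parametrisation of \thmref{disconnectedmating} on the disconnected side actually glue into one chart $\Xi_f$ modelling $\chZZ_{\tilde f_0}$ near $\chM_{\tilde f_0}$ on a planar domain, and that this chart is quasiconformal with dilatation vanishing at $\lambda_0$. This is where the near-parabolic geometry developed in Section~\ref{cycle} has to be controlled — the straightening itself is notoriously non-quasiconformal at the satellite root, so the whole point is that the remaining ingredient $\Phi$ carries the genuine dilatation while the chart $\Xi_f$ contributes only a vanishing amount — and it is where the hypothesis that $\xi$ is unbranched is used, namely to ensure $\Xi_f$ is an honest homeomorphism rather than a branched cover.
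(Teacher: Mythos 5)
Your decomposition $\xi=\Xi_g^{-1}\circ\Phi\circ\Xi_f$ is exactly how the paper's (one-line) proof is meant to be unpacked: the paper simply cites Corollary \ref{fin} together with the $\lambda$-Lemma of Lyubich's Appendix 2, and your middle factor $\Phi$ with $\|\mu_\Phi\|_\infty\le k$ is precisely the Corollary \ref{fin} ingredient. The soft spot is your justification of the outer factors. The claim that $\|\mu_{\Xi_f}(\lambda)\|\to 0$ as $\lambda\to\lambda_0$ does not follow from mere holomorphic dependence and continuity of the external class; it is itself the content of the $\lambda$-Lemma, applied to the hybrid holonomy between two transversals to the hybrid lamination (the unfolded family $\{f_\lambda\}$ and the vertical fiber $\chZZ_{\tilde f_0}$) which pass through the \emph{same} germ $[f_{\lambda_0}]$ --- the holonomy of a holomorphic motion is asymptotically conformal at a point fixed by it, with dilatation controlled by the hyperbolic/Teichm\"uller distance moved. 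Relatedly, your closing diagnosis that the hard part of controlling $\Xi_f$ lies in the near-parabolic geometry of Section \ref{cycle} is misplaced: that section lives entirely in dynamical space and is used to \emph{produce} the pre-exterior equivalence $\phi_{\lambda_0}$ (hence the bound $k$ fed into $\Phi$), whereas the asymptotic conformality of the parameter charts is a general holomorphic-motion fact independent of the parabolic analysis. With the $\lambda$-Lemma substituted in at that point, your argument is the paper's argument.
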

\begin{proof}
This is a consequence of the $\lambda$-Lemma in \cite{Ly} (Appendix 2, page 411) together with Corollary \ref{fin}.
\end{proof}

\end{document}